\documentclass[11pt]{amsart} % Anonymized submission
\usepackage{color,graphicx,amssymb,amsmath,amsfonts,amsthm,float}
\usepackage{hyperref}
\hypersetup{colorlinks}
% The following packages will be automatically loaded:
% amsmath, amssymb, natbib, graphicx, url, algorithm2e
\usepackage[titletoc,title]{appendix}
\setlength{\topmargin}{-0.3 in}
\setlength{\textheight}{23cm}
\setlength{\textwidth}{15.25cm}
\setlength{\oddsidemargin}{8pt}
\setlength{\evensidemargin}{8pt}

\parindent 1 pc
\title[Spectral thresholds in the bipartite stochastic block model]{Spectral thresholds in the bipartite stochastic block model}
\usepackage{times,comment}
\newcommand{\del}{\delta}

\newcommand{\E}{\mathbb E}

\newcommand{\var}{\text{Var}}
\newcommand{\eps}{\epsilon}
\newcommand{\lam}{\lambda}

\newcommand{\pois}{\text{Poisson}}
\newtheorem{question}{Question}
\newtheorem{thm}{Theorem}
\newtheorem{conj}{Conjecture}

\newtheorem{lemma}{Lemma}

 % Use \Name{Author Name} to specify the name.
 % If the surname contains spaces, enclose the surname
 % in braces, e.g. \Name{John {Smith Jones}} similarly
 % if the name has a "von" part, e.g \Name{Jane {de Winter}}.
 % If the first letter in the forenames is a diacritic
 % enclose the diacritic in braces, e.g. \Name{{\'E}louise Smith}

 % Two authors with the same address
  % \coltauthor{\Name{Author Name1} \Email{abc@sample.com}\and
  %  \Name{Author Name2} \Email{xyz@sample.com}\\
  %  \addr Address}

 % Three or more authors with the same address:
 % \coltauthor{\Name{Author Name1} \Email{an1@sample.com}\\
 %  \Name{Author Name2} \Email{an2@sample.com}\\
 %  \Name{Author Name3} \Email{an3@sample.com}\\
 %  \addr Address}

 % Authors with different addresses:
 \author{Laura Florescu} 
 \email{florescu@cims.nyu.edu}
 \address{New York University}
 
\author{Will Perkins} 
\email{william.perkins@gmail.com}
\address{University of Birmingham}

\begin{document}

\maketitle

\begin{abstract} 
We consider a bipartite stochastic block model on vertex sets $V_1$ and $V_2$, with planted partitions in each, and ask at what densities efficient algorithms can recover the partition of the smaller vertex set.

When $|V_2| \gg |V_1|$, multiple thresholds emerge.  We first locate a sharp threshold for detection of the partition, in the sense of the results of \cite{mossel2012stochastic,mossel2013proof} and  \cite{massoulie2014community} for the stochastic block model.  We then show that at a higher edge density, the singular vectors of the rectangular biadjacency matrix exhibit a localization / delocalization phase transition, giving recovery above the threshold and no recovery below.  Nevertheless, we propose a simple spectral algorithm,  Diagonal Deletion SVD, which recovers the partition at a nearly optimal edge density.   

The bipartite stochastic block model studied here was used by \cite{feldman2014algorithm} to give a unified algorithm for recovering planted partitions and assignments in random hypergraphs and random $k$-SAT formulae respectively.  Our results give the best known bounds for the clause density at which solutions can be found efficiently in these models as well as showing a barrier to further improvement via this reduction to the bipartite block model.
\end{abstract}
\footnote{Accepted for presentation at Conference on Learning Theory (COLT) 2016}

%\begin{keywords}
%Stochastic block model, spectral algorithms, random matrices, planted satisfiability
%\end{keywords}

\section{Introduction}

%\paragraph{\textbf{Motivation}}

The stochastic block model  is a widely studied model of community detection in random graphs, introduced by \cite{hll}. A simple description of the model is as follows: we start with $n$ vertices, divided into two or more communities, then add edges independently at random, with probabilities depending on which communities the endpoints belong to. The algorithmic task is then to infer the communities from the graph structure. 

A different class of models of random computational problems with planted solutions is that of planted satisfiability problems: we start with an assignment $\sigma$ to $n$ boolean variables and then choose clauses independently at random that are satisfied by $\sigma$. The task is to recover $\sigma$ given the random formula.  A closely related problem is that of recovering the planted assignment in \cite{goldreich2000candidate}'s one-way function, see Section~\ref{sec:goldreich}. 

A priori, the stochastic block model and planted satisfiability may seem only tangentially related. Nevertheless, two observations reveal a strong connection: 
\begin{enumerate}
\item Planted satisfiability can be viewed as a $k$-uniform hypergraph stochastic block model, with the set of $2n$ booleans literals partitioned into two communities of true and false literals under the planted assignment, and clauses represented as hyperedges.
\item \cite{feldman2014algorithm} gave a general algorithm for a unified model of planted satisfiability problems which reduces a random formula with a planted assignment to a bipartite stochastic block model with planted partitions in each of the two parts.  
\end{enumerate}

The bipartite stochastic block model in \cite{feldman2014algorithm} has the distinctive feature that the two sides of the bipartition are extremely unbalanced; in reducing from a planted $k$-satisfiability problem on $n$ variables, one side is of size $\Theta(n)$ while the other can be as large as $\Theta(n^{k-1})$.  

We study this bipartite block model in detail, first locating a sharp threshold for detection and then studying the performance of spectral algorithms.   

Our main contributions are the following:
\begin{enumerate}
\item When the ratio of the sizes of the two parts diverge, we locate a sharp threshold below which detection is impossible and above which an efficient algorithm succeeds (Theorems~\ref{thm:detection} and \ref{thm:noDetect}).  The proof of impossibility follows that of \cite{mossel2012stochastic} in the stochastic block model, with the change that we couple the graph to a broadcast model on a two-type Poisson Galton-Watson tree.  The algorithm we propose involves a reduction to the stochastic block model and the algorithms of \cite{massoulie2014community,mossel2013proof}. 
\item We next consider spectral algorithms and show that computing the singular value decomposition (SVD) of the biadjacency matrix $M$ of the model can succeed in recovering the planted partition even when the norm of the `signal', $\| \E M \|$, is much smaller than the norm of the `noise', $\| M - \E M \|$ (Theorem~\ref{thm:main}). 
\item We show that at a sparser density, the SVD fails due to a localization phenomenon in the singular vectors: almost all of the weight of the top singular vectors is concentrated on a vanishing fraction of coordinates (Theorem~\ref{thm:SVDfail}).
\item We propose a modification of the SVD algorithm, Diagonal Deletion SVD, that succeeds at a sparser density still, far below the failure of the SVD (Theorem~\ref{thm:main}).
\item We apply the first algorithm to planted hypergraph partition and planted satisfiability problems to find the best known general bounds on the density at which the planted partition or assignment can be recovered efficiently (Theorem~\ref{thm:ksat}).  
\end{enumerate}

\section{The model and main results}

\paragraph{The bipartite stochastic block model}
Fix parameters $\del \in [0,2]$, $n_1\le n_2$, and $p \in [0,1/2]$. Then we define the bipartite stochastic block model as follows: 
\begin{itemize}
\item Take two vertex sets $V_1, V_2$, with $|V_1| = n_1$, $|V_2|=n_2$.
\item Assign labels `+' and `-' independently with probability $1/2$ to each vertex in $V_1$ and $V_2$.  Let $\sigma \in \{ \pm 1 \}^{n_1}$ denote the labels of the vertices in $V_1$ and $\tau \in \{ \pm 1 \}^{n_2}$ denote the labels of $V_2$. 
\item Add edges independently at random between $V_1$ and $V_2$ as follows: 
for $u \in V_1, v \in V_2$ with $\sigma(u) = \tau (v)$, add the edge $(u,v)$ with probability $\del p$; for $\sigma(u) \ne \tau(v)$, add $(u,v)$ with probability $(2-\del)p$.  
\end{itemize}
\noindent
\textbf{Algorithmic task:} Determine the labels of the vertices given the bipartite graph, and do so with an efficient algorithm at the smallest possible edge density $p$.

\begin{figure}
\centering
\includegraphics[scale=0.3]{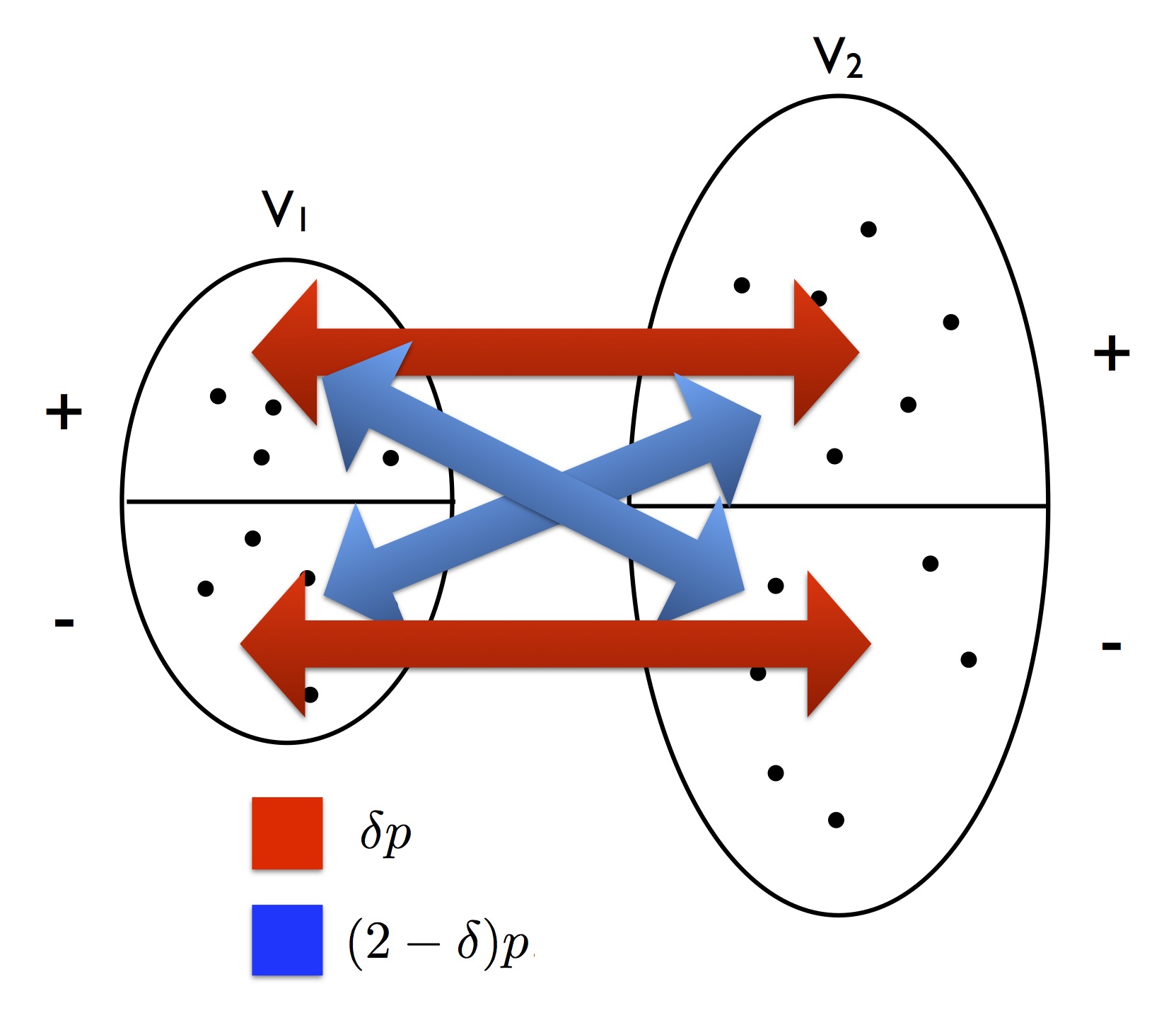}
\caption{Bipartite stochastic block model on $V_1$ and $V_2$. Red edges are added with probability $\del p$ and blue edges are added with probability $(2-\del)p$.}
\end{figure}

\paragraph{Preliminaries and assumptions}

In the application to planted satisfiability, it suffices to recover $\sigma$, the partition of the smaller vertex set, $V_1$, and so we focus on that task here; we will accomplish that task even when the number of edges is much smaller than the size of $V_2$.  For a planted $k$-SAT problem or $k$-uniform hypergraph partitioning problem on $n$ variables or vertices, the reduction gives vertex sets of size $n_1 = \Theta(n), n_2 = \Theta(n^{k-1})$, and so the relevant cases are extremely unbalanced.

We will say that an algorithm \textit{detects} the partition if for some fixed $\eps>0$, independent of $n_1$, whp it returns an $\eps$-correlated partition, i.e. a partition that agrees with $\sigma$ on a ($1/2 + \eps$)-fraction of vertices in $V_1$ (again, up to the sign of $\sigma$).

We will say an algorithm \textit{recovers} the partition of $V_1$ if whp the algorithm returns a partition that agrees with $\sigma$ on $1- o(1)$ fraction of vertices in $V_1$. Note that agreement is up to sign as $\sigma $ and $-\sigma$ give the same partition.

\subsection{Optimal algorithms for detection}

On the basis of heuristic analysis of the belief propagation algorithm, \cite{decelle2011asymptotic} made the striking conjecture that in the two part stochastic block model, with interior edge probability $a/n$, crossing edge probability $b/n$, there is a \textit{sharp threshold} for detection: for $(a-b)^2 > 2(a+b)$ detection can be achieved with an efficient algorithm, while for $(a-b)^2 \le 2(a+b)$, detection is impossible for any algorithm.  This conjecture was proved by \cite{mossel2012stochastic, mossel2013proof} and \cite{massoulie2014community}.

Our first result is an analogous sharp threshold for detection in the bipartite stochastic block model at $p = (\del-1)^{-2} (n_1 n_2)^{-1/2}$, with an algorithm based on a reduction to the SBM, and a lower bound based on a connection with the non-reconstruction of a broadcast process on a tree associated to a two-type Galton Watson branching process (analogous to the proof for the SBM \cite{mossel2012stochastic} which used a single-type Galton Watson process).

\begin{figure}[h!]
\begin{center}
\fbox{
\parbox{\textwidth}{
%\medskip
{\bf Algorithm: SBM Reduction.}
\begin{enumerate}
\item Construct a graph $G^\prime$ on the vertex set $V_1$ by joining $u$ and $w$ if they are both connected to the same vertex $v \in V_2$ and $v$ has degree exactly $2$.
\item  Randomly sparsify the graph (as detailed in Section \ref{sec:detection}).
\item  Apply an optimal algorithm for detection in the SBM from \cite{massoulie2014community, mossel2013proof, bordenave2015non} to partition $V_1$.
\end{enumerate}
}
}
\end{center}
\end{figure}

  %The next two theorems locate a sharp threshold for detection, when $n_2$ is sufficiently larger than $n_1$.

\begin{thm}
\label{thm:detection}
Let  $\delta  \in [0,2] \setminus \{1\} $ be fixed and $n_2 = \omega( n_1)$.  Then there is a polynomial-time algorithm that detects the partition $V_1 = A_1 \cup B_1$ whp if 
\[ p >  \frac{1 + \eps}{(\del-1)^2 \sqrt{n_1 n_2} } \]
for any fixed $\eps > 0$. 
\end{thm}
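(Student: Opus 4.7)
The plan is to show that the graph $G'$ from step 1 of the algorithm, after the sparsification of step 2, is close in total variation to a two-community stochastic block model on $V_1$ whose parameters lie strictly above the Kesten-Stigum detection threshold by a factor $(1+\eps)^2$; the detection algorithm of \cite{massoulie2014community, mossel2013proof, bordenave2015non} applied to this graph then produces a partition $\eps'$-correlated with $\sigma$ whp. First I would compute the conditional edge rate in $G'$ given the labels $\sigma, \tau$. Each $v \in V_2$ independently contributes an edge iff $\deg(v) = 2$, and different $v$'s contribute independently given $\sigma, \tau$. For a pair $u_1, u_2 \in V_1$, the probability that $v$ witnesses $\{u_1, u_2\}$ equals $p_{u_1 v}\, p_{u_2 v}$ times a ``no other neighbors at $v$'' factor, where $p_{uv} \in \{\del p, (2-\del)p\}$ is determined by the labels. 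In the regime of interest $n_1 p = O(\sqrt{n_1/n_2}) = o(1)$, so this last factor is $1 - o(1)$. Averaging over the uniform label $\tau(v) \in \{\pm 1\}$ gives per-$v$ rates
\begin{align*}
r_+ &= \tfrac{1}{2}\bigl(\del^2 + (2-\del)^2\bigr) p^2 \quad \text{if } \sigma(u_1) = \sigma(u_2), \\
r_- &= \del(2-\del) p^2 \quad \text{if } \sigma(u_1) \neq \sigma(u_2).
\end{align*}
Setting $a = n_1 n_2 r_+$ and $b = n_1 n_2 r_-$, so that the per-pair probability of at least one witness is $(a/n_1)(1-o(1))$ and $(b/n_1)(1-o(1))$, a short calculation gives $a - b = 2 n_1 n_2 p^2 (\del-1)^2$ and $a + b = 2 n_1 n_2 p^2$. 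Hence $(a-b)^2/[2(a+b)] = n_1 n_2 p^2 (\del-1)^4$, and the Kesten-Stigum condition $(a-b)^2 > 2(a+b)$ becomes precisely $p > (\del-1)^{-2} (n_1 n_2)^{-1/2}$, matching the stated threshold with an extra factor $(1+\eps)^2$ of slack.

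The next step is to couple $G'$ (or its sparsified version) to a genuine SBM on $V_1$ with within-community edge probability $a/n_1$ and between-community probability $b/n_1$. Contributions of distinct $v \in V_2$ are independent given the labels, so edges of $G'$ between disjoint pairs of $V_1$ are independent; edges sharing a common $V_1$-endpoint are only weakly coupled because the degree-$2$ constraint forces each $v$ to witness at most one pair. A second-moment computation should show that the per-pair edge probabilities of $G'$ agree with those of the SBM up to $1+o(1)$ factors, and the random sparsification of step 2 can be chosen to absorb the remaining small dependencies and degree irregularities while shrinking $a, b$ by only a $1 + o(1)$ factor. Once $G'$ is coupled to an SBM above Kesten-Stigum, the cited detection algorithms --- which operate in the bounded expected degree regime and are robust to $o(1)$ perturbations --- return the desired $\eps'$-correlated partition.

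The main obstacle will be making the coupling argument fully rigorous: one must control both the negative correlations between edges of $G'$ that share a common $V_2$-witness and the effect of atypical high-degree vertices of $V_2$, while confirming that the sparsification in step 2 yields a graph whose distribution is within $o(1)$ total variation of the target SBM on the parameters required by \cite{massoulie2014community, mossel2013proof, bordenave2015non}. Given this reduction, the detection claim follows directly from the cited results.
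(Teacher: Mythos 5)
Your reduction and your computations agree with the paper's: both arguments pass to the graph on $V_1$ generated by the degree-$2$ vertices of $V_2$, compute the per-witness rates $r_{\pm}$, and verify that the Kesten--Stigum condition $(a-b)^2>2(a+b)$ becomes exactly $p>(\del-1)^{-2}(n_1n_2)^{-1/2}$. The genuine gap is the step you explicitly defer: showing that the graph handed to the algorithms of \cite{massoulie2014community, mossel2013proof, bordenave2015non} actually has the law of a stochastic block model (or is within $o(1)$ total variation of one). That is the heart of the paper's proof, and your reading of step 2 of the algorithm misses how it is resolved. The sparsification is not a $(1+o(1))$ clean-up that ``absorbs dependencies'': one draws $N\sim\pois\bigl((1+\eps)(\del-1)^{-4}n_1/2\bigr)$, a constant-factor thinning of $\mathcal E$, precisely because, conditioned on $\sigma$ (and on the typical bias $\beta_1=o(n_1^{-1/3})$), the edges of $\mathcal E$ are i.i.d.\ with per-pair probabilities $p_a,p_b$; sampling a Poisson number of edges uniformly from an i.i.d.\ collection produces, by Poisson thinning, independent $\pois(p_a\E N)$ and $\pois(p_b\E N)$ multiplicities on the pairs, and collapsing multiplicities gives an \emph{exact} SBM with $a/n_1=\Pr[\pois(p_a\E N)\ge1]$ and $b/n_1=\Pr[\pois(p_b\E N)\ge1]$, still above threshold by a factor $(1+\eps)$. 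No robustness of the downstream algorithms to perturbed inputs is then required.

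If you insist on working with the unsparsified $G'$, two claims in your sketch need repair. First, edges of $G'$ on disjoint pairs are \emph{not} independent given $\sigma$: each degree-$2$ witness occupies exactly one pair and the number of witnesses is binomial, so the occupancy indicators are globally (negatively) dependent; this is exactly the dependence that Poissonizing the edge count removes. Second, the per-pair weights depend on the realized bias $\beta_1$ of $\sigma$, so one must condition on $\beta_1$ being small (this is also where $n_2=\omega(n_1)$, i.e.\ $n_1p=o(1)$, enters). Your total-variation route can be completed --- the witness count is $\operatorname{Bin}(n_2,q)$ with $q=\Theta(n_1/n_2)=o(1)$, hence within $o(1)$ total variation of a Poisson, and conditioned on the count both models place the edges i.i.d.\ with the same weights --- but doing so is essentially re-deriving the Poissonization that the algorithm's subsampling performs for you. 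As written, the proposal stops short of this argument, so the appeal to the SBM detection results is not yet justified.
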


\begin{thm}
\label{thm:noDetect} 
 On the other hand, if $n_2 \ge n_1$ and  
 \[ p \le  \frac{1}{(\del-1)^2 \sqrt{n_1 n_2} }, \]
 then no algorithm can detect the partition whp.
 \end{thm}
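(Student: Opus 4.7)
The plan is to adapt the impossibility argument of Mossel, Neeman and Sly \cite{mossel2012stochastic} for the two-community SBM by coupling the neighborhood of a typical vertex $v \in V_1$ to a \emph{two-type} Poisson Galton-Watson tree and invoking a Kesten-Stigum non-reconstruction bound on that tree. The three steps are: (i) couple the ball $B_{2R}(v)$ of radius $2R$ in the bipartite graph to a two-type GW tree $T_{2R}$ with $o(1)$ total variation error; (ii) reduce detection to estimating $\sigma(v)$ in $L^2$ from the boundary signs $\sigma|_{\partial B_{2R}(v)}$ plus the graph structure; (iii) contradict this with non-reconstruction on the tree.

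For (i), explore the neighborhood of $v$ breadth-first, alternating $V_1 \to V_2 \to V_1 \to \cdots$. Since $p = O(1/\sqrt{n_1 n_2})$, the per-step offspring means satisfy $\max\{n_1 p, n_2 p\} \le \sqrt{n_2/n_1}$, so choosing $R$ so that the total explored tree has size $n_1^{o(1)}$ avoids collisions whp and makes the Poisson approximation tight. Conditional on sign labels, each child of a $V_1$-vertex of sign $s$ is independently a same-sign $V_2$-vertex at rate $n_2 \del p/2$ and an opposite-sign $V_2$-vertex at rate $n_2 (2-\del)p/2$, and analogously at $V_2$-vertices with $n_2 \leftrightarrow n_1$. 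This is exactly a broadcast process on a two-type GW tree with symmetric binary channel of second eigenvalue $\lam = \del - 1$.

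A full $V_1 \to V_2 \to V_1$ generation then has mean grandchildren $d := n_1 n_2 p^2$ and composed channel second eigenvalue $\lam^2 = (\del-1)^2$, so a standard second-moment Kesten-Stigum computation (exactly as in \cite{mossel2012stochastic} for the single-type case) shows non-reconstruction of the root sign from depth-$2R$ signs whenever $d \cdot (\lam^2)^2 \le 1$, i.e.
\[
n_1 n_2 p^2 (\del-1)^4 \le 1 \quad \Longleftrightarrow \quad p \le \frac{1}{(\del-1)^2 \sqrt{n_1 n_2}},
\]
which is precisely the hypothesis of the theorem.

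The main technical obstacle is step (ii): turning a hypothetical detection algorithm into an informative $L^2$ predictor of $\sigma(v)$ from $\sigma|_{\partial B_{2R}(v)}$. This is the conceptual core of \cite{mossel2012stochastic} and proceeds via a symmetry / exchangeability argument, conditioning on $\sigma$ outside $B_R(v)$ and coupling those outer signs to an essentially independent sample of the broadcast process on the tree. In the bipartite case one must bookkeep that $\partial B_{2R}(v)$ contains vertices of both types, with only the $V_1$-signs contributing to the detection guarantee; however, since the channel is symmetric and the $L^2$ recursion is driven by the single scalar $d\lam^4$, the single-type argument carries over with essentially only notational changes. If anything, the unbalanced regime $n_2 \gg n_1$ helps the coupling, since even when $n_2 p$ is large the return step has mean $n_1 p \ll 1$, keeping the explored neighborhood sparse and tree-like.
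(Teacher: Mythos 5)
Your high-level plan is the same as the paper's (couple the neighborhood of a vertex of $V_1$ to a two-type Poisson Galton--Watson broadcast process, reduce detection to estimating $\sigma(\rho)$ from the boundary labels, and invoke tree non-reconstruction), and your bookkeeping of the threshold $n_1n_2p^2(\del-1)^4\le 1$ is correct. But there are genuine gaps exactly where the paper has to do new work. First, in the regime $n_2=\omega(n_1)$ the per-step offspring mean $n_2p\asymp\sqrt{n_2/n_1}$ diverges, so your tree is $n$-dependent and your size estimate is simply false: already the first generation contains about $\sqrt{n_2/n_1}$ vertices of $V_2$, which is polynomial in $n_1$ (indeed $\ge\sqrt{n_1}$ once $n_2\ge n_1^2$), not $n_1^{o(1)}$. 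With an $n$-dependent offspring law you cannot quote the tree non-reconstruction theorem off the shelf; and note that non-reconstruction below the Kesten--Stigum bound is \emph{not} "a standard second-moment computation" (second moments give the positive, reconstruction direction) --- it is the theorem of \cite{evans2000broadcasting} (with \cite{pemantle2010critical} at criticality), which requires a fixed limiting tree. The paper resolves both issues at once by passing to the subgraph $\hat G$ induced by $V_1$ and the degree-$\ge 2$ vertices of $V_2$, which couples to a \emph{fixed} tree (type-$1$ nodes have $\pois(d^2)$ type-$2$ children, each with exactly one type-$1$ child, $d=p\sqrt{n_1n_2}$) of branching number $d$, to which \cite{evans2000broadcasting} applies directly.

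Second, your step (ii) is where you claim "essentially only notational changes," but this is precisely the step the paper identifies as the main new difficulty. Once you prune to $\hat G$ (as you must, for the reason above), the degree-$0$ and degree-$1$ vertices of $V_2$ become part of "the rest of the graph," and you must prove they carry asymptotically no information about $\sigma(\rho)$; this is the analogue of Lemma 4.7 of \cite{mossel2012stochastic} plus an additional lemma (the paper's Lemma~\ref{lem:degree01} and Lemma~\ref{lem:qm}, proved via concentration of the bias $\beta_1$ and careful ratio estimates of conditional probabilities) that has no counterpart in the single-type argument. If instead you keep the unpruned neighborhood, the same difficulty reappears in a different guise: the separator/conditional-independence computation must now handle polynomially many dangling $V_2$ vertices inside the ball (violating the $o(\sqrt{n_1})$-size hypothesis under which the Mossel--Neeman--Sly manipulations are carried out), and the tree step still faces the $n$-dependent offspring law. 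So the claim that the unbalanced regime "if anything helps" is backwards: it is exactly what forces the additional argument, and as written your proposal does not supply it.
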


Note that for $p \le \frac{1}{\sqrt{n_1 n_2}}$ it is clear that detection is impossible: whp there is no giant component in the graph.  The content of Theorem~\ref{thm:noDetect} is finding the sharp dependence on $\del$.

\subsection{Spectral algorithms}
One common approach to graph partitioning is spectral: compute eigenvectors or singular vectors of an appropriate matrix and round the vector(s) to partition the vertex set.  In our setting, we can take the $n_1 \times n_2$ rectangular biadjacency matrix $M$, with rows and columns indexed by the vertices of $V_1$ and $V_2$ respectively, with a $1$ in the entry $(u,v)$ if the edge $(u,v)$ is present, and a $0$ otherwise. The matrix $M$ has independent entries that are $1$ with probability $\del p$ or $(2 - \del) p$ depending on the label of $u$ and $v$ and $0$ otherwise.

\begin{figure}[h!]
\begin{center}
\fbox{
\parbox{\textwidth}{
%\medskip
{\bf Algorithm:  Singular Value Decomposition.}
\begin{enumerate}
\item Compute the left singular vector of $ M$ corresponding to the second largest singular value.
\item  Round the singular vector to a vector $z \in \{ \pm 1\}^{n_1}$ by taking the sign of each entry. 
\end{enumerate}
}
}
\end{center}
\end{figure}

%\paragraph{ \textbf{SVD Algorithm}}

% Note that the expectation matrix $\E   M $ is a rank two matrix given by 
%  \[ \E   M =\lam_1 (\vec 1/\sqrt{n_1}) (\vec 1/\sqrt{n_2})^T +  \lam_2  (\sigma/\sqrt{n_1}) (\tau/\sqrt{n_2})^T, \]
%   where $\lam_1 =p \sqrt{n_1 n_2}$, $\lam_2 = (\del -1) p \sqrt{n_1 n_2}$, and so finding the second left singular vector of the expectation matrix would give $\sigma$ and thus the partition of $V_1$ into $A_+$ and $A_-$. However, we are given $M = \E M + (M - \E M)$, the sum of $\E M$ with the mean zero `noise' matrix $(M - \E M)$.  
   
A typical analysis of spectral algorithms requires that the second largest eigenvalue or singular value of  the expectation matrix $\E M$ is much larger than the spectral norm of the noise matrix, $( M - \E M)$.  But here we have $\| M - \E M \| = \tilde \Theta ( \sqrt{ n_2 p}  )$, which is in fact much larger than $\lam_2 ( \E M) =  \Theta ( p \sqrt{n_1 n_2} )$ when $p = o ( n_1^{-1})$.  Does this doom the spectral approach at lower densities?

\begin{question}
\label{q:main}
For what values of $p = p(n_1, n_2)$ is the singular value decomposition (SVD) of $ M$ correlated with the vector $\sigma$ indicating the partition of $V_1$? 
\end{question}

In particular, this question was asked by \cite{feldman2014algorithm}. We show that there are two thresholds, both well below $p = n_1^{-1}$: at $p = \tilde \Omega ( n_1^{-2/3} n_2^{-1/3})$ the second singular vector of $ M$ is correlated with the partition of $V_1$, but below this density, it is uncorrelated with the partition, and in fact localized. Nevertheless, we give a simple spectral algorithm based on modifications of $M$ that matches the bound $p = \tilde O( (n_1 n_2)^{-1/2})$ achieved with subsampling by \cite{feldman2014algorithm}.  In the case of very unbalanced sizes, in particular in the applications noted above, these thresholds can differ by a polynomial factor in $n_1$.  % Note that if $p \le \frac{1} {\sqrt{n_1 n_2}}$, then detection is impossible since the graph will consist of small, disconnected components.

\begin{figure}[h!]
\begin{center}
\fbox{
\parbox{\textwidth}{
%\medskip
{\bf Algorithm: Diagonal Deletion SVD.}
\begin{enumerate}
\item Let $  B = M M^T - \text{diag}(MM^T)$ (set the diagonal entries of $MM^T$ to $0$).
\item  Compute the second eigenvector of $ B $.
\item  Round the  eigenvector to a vector $z \in \{ \pm 1\}^{n_1}$ by taking the sign of each entry.
\end{enumerate}
}
}
\end{center}
\end{figure}

%\paragraph{ \textbf{Diagonal deletion SVD}}

%Let $ \hat B = M M^T - \text{diag}(MM^T)$ (set the diagonal entries of $MM^T$ to $0$). Then compute the second eigenvector of $\hat B $ and round by signs to get $z$.  

Our results locate two different thresholds for spectral algorithms for the bipartite block model: while the usual SVD is only effective with $p = \tilde \Omega( n_1 ^{-2/3} n_2^{-1/3})$, the modified diagonal deletion algorithm is effective already at $p = \tilde \Omega( n_1 ^{-1/2} n_2^{-1/2})$, which is optimal up to logarithmic factors. In particular, when $n_1 =n, n_2=n^{k-1}$ for some $k \ge 3$, as in the application above, these thresholds are separated by a polynomial factor in $n$.

\begin{figure}[h!]
\centering
\includegraphics[scale=0.45]{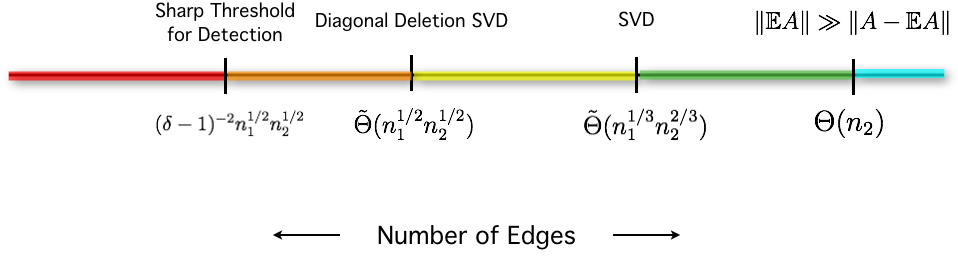}
\caption{Main theorems illustrated.}
\end{figure}

First we give positive results for recovery using the two spectral algorithms.

\begin{thm}
\label{thm:main}
Let $n_2 \ge n_1 \log^4 n_1$, with $n_1 \to \infty$. Let $\del \in [0,2] \setminus \{1\}$ be fixed with respect to $n_1,n_2$. Then there exists a universal constant $C>0$ so that
\begin{enumerate} 
\item If $p = C (n_1 n_2)^{-1/2} \log n_1$, then whp the diagonal deletion SVD algorithm recovers the partition $V_1 =  A_1 \cup B_1$.  
\item If $p = C n_1^{-2/3} n_2^{-1/3} \log n_1$, then whp the unmodified SVD algorithm recovers the partition.  
\end{enumerate}

\end{thm}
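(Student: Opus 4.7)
Both parts reduce to a perturbation analysis of the symmetric $n_1 \times n_1$ matrix $MM^T$, whose eigenvectors are the left singular vectors of $M$: part~(2) uses $MM^T$ directly, while part~(1) uses $B = MM^T - \mathrm{diag}(MM^T)$.  A direct computation gives, for $i \neq j$,
\[
\E[(MM^T)_{ij}] = n_2 p^2 + (\del - 1)^2\, n_2 p^2\, \sigma_i \sigma_j,
\qquad \E[(MM^T)_{ii}] = n_2 p,
\]
so the off-diagonal of $\E[MM^T]$ carries a rank-one signal $(\del-1)^2 n_2 p^2\, \sigma\sigma^T$ with non-trivial eigenvalue $\lam^\star := (\del-1)^2 n_1 n_2 p^2$ and eigenvector $\sigma/\sqrt{n_1}$.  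The diagonal contributes only a harmless $n_2 p \cdot I$ shift to $\E[MM^T]$; but in $MM^T$ itself the diagonal $\sum_k M_{ik}$ has large fluctuations, which is exactly why deleting the diagonal lets part~(1) improve on part~(2).

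We then split $M = \bar M + N$ with $\bar M = \E M$ (rank $2$) and decompose
\[
MM^T - \E[MM^T] \;=\; \bar M N^T + N \bar M^T + \bigl(NN^T - \E[NN^T]\bigr).
\]
For the cross terms, the rank-$2$ structure $\bar M = p\, \mathbf{1}_{V_1}\mathbf{1}_{V_2}^T + p(\del-1)\, \sigma\tau^T$ lets us write $\bar M N^T$ as a rank-$2$ matrix whose norm is controlled by $\|N\mathbf{1}_{V_2}\|$ and $\|N\tau\|$; scalar Bernstein estimates (each norm concentrates near $\sqrt{n_1 n_2 p}$) give $\|\bar M N^T\| = \tilde O(p^{3/2} n_1 \sqrt{n_2})$, which is only polylogarithmic at either threshold in the statement.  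The core work is the bound on $NN^T - \E[NN^T]$: we separate its diagonal---the vector of centered row-degrees of $M$, whose max absolute value is $O(\sqrt{n_2 p \log n_1})$ by Bernstein plus a union bound over $n_1$ rows---from its off-diagonal, which we view as $\sum_k (N_{*k} N_{*k}^T - \mathrm{diag}(N_{*k}N_{*k}^T))$, a sum of $n_2$ independent mean-zero rank-one matrices, and bound by matrix Bernstein to obtain an operator-norm bound of $\tilde O(p \sqrt{n_1 n_2})$.

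Comparing signal $\lam^\star$ to noise at each threshold then finishes the argument.  At $p = C (n_1 n_2)^{-1/2} \log n_1$ with diagonal deletion the diagonal noise is removed, the off-diagonal noise is $\tilde O(\log n_1)$, and $\lam^\star \sim (\del-1)^2 \log^2 n_1$, so signal beats noise by a factor of $\log^{1/2} n_1$.  At $p = C n_1^{-2/3} n_2^{-1/3} \log n_1$ without deletion the diagonal noise $\sim n_1^{-1/3} n_2^{1/3} \log n_1$ is the bottleneck, still a factor of $\log n_1$ below $\lam^\star \sim (\del-1)^2 n_1^{-1/3} n_2^{1/3} \log^2 n_1$; the hypothesis $n_2 \ge n_1 \log^4 n_1$ is what ensures that both the cross-term and the off-diagonal contributions are smaller than this diagonal one.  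Applying Davis--Kahan to the second eigenvector of $\E[MM^T]$ (isolated from the rest of the spectrum by a gap of order $\lam^\star$, using that $\sigma \perp \mathbf{1}$ whp) then yields $\|\hat v - \sigma/\sqrt{n_1}\|_2 = o(1)$ up to sign; since any sign flip contributes at least $1/\sqrt{n_1}$ to this $\ell_2$ distance, the number of misclassified vertices is at most $n_1\|\hat v - \sigma/\sqrt{n_1}\|_2^2 = o(n_1)$, which is recovery.

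The main technical obstacle we anticipate is the sharp operator-norm bound $\tilde O(p\sqrt{n_1 n_2})$ on the off-diagonal of $NN^T - \E[NN^T]$.  Although the matrix looks Wishart-like, the entries $(NN^T)_{ij}$ and $(NN^T)_{i\ell}$ are not independent since both involve the row $N_{i*}$, so applying scalar Bernstein entrywise and union-bounding loses a factor of $\sqrt{n_1}$.  The clean route is matrix Bernstein applied to the independent rank-one summands $N_{*k}N_{*k}^T$, but verifying the boundedness and variance-proxy hypotheses requires care in the sparse regime where $n_1 p$ can be much smaller than $1$.  The remaining ingredients---the rank-$2$ cross-term bound, the scalar Bernstein for row degrees, Davis--Kahan, and sign rounding---are standard.
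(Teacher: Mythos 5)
Your proposal is correct in outline and shares the paper's high-level skeleton (the rank-two structure of the expected Gram matrix with signal eigenvalue $(\del-1)^2 n_1 n_2 p^2$, the identification of the diagonal fluctuation $\|D_V-\E D_V\|=O(\sqrt{n_2 p\log n_1})$ as the bottleneck for the vanilla SVD, Davis--Kahan, and the $\ell_2$-to-Hamming rounding step), but it takes a genuinely different route through the key technical step, which in the paper is the bound $\|B-\E B\|\le C\sqrt{n_1 n_2}\,p$. The paper handles the dependence among entries of $B$ by decomposing $G$ according to the degrees of vertices in $V_2$, splitting each degree-$i$ contribution into $\binom i2$ matrices with independent entries, coupling these to Poissonized versions, and invoking a Vu-type spectral norm bound; this yields the stated bound with no extra logarithmic loss. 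You instead write $M=\E M+N$, dispose of the rank-two cross terms $\bar M N^T+N\bar M^T$ by scalar concentration (your estimate $\tilde O(p^{3/2}n_1\sqrt{n_2})$ is indeed $o(\lam^\star)$ since $n_2p\to\infty$ at both thresholds), and control the off-diagonal of $NN^T-\E NN^T$ by matrix Bernstein over the $n_2$ independent rank-one column contributions $N_{*k}N_{*k}^T$. This is cleaner and uses standard tools, but it is slightly lossier: the variance proxy gives $p\sqrt{n_1 n_2\log n_1}$ rather than $p\sqrt{n_1n_2}$, and the range term $R\log n_1$ is governed by the maximum $V_2$-degree, which you must enforce by truncating/conditioning on the whp event that all column degrees are bounded (matrix Bernstein needs an almost-sure bound). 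In the extreme regime $n_2=\Theta(n_1\log^4 n_1)$ that maximum degree is $\Theta(\log n_1/\log\log n_1)$, so at $p=C(n_1n_2)^{-1/2}\log n_1$ the signal exceeds your noise bound only by a factor $\Theta(\log\log n_1)$, not the $\log^{1/2}n_1$ you claim; this still gives $\sin\theta=o(1)$ and hence recovery of a $1-o(1)$ fraction, but the margin should be stated correctly. Two further small points you should fold in, both also handled in the paper: when $\del\in\{0,2\}$ the top two eigenvalues of $\E B$ coincide, so one should work with the two-dimensional top eigenspace (or project off the all-ones direction) rather than the second eigenvector alone; and $\sigma$ is only approximately, not exactly, orthogonal to $\mathbf 1$, which is why the paper assumes exactly balanced classes for presentation and notes the general case needs only minor changes.
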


Next we show that below the recovery threshold for the SVD, the top left singular vectors are in fact \textit{localized}: they have nearly all of their mass on a vanishingly small fraction of coordinates.  

\begin{thm}
\label{thm:SVDfail}
\item Let $n_2 \ge n_1 \log^4 n_1$. For any constant $c>0$, let $p =c n_1^{-2/3} n_2^{-1/3}$, $t \le  n_1^{1/3}$, and $r= n_1/\log n_1$. Let $\overline \sigma = \sigma/ \sqrt{n_1}$,  and $v_1, v_2, \dots v_t$ be the top $t$ left unit-norm singular vectors of $M$. 

Then, whp, there exists a set $S \subset \{1, \dots, n_1 \}$ of coordinates, $|S| \le r$, so that for all $1 \le i \le t$, there exists a unit vector $u_i$ supported on $S$ so that 
\[ \| v_i - u_i \| = o(1). \]
That is, each of the first $t$ singular vectors has nearly all of its weight on the coordinates in $S$.  In particular, this implies that for all $1 \le i \le t$, $v_i$ is asymptotically uncorrelated with the planted partition: 
\[ | \overline \sigma \cdot v_i | = o(1). \]
\end{thm}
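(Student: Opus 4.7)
The plan is to view the top left singular vectors of $M$ as eigenvectors of $W := M M^T = D + B$, where $D = \mathrm{diag}(W)$ is the diagonal of row degrees $d_i = \sum_j M_{ij}$ and $B = W - D$ is the off-diagonal matrix of common-neighbor counts between pairs of vertices in $V_1$. At the critical density $p = c n_1^{-2/3} n_2^{-1/3}$ I will show that the spread of the top $\Theta(\log n_1)$ diagonal entries dominates $\|B\|$, which pins every top eigenvector of $W$ to a small set of high-degree coordinates.

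The first ingredient is the spread of the top order statistics of $D$. Since the $d_i \sim \mathrm{Bin}(n_2, p)$ are i.i.d.\ and the hypothesis $n_2 \ge n_1 \log^4 n_1$ forces the mean $\mu := n_2 p$ to satisfy $\mu \ge c(\log n_1)^{8/3} \gg \log n_1$, Gaussian-tail asymptotics for extreme Binomials give, whp,
\[ d_{(t)} \ge \mu + \Omega\bigl(\sqrt{\mu \log n_1}\bigr), \qquad d_{(r+1)} \le \mu + O\bigl(\sqrt{\mu \log\log n_1}\bigr), \]
for any $t \le n_1^{1/3}$ and $r = n_1/\log n_1$, producing a gap $g := d_{(t)} - d_{(r+1)} = \Omega\bigl(\sqrt{n_2 p \log n_1}\bigr)$.

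The second and main ingredient is the bound $\|B\| = o(g)$. Splitting $B = \E B + (B - \E B)$, a direct computation yields
\[ \E B \; = \; n_2 p^2 (\mathbf 1 \mathbf 1^T - I) \; + \; n_2 p^2 (\del-1)^2 (\sigma \sigma^T - I), \]
of operator norm $O(n_1 n_2 p^2) = O(n_1^{-1/3} n_2^{1/3}) = O(g/\sqrt{\log n_1}) = o(g)$. For the centered part, decompose $B = \sum_{j=1}^{n_2}\bigl(M_{\cdot j} M_{\cdot j}^T - \mathrm{diag}(M_{\cdot j} M_{\cdot j}^T)\bigr)$ as a sum of independent hollow rank-one matrices indexed by the columns of $M$, and control its operator norm by matrix concentration after truncating to columns of bounded degree (the discarded mass being negligible since $n_1 p = o(1)$). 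I expect this to be the main technical obstacle: off-the-shelf matrix Bernstein is not tight enough because single entries of $B$ can reach $\Theta(n_2 p) \gg g$, so a Feige--Ofek style trace-moment estimate after restricting to columns of degree $\le K = O(\log n_2/\log\log n_1)$ is likely needed to obtain $\|B - \E B\| = o(g)$.

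Given $\|B\| = o(g)$, the localization follows from a short resolvent identity. Let $S$ be the $r$ coordinates with the largest $d_i$; let $P_S, P_{S^c}$ be the orthogonal projectors onto $\mathrm{span}(e_i : i \in S)$ and its complement; and let $D_{S^c}$ be the principal submatrix of $D$ indexed by $S^c$, whose eigenvalues are bounded by $d_{(r+1)}$. For each top-$t$ singular vector $v_i$, Weyl's inequality gives $\sigma_i^2(M) = \lambda_i(W) \ge d_{(t)} - \|B\|$, so projecting $W v_i = \lambda_i(W) v_i$ onto $S^c$ yields $(\lambda_i(W) I - D_{S^c}) P_{S^c} v_i = P_{S^c} B v_i$, and since $\lambda_i(W) - d_{(r+1)} \ge g - \|B\| = (1-o(1)) g$ this matrix is invertible with $\|P_{S^c} v_i\| \le \|B\|/(\lambda_i(W) - d_{(r+1)}) = o(1)$. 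Setting $u_i := P_S v_i / \|P_S v_i\|$ therefore gives a unit vector supported on $S$ with $\|v_i - u_i\| = o(1)$, and the correlation bound is then immediate from Cauchy--Schwarz: $|\overline\sigma \cdot v_i| \le |\overline\sigma \cdot u_i| + \|v_i - u_i\| \le \sqrt{|S|/n_1} + o(1) = O(1/\sqrt{\log n_1}) = o(1)$.
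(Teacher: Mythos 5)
Your outline is correct and is essentially the paper's argument: the same splitting of $MM^T$ into the diagonal degree matrix plus the hollow common-neighbour matrix $B$, the same degree order statistics (the paper's Lemma~\ref{lem:degrees}) producing a gap of order $\sqrt{n_2p\log n_1}$ between the top $t$ and the $(r+1)$-st degrees while only $n_1/\log n_1$ degrees exceed the $\sqrt{n_2p\log\log n_1}$ level, the same estimate $\|B\|\le\|\E B\|+\|B-\E B\|=o(\sqrt{n_2 p\log n_1})$, and the same conclusion that the top singular vectors live (up to $o(1)$) on the $r$ highest-degree coordinates and hence decorrelate from $\overline\sigma$. The one step you do differently is the perturbation argument: the paper applies a Davis--Kahan-type subspace theorem (Lemma~\ref{lem:perturb}, after Bhatia) to $D_V-\E D_V$ perturbed by $B$, whereas you project the eigenvector equation onto $S^c$ and invert $\lambda_i(W)I-D_{S^c}$; your resolvent computation is correct and self-contained, and your Cauchy--Schwarz finish $|\overline\sigma\cdot u_i|\le\sqrt{|S|/n_1}$ is a slightly cleaner version of the paper's triangle-inequality estimate. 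The bound you leave as a plan, $\|B-\E B\|=o(g)$, is exactly the paper's Lemma~\ref{lem:norms}(2) (together with part (1) for $\E B$, which you compute correctly); its appendix proof proceeds much as you sketch, by decomposing according to the degrees of the $V_2$-vertices, coupling with Poisson matrices, and invoking Vu/Feige--Ofek-type norm bounds, so nothing in your outline would fail. Two small remarks: first, for this particular theorem the heavy machinery is not actually needed, since $B$ has nonnegative entries and so $\|B\|$ is at most the maximum number of $2$-paths leaving a vertex of $V_1$, which a Chernoff bound and union bound show is $o(\sqrt{n_2p\log n_1})$ whp at $p=cn_1^{-2/3}n_2^{-1/3}$ (the full strength of Lemma~\ref{lem:norms} is only required for the positive recovery results); second, your stated reason that matrix Bernstein is too lossy is inaccurate as written --- whp no entry of $B$ is anywhere near $\Theta(n_2p)$, since off-diagonal entries are $\text{Bin}(n_2,\Theta(p^2))$ variables and are $O(n_2p^2+\log n_1)=o(n_2p)$ whp --- though this misdiagnosis has no bearing on the validity of your plan.
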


One point of interest in Theorem \ref{thm:SVDfail} is that in this case of a random biadjacency matrix of unbalanced dimension, the localization and delocalization of the singular vectors can be understood and analyzed in a simple manner, in contrast to the more delicate phenomenon for random square adjacency matrices.

Our techniques use bounds on the norms of random matrices and eigenvector perturbation theorems, applied to carefully chosen decompositions of the matrices of interest.  In particular, our proof technique suggested the Diagonal Deletion SVD, which proved much more effective than the usual SVD algorithm on these unbalanced bipartite block models, and has the advantage over more sophisticated approaches of being extremely simple to describe and implement.  We believe it may prove effective in many other settings.

%\paramini{Effectiveness of Diagonal Deletion?}

Under what conditions might we expect the Diagonal Deletion SVD outperform the usual SVD?  The SVD is a central algorithm in statistics, machine learning, and computer science, and so any general improvement would be useful.  The bipartite block model addressed here has two distinctive characteristics: the dimensions of the matrix $M$ are extremely unbalanced, and the entries are very sparse Bernoulli random variables, a distribution whose fourth moment is much larger than the square of its second moment.  These two facts together lead to the phenomenon of multiple spectral thresholds and the outperformance of the SVD by the Diagonal Deletion SVD.  Under both of these conditions we expect dramatic improvement by using diagonal deletion, while under  one or the other condition, we expect mild improvement.  We expect diagonal deletion will be effective in the more general setting of recovering a low-rank matrix in the presence of random noise, beyond our setting of adjacency matrices of graphs.

\section{Planted $k$-SAT and hypergraph partitioning}
\label{sec:kSAT}

 \cite{feldman2014algorithm} reduce three planted problems  to solving the bipartite block model: planted hypergraph partitioning, planted random $k$-SAT, and Goldreich's planted CSP.  %As we have shown here an optimal algorithm for detection in the bipartite block model, this gives the best possible density for detection via this reduction.  
We describe the reduction here and calculate the density at which our algorithm can detect the planted solution by solving the resulting bipartite block model.  

We state the general model in terms of hypergraph partitioning first.  

\paragraph{Planted hypergraph partitioning}
Fix a function $Q : \{ \pm 1 \}^k \to [0,1]$ so that $\sum_{x \in \{\pm 1\}^k} Q(x) =1$. Fix parameters $n$ and  $p \in (0,1)$ so that $\max_x Q(x) 2^k p \le 1$.  Then we define the planted $k$-uniform hypergraph partitioning model as follows: 
\begin{itemize}
\item Take a vertex set $V$ of size $n$.
\item Assign labels `+' and `-' independently with probability $1/2$ to each vertex in $V$.  Let $\sigma \in \{ \pm 1 \}^{n}$ denote the labels of the vertices. 
\item Add (ordered) $k$-uniform hyperedges independently at random according to the distribution 
\[ \Pr(e) = 2^k p \cdot Q (\sigma(e)) \]
where $\sigma(e)$ is the evaluation of $\sigma $ on the vertices in $e$.
\end{itemize}
\noindent
\textbf{Algorithmic task:} Determine the labels of the vertices given the hypergraph, and do so with an efficient algorithm at the smallest possible edge density $p$. 

Usually $Q$ will be symmetric in the sense that $Q(x)$ depends only on the number of $+1$'s in the vector $x$, and in this case we can view hyperedges as unordered. We assume that $Q$ is not identically $2^{-k}$ as this distribution would simply be uniform and the planted partition would not be evident. 

Planted $k$-satisfiability is defined similarly: we fix an assignment $\sigma $ to $n$ boolean variables which induces a partition of the set of $2n$ literals (boolean variables and their negations) into true and false literals.  Then we add $k$-clauses independently at random, with probability proportional to the evaluation of $Q$ on the $k$ literals of the clause.

Planting distributions for the above problems are classified by their \textit{distribution complexity}, $r = \min_{S \ne \emptyset} \{ |S| : \hat Q(S) \ne 0 \}$, where $\hat Q(S)$ is the discrete Fourier coefficient of $Q$ corresponding to the subset $S \subseteq [k]$.  This is an integer between $1$ and $k$, where $k$ is the uniformity of the hyperedges or clauses.

A consequence of Theorem~\ref{thm:detection} is the following:

\begin{thm}
\label{thm:ksat}
There is an efficient algorithm to detect the planted partition in the random $k$-uniform hypergraph partitioning problem, with planting function $Q$, when
\[ p> (1+\eps)\min_{S \subseteq [k]}   \frac{1 }{ 2^{2k} \hat Q(S)^2 n^{k-|S|/2} }    \]
for any fixed $\eps >0$. 
Similarly, in the planted $k$-satisfiability model with planting function $Q$, there is an efficient algorithm to detect the planted assignment when
\[ p> (1+\eps)\min_{S \subseteq [k]}   \frac{1 }{ 2^{2k} \hat Q(S)^2 (2n)^{k-|S|/2} }   . \]
In both cases, if the distribution complexity of $Q$ is at least $3$, we can achieve full recovery at the given density.
\end{thm}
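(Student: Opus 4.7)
The plan is to deduce Theorem~\ref{thm:ksat} from Theorem~\ref{thm:detection} via the reduction of \cite{feldman2014algorithm} that maps planted hypergraph partitioning (and analogously planted $k$-SAT) into the bipartite stochastic block model. For each subset $S \subseteq [k]$ with $\hat Q(S) \neq 0$, that reduction produces a bipartite graph whose two sides $V_1, V_2$ are indexed by tuples of the $n$ variables and whose planted labels on each side are the Fourier characters $\chi_{S \cap \text{side}}$ applied to $\sigma$. The first step of the proof is to extract from \cite{feldman2014algorithm} the precise parameters of the resulting bipartite SBM: the effective gap is $|\del - 1| = \Theta(2^k |\hat Q(S)|)$, while $|V_1|, |V_2|$ and the bipartite edge density combine so that the threshold $p_{\mathrm{bip}} > (1 + \eps)/((\del - 1)^2 \sqrt{|V_1|\,|V_2|})$ of Theorem~\ref{thm:detection} translates into the hyperedge density condition
\[ p > \frac{1 + \eps}{2^{2k} \hat Q(S)^2 \, n^{k - |S|/2}}. \]
Minimizing over valid $S$ gives the first claim; the planted $k$-SAT version then follows by regarding the $2n$ literals as the underlying vertex set and repeating the calculation with $n$ replaced by $2n$.

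For full recovery when the distribution complexity $r$ is at least $3$, the plan is to bootstrap from detection. Given a $\tau \in \{\pm 1\}^n$ that agrees with $\sigma$ on a $(1/2 + \eps)$-fraction of variables, for each variable $i$ I would form a local estimator of $\sigma_i$ by aggregating a Fourier-weighted statistic over the hyperedges containing $i$ and using the $\tau$-labels of the other $k-1$ variables in each such hyperedge. When $r \ge 3$, each hyperedge contributes signal through a product of at least $r - 1 \ge 2$ noisy $\tau$-labels whose conditional bias is $\Theta((2\eps)^{r-1}) = \Theta(1)$, and a Chernoff plus union bound over the $n$ variables upgrades the correlated partition to full recovery, provided $p$ sits slightly above the detection threshold.

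The main obstacle is the parameter extraction in the first step: the bipartite graph produced by the reduction is only approximately a two-type SBM in the sense of Section~2. The Fourier expansion of $Q$ generically carries many nonzero coefficients, so the edge distribution is a superposition of rank-one contributions from every $\hat Q(T)$ rather than being driven solely by $\hat Q(S)$; moreover, the induced labels on each side are deterministic functions of $\sigma$ and hence correlated across vertices sharing an underlying variable, violating the i.i.d.\ labels convention of Section~2. One must argue that these extra Fourier modes and label correlations act only as lower-order perturbations of the two-type Poisson Galton-Watson coupling underlying Theorem~\ref{thm:detection}, so that the SBM reduction algorithm continues to succeed at the stated threshold; once this is handled, the translation of parameters between the hyperedge and bipartite pictures is routine arithmetic.
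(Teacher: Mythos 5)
Your overall route—reduce via \cite{feldman2014algorithm} to the bipartite block model and invoke Theorem~\ref{thm:detection}, then minimize over $S$—is exactly the paper's, but the step you defer as the ``main obstacle'' is the actual content of the proof, and it is also partly misdiagnosed. The resolution is not a perturbative argument showing that extra Fourier modes and label correlations are lower-order: one fixes $S$ with $\hat Q(S)\ne 0$ of \emph{minimal} size $|S|=r$ (the distribution complexity) and first projects each $k$-edge onto the coordinates of $S$. Marginalizing the planting distribution onto these $r$ coordinates kills every coefficient $\hat Q(T)$ with $T\not\subseteq S$, and minimality forces $\hat Q(T)=0$ for all nonempty $T\subsetneq S$; hence the projected $r$-edge distribution depends only on the parity of the labels, and after the random split into a vertex and an $(r-1)$-tuple the edge probability depends only on whether the $V_1$-label agrees with the parity label of the tuple. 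This is \emph{exactly} the bipartite block model of Section 2 with $\del = 1+2^k\hat Q(S)$ (Lemma 1 of \cite{feldman2014algorithm}), with $n_1=n$, $n_2\sim n^{r-1}$, so Theorem~\ref{thm:detection} applies verbatim and yields the threshold $p^*=1/(2^{2k}\hat Q(S)^2 n^{k-r/2})$; no superposition of rank-one contributions survives. The label-correlation worry is likewise benign: the detection analysis conditions on $\sigma,\tau$ and needs only that the $V_2$ labels are nearly balanced, which the parities of a nearly balanced $\sigma$ are. Note also that it suffices to argue for $|S|=r$, since the factor $n^{k-|S|/2}$ makes minimal $|S|$ asymptotically optimal in the minimum. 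Without this reduction-specific argument, the ``routine arithmetic'' you describe has nothing exact to translate.

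For full recovery when $r\ge 3$, the paper does not run a bespoke boosting argument: it cites Theorem 2 of \cite{bogdanov2009security}, which upgrades an $\eps$-correlated assignment to full recovery using only a linear number of \emph{additional} hyperedges or clauses, lower order than the $\Theta(n^{r/2})$ already consumed. Your majority-vote estimator is in the same spirit but has a gap as written: the correlated assignment $\tau$ is a function of the very hyperedges you then aggregate over, so the conditional bias computation and the Chernoff-plus-union-bound step are not justified without breaking this dependence (sample splitting or fresh clauses)—which is precisely what the cited result supplies.
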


\begin{proof}
Suppose $Q$ has distribution complexity $r$. Fix a set $S \subseteq [k]$ with $\hat Q(S) \ne 0$, and $|S|=r$.  The first step of the reduction of \cite{feldman2014algorithm} transforms each $k$-uniform hyperedge into an $r$-uniform hyperedge by selecting the vertices indicated by the set $S$.  Then a bipartite block model is constructed on vertex sets $V_1, V_2$, with $V_1$ the set of all vertices in the hypergraph (or literals in the formula), and $V_2$ the set of all $(r-1)$-tuples of vertices or literals.  An edge is added by taking each $r$-uniform edge and splitting it randomly into sets of size $1$ and $r-1$ and joining the associated vertices in $V_1$ and $V_2$. The parameters in our model are $n_1 =n$ and $n_2 \sim n^{r-1}$ (considering  ordered $(r-1)$-tuples of vertices or literals).

These edges appear with probabilities that depend on the parity of the number of vertices on one side of the original partition in the joined sets, exactly the bipartite block model addressed in this paper; the parameter $\del$ in the model is given by $\del = 1+ 2^k \hat Q(S)$ (see Lemma 1 of \cite{feldman2014algorithm}).  Theorems \ref{thm:detection}  then states that detection in the resulting block model exhibits a sharp threshold at edge density $p^*$ , with $p^* =   \frac{1}{2^{2k} \hat Q(S)^2 n^{k-r/2}}$. The difference in bounds in Theorem~\ref{thm:ksat} is due to the two models having $n$ vertices and $2n$ literals respectively.

%\begin{equation}
%\label{eq:ksat}
% m^* = \min_{S \subseteq [k]}   \frac{n^{|S|/2} }{ 2^{2k} \hat Q(S)^2 } .  
% \end{equation}

To go from an $\eps$-correlated partition to full recovery, if $r \ge 3$, we can appeal to Theorem 2 of  \cite{bogdanov2009security} and achieve full recovery using only a linear number of additional hyperedges or clauses, which is lower order than the $\Theta(n^{r/2})$ used by our algorithm. 
\end{proof}
Note that Theorem~\ref{thm:noDetect} says that no further improvement can be gained by analyzing this particular reduction to a bipartite stochastic block model.

% One direction for improvement would be to modify the reduction to the bipartite block model to give multiple edges in the block model for each hyperedge or clause in the original model; this would give more information about the partition but dependencies between the resulting edges would have to be addressed.  

There is some evidence that up to constant factors in the clause or hyperedge density, there may be no better efficient algorithms \cite{ow, feldman2013complexity}, unless the constraints induce a consistent system of linear equations.  But in the spirit of \cite{decelle2011asymptotic}, we can ask if there is in fact a sharp threshold for detection of planted solutions in these models.  In one special case, such sharp thresholds have been conjectured:  \cite{krzakala2014reweighted} have conjectured threshold densities based on fixed points of belief propagation equations.  The planted $k$-SAT distributions covered, however, are only those with distribution complexity $r=2$: those that are known to be solvable with a linear number of clauses.  We ask if there are sharp thresholds for detection in the general case, and in particular for those distributions with distribution complexity $r \ge 3$ that cannot be solved by Gaussian elimination. In particular, in the case of the parity distribution we conjecture that there is a sharp threshold for detection.  

\begin{conj}
Partition a set of $n$ vertices at random into sets $A, B$.  Add $k$-uniform hyperedges independently at random with probability $\del p$ if the number of vertices in the edge from $A$ is even and $(2- \del)p$ if the number of vertices from $A$ is odd.  Then for any $\del \in (0,2)$ there is a constant $c_{\del}$ so that $p = c_\del n^{-k/2}$ is a sharp threshold for detection of the planted partition by an efficient algorithm.  That is, if $p> (1+\eps)c_\del n^{-k/2}$, then there is a polynomial-time algorithm that detects the partition whp, and if $p \le c_\del n^{-k/2}$ then no polynomial-time algorithm can detect the partition whp. 
\end{conj}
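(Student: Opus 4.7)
The upper bound follows immediately from Theorem~\ref{thm:ksat} applied to parity. For the parity planting function, $Q(x) = \del/2^k$ on even parity strings and $(2-\del)/2^k$ on odd ones, so all Fourier coefficients vanish except $\hat Q(\emptyset) = 1/2^k$ and $\hat Q([k]) = (\del-1)/2^k$. Hence the minimum in Theorem~\ref{thm:ksat} is attained at $S = [k]$ with value $1/((\del-1)^2 n^{k/2})$, giving a polynomial-time detection algorithm whenever $p > (1+\eps)/((\del-1)^2 n^{k/2})$. This fixes $c_\del = 1/(\del-1)^2$ as the candidate threshold.

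\textbf{Lower bound via contiguity.} For the lower bound I would first aim for information-theoretic non-detection, which trivially implies the computational claim. Let $\mathbb{P}_p$ denote the planted distribution and $\mathbb{Q}_p$ the uniform Bernoulli-$p$ random $k$-uniform hypergraph. Expanding the second moment of the likelihood ratio $L = d\mathbb{P}_p/d\mathbb{Q}_p$ edge by edge and using the explicit form of $Q$ yields, to leading order, $\E_{\mathbb{Q}_p}[L^2] \approx \E_{\sigma,\sigma'}\exp\{p(\del-1)^2 n^k \rho^k / k!\}$, where $\rho = \langle \sigma, \sigma'\rangle/n$ is the overlap. At $p = c_\del n^{-k/2}$ the exponent becomes $c_\del(\del-1)^2 (n\rho^2)^{k/2}/k!$. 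For $k=2$ this is integrable when $c_\del$ is small enough, recovering a weak form of Theorem~\ref{thm:noDetect}; but for $k \ge 3$ the subgaussian tail of $\sqrt{n}\rho$ is overwhelmed by the polynomial $(n\rho^2)^{k/2}$ and the plain second moment diverges.

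\textbf{Fixing divergence and main obstacle.} To repair the second moment for $k \ge 3$ my plan is a conditional second moment: condition on the number of hyperedges and on the counts of a carefully chosen family of small subgraphs so that the rare, large-overlap $(\sigma,\sigma')$ pairs are excised from the calculation, following the planted-CSP machinery of Coja-Oghlan and coauthors. A parallel, more structural, approach is to extend the broadcast-tree argument underlying Theorem~\ref{thm:noDetect}: couple the depth-$O(\log n)$ local neighborhood of a vertex to a Galton-Watson hypertree in which every vertex is incident to $\pois(\lam)$ hyperedges, each carrying $k-1$ children linked by a parity broadcast channel, and prove non-reconstruction of the root label from depth-$d$ labels below a Kesten-Stigum-type threshold. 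The chief obstacle is \emph{tightness}: showing that either the conditional second moment is bounded, or equivalently that the non-reconstruction threshold on the parity hypertree lies, at exactly the same constant $c_\del$ at which Theorem~\ref{thm:ksat} becomes effective, is delicate for $k \ge 3$ (in stark contrast to $k=2$, where the constant was pinned down by \cite{mossel2013proof,massoulie2014community}). A further obstacle is that the conjecture asserts a \emph{computational} lower bound; upgrading an information-theoretic non-detection statement to this stronger claim would require either an unconditional computational hardness result or a rigorous statistical-computational equivalence for parity planted CSPs, neither of which is currently available.
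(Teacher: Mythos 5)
This statement is a conjecture that the paper deliberately leaves open (the authors even write that they ``do not venture a guess as to the precise constant $c_\del$''), so there is no proof in the paper to compare against; your proposal must stand on its own, and it does not. The upper-bound half is fine as far as it goes: for the parity planting function the only nonzero nonempty Fourier coefficient is $\hat Q([k])=\pm(\del-1)/2^k$, and Theorem~\ref{thm:ksat} then gives efficient detection for $p>(1+\eps)(\del-1)^{-2}n^{-k/2}$. But this only shows that \emph{if} a sharp threshold exists it lies at or below $(\del-1)^{-2}n^{-k/2}$; claiming it ``fixes'' $c_\del=(\del-1)^{-2}$ overreaches. Theorem~\ref{thm:noDetect} only says that this particular reduction to the bipartite block model cannot be improved; it says nothing about other efficient algorithms acting directly on the hypergraph, which is exactly why the paper declines to name the constant (the coincidence you check implicitly for $k=2$ does not transfer).

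The lower-bound half is where the approach genuinely fails. For $k\ge 3$, at $p=\Theta(n^{-k/2})$ the hypergraph has $\Theta(n^{k/2})\gg n$ hyperedges, and the planted parity model is information-theoretically distinguishable from the null model already at density $\Theta(n^{1-k})$ (maximum likelihood / exhaustive search over partitions needs only $\Theta(n)$ clauses). So there is no contiguity statement to prove just below $c_\del n^{-k/2}$: the divergence of your second moment is not a technical artifact to be repaired by conditioning on small-subgraph counts; it reflects the true statistical separation of the two distributions in this regime. The broadcast-tree route fares no better: a vertex lies in $\Theta(p\,n^{k-1})=\Theta(n^{k/2-1})\to\infty$ hyperedges for $k\ge3$, so there is no bounded-offspring Galton--Watson hypertree to couple the local neighborhood to, and even if there were, tree non-reconstruction would again deliver only an information-theoretic bound, which is false here. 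The impossibility direction of the conjecture is intrinsically a \emph{computational} hardness assertion -- this is precisely the conjectured statistical-computational gap for parity-type planted CSPs of distribution complexity $r\ge3$ discussed around \cite{ow, feldman2013complexity} -- and, as you concede only in your final sentence, no unconditional tools are available to prove it. That concession is the honest core of the matter: the plan as proposed (conditional second moment, or Kesten--Stigum-type non-reconstruction pinned at the same constant) cannot establish the conjectured lower bound even in principle, which is why the statement remains a conjecture.
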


This is a generalization to hypergraphs of the SBM conjecture of \cite{decelle2011asymptotic}; the $k=2$ parity distribution is that of the stochastic block model.  We do not venture a guess as to the precise constant $c_\del$, but even a heuristic as to what the constant might be would be very interesting. 

\subsection{Relation to Goldreich's generator}
\label{sec:goldreich}
\cite{goldreich2000candidate}'s pseudorandom generator or one-way function can be viewed as a variant of planted satisfiability.  Fix an assignment $\sigma$ to $n$ boolean variables, and fix a predicate $P : \{ \pm 1 \}^k \to \{0,1 \}$.  Now choose $m$ $k$-tuples of variables \textit{uniformly} at random, and label the $k$-tuple with the evaluation of $P$ on the tuple with the boolean values given by $\sigma$.  In essence this generates  a uniformly random $k$-uniform hypergraph with labels that depend on the planted assignment and the fixed predicate $P$.  The task is to recover $\sigma$ given this labeled hypergraph.  The algorithm we describe above will work in this setting by simply discarding all hyperedges labeled $0$ and working with the remaining hypergraph.  
 
%In related work, \cite{ow} and  \cite{aow} show results that rule out some of Goldreich's PRG constructions, and also disprove the random CSP hypothesis of Daniely, Linial, Schwartz \cite{dls}. More specifically, in \cite{ow}, the authors show that when a slight variant of the generator is used with a local predicate $P(x)$ which is $(t-1)$-wise independent, one can allow stretch $n^{t/2-\epsilon}$ for any $\epsilon > 0$ while resisting subexponential-time attacks based on the Sherali-Adams$^{+}$ SDP hierarchy. In \cite{aow} the authors show a possibly sharp bound on the number of constraints needed to refute random $\text{CSP}(P)$ instances. In detail, they show that for any $k$-ary predicate $P$ there is an efficient algorithm that whp strongly refutes random $\text{CSP}(P)$ instances when the number of constraints is at least $\tilde O(n^{k/2})$. We stated the bound for the planted $k$-SAT problem above but identical bounds apply to Goldreich's $k$-CSP.

\section{Related work}

The stochastic block model has been a source of considerable recent interest. There are many algorithmic approaches to the problem, including algorithms based on maximum-likelihood methods \cite{sn}, belief propagation \cite{decelle2011asymptotic}, spectral methods \cite{mcsherry2001spectral}, modularity maximization \cite{bc}, and combinatorial methods \cite{bcls}, \cite{df}, \cite{js}, \cite{ck}.  \cite{coja2010graph} gave the first algorithm to detect partitions in the sparse, constant average degree regime. \cite{decelle2011asymptotic} conjectured the precise achievable constant and subsequent algorithms \cite{massoulie2014community, mossel2013proof, bordenave2015non, as} achieved this bound. Sharp thresholds for full recovery (as opposed to detection) have been found by \cite{mossel2015consistency, abh, hajek2015achieving}.     %\cite{decelle2011asymptotic} used statistical physics methods to analyze the phase transition of detection/non-reconstruction in the stochastic block model. They also proposed a belief propagation algorithm for recovery and give several conjectures for both planted partitioning and planted coloring in random graphs.  \cite{mossel2012stochastic} proved non-reconstruction when $(a-b)^2 \le 2(a+b)$.  \cite{massoulie2014community} and  \cite{mossel2013proof} independently proved that detection is possible and efficient when $(a-b)^2 > 2 (a+b)$. More recently, \cite{bordenave2015non}  proved the general form of the detection half of the conjecture from \cite{decelle2011asymptotic} by analyzing the spectrum of the non-backtracking operator.  

% \cite{mcsherry2001spectral} gave a  spectral algorithm to solve a general model of planted partitioning in random graphs. Coja-Oghlan gave an algorithm for graph partitioning in \cite{coja2010graph} that succeeds on sparse graphs with bounded average degree.  
 
 \cite{bogdanov2009security} used ideas for reconstructing assignments to random $3$-SAT formulas in the planted $3$-SAT model to show that Goldreich's construction of a one-way function in \cite{goldreich2000candidate} is not secure when the predicate correlates with either one or two of its inputs. For more on Goldreich's PRG from a cryptographic perspective see the survey of \cite{applebaum2013cryptographic}. 
 
  \cite{feldman2014algorithm} gave an algorithm to recover the partition of $V_1$ in the bipartite stochastic block model to solve instances of planted random $k$-SAT and planted hypergraph partitioning using subsampled power iteration.

A key part of our analysis relies on looking at an auxiliary graph on $V_1$ with edges between vertices which share a common neighbor; this is known as the one-mode projection of a bipartite graph:  \cite{zhou2007bipartite} give an approach to recommendation systems using a weighted version of the one-mode projection.  One-mode projections are implicitly used in studying collaboration networks, for example in \cite{newman2001scientific}'s analysis of scientific collaboration networks.  \cite{larremore2014efficiently} defined a general model of bipartite block models, and propose a community detection algorithm that does not use one-mode projection.

The behavior of the singular vectors of a low rank rectangular matrix plus a noise matrix was studied by  \cite{benaych2012singular}.  The setting there is different: the ratio between $n_1$ and $n_2$ converges, and the entries of the noise matrix are mean $0$ variance $1$.  

\cite{butucea2015sharp} and \cite{hajek2015submatrix} both consider the case of recovering a planted submatrix with elevated mean in a random rectangular Gaussian matrix. 

%Algorithms for exact recovery have also been studied. \cite{abh} show that if $\alpha = pn/\log(n)$ and $\beta = qn/\log(n)$ are constant, recovering the communities with high probability is possible if $\frac{\alpha+\beta}{2} - \sqrt{\alpha\beta} >1$ and impossible if $\frac{\alpha+\beta}{2} - \sqrt{\alpha\beta} <1$. \cite{crv} gave a spectral algorithm with optimal rate of recovery  for an SBM with $k$ blocks, where $k$ is fixed. %Mossel, Neeman, Sly show in \cite{mnsrecovery} ...

%Recently, the sharp threshold conjecture for $k \ge 2$ communities by Decelle, Krzakala, Moore and Zdeborova has also been solved by  \cite{as}. Specifically, they show that when the Kesten-Stigum bound $\text{SNR} = (a-b)^2/(k(a+(k-1)b)) >1$ (where $a/n$ and $b/n$ are the usual probabilities of edges inside and across clusters, respectively), it is possible to detect communities efficiently, whereas for $k \ge 5$, it is possible to detect communities information-theoretically for some $\text{SNR} < 1$.

 \paragraph{Notation}
 
 All asymptotics are as $n_1 \to \infty$, so for example, `$E$ occurs whp' means $\displaystyle\lim_{n_1 \to \infty} \Pr(E) = 1$.   We write $f(n_1) = \tilde O(g(n_1))$ and $f(n_1) = \tilde \Omega(g(n_1))$ if there exist constants $C,c$ so that $f(n_1) \le C \log^c (n_1) \cdot g(n_1)$ and $f(n_1) \ge  g(n_1)/ (C \log^c(n_1))$ respectively.  For a vector, $\| v \|$ denotes the $l_2$ norm.  For a matrix, $\| A \|$ denotes the spectral norm, i.e. the largest singular value (or largest eigenvalue in absolute value for a square matrix). For ease of reading, $C$ will always denote an absolute constant, but the value may change during the course of the proofs.

\section{Proof of Theorem~\ref{thm:detection}: detection}
\label{sec:detection}

In this section we prove Theorem~\ref{thm:detection}, giving an optimal algorithm for detection in the bipartite stochastic block model when $n_2 = \omega(n_1)$.  The main idea of the proof is that almost all of the information in the bipartite block model is in the subgraph induced by $V_1$ and the vertices of degree two in $V_2$.  From this induced subgraph of the bipartite graph we form a graph $G^\prime$ on $V_1$ by replacing each path of length two from $V_1$ to $V_2$ back to $V_1$
with a single edge between the two endpoints in $V_1$.  We then apply an algorithm from \cite{massoulie2014community, mossel2013proof}, or \cite{bordenave2015non} to detect the partition.  

\begin{proof}[Proof of Theorem~\ref{thm:detection}]
Fix $\eps >0$. Given an instance $G$ of the bipartite block model with 
\[ p =(1+\eps) (\del-1)^{-2} (n_1 n_2)^{-1/2}, \]
 we reduce to a graph $G^\prime$ on $V_1$ as follows: 
\begin{itemize} 
\item Sort $V_2$ according to degrees and remove any vertices (with their accompanying edges) which are not of degree $2$.
 \item We now have a union of $2$-edge paths from vertices in $V_1$ to vertices in $V_2$ and back to vertices in $V_1$.  Create a multi-set of edges $\mathcal E$ on $V_1$ by replacing each $2$-path $u - v -w$ by the edge $(u,w)$.   
 \item Choose $N$ from the distribution $\text{Poisson}((1+\eps) (\del-1)^{-4} n_1/2)$. 
\item If $N > | \mathcal E |$, then stop and output `failure'.  Otherwise, select $N$ edges uniformly at random from $\mathcal E$ to form the graph $G^\prime$ on $V_1$, replacing any edge of multiplicity greater than one with a single edge.   
% \item Choose $N$ from the distribution $\text{Poisson}\left(\frac{ n_1 (1+\eps)}{2 (\del-1)^4} \right)$.  
% \item Form $G^\prime$ by picking a random subgraph of  $N$ distinct edges from $G_1$.  
 \item Apply an SBM  algorithm to $G^\prime$ to partition $V_1$. 
 \end{itemize}

We now determine the distribution of $G^\prime$ conditioned on $\sigma$.  Let $\beta_1$ be the bias of  $+1$ labels in $\sigma$, $\beta_1 = \frac{1}{n_1} \sum_{u \in V_1} \sigma(u)$.  Conditioned on $\beta_1$, the degrees $d(v_1), \dots , d(v_{n_2})$ of the vertices of $V_2$ are independent, identically distributed random variables. Let $Y_i =d(v_i)$.  Under the high probability event that $\beta_1 = o(n_1^{-1/3})$, we can compute 
\begin{align*}
\Pr[Y_i =2 | \sigma, \beta_1 = o(n_1^{-1/3})] &= \frac{n_1^2 p^2}{2} (1+o(1))
\end{align*}
and so whp, $| \mathcal E | = \frac{(1+\eps)^2 n_1 }{ (\del-1)^4   } (1+o(1))  $, and $N < | \mathcal E|$.  Note that it is only at this step that we require the assumption that $n_2 = \omega(n_1)$. 

Conditioned on $\sigma$, the edges in $\mathcal E$ are independent and identically distributed, with distribution of a given edge $e=(u,v)$ as
\begin{align*}
p_{a} &=\Pr[e = (u,v) | \sigma(u)=\sigma(v) ]  \\
&= \frac{\frac{1}{2} \del^2 + \frac{1}{2} (2-\del)^2   }{ \left( \binom  {(\beta_1+1)n_1/2}{2} + \binom  {(\beta_1-1)n_1/2}{2} \right ) \frac{ \del^2 +(2-\del)^2}{2} + (\beta_1+1)(\beta_1-1)(n_1^2/4) \del (2-\del)   }   \\
 p_{b} &= \Pr[e = (u,v) | \sigma(u) \ne \sigma(v)] \\
 &= \frac{\del (2-\del)  }{ \left( \binom  {(\beta_1+1)n_1/2}{2} + \binom  {(\beta_1-1)n_1/2}{2} \right ) \frac{ \del^2 +(2-\del)^2}{2} + (\beta_1+1)(\beta_1-1)(n_1^2/4) \del (2-\del)   } 
\end{align*}
When $\beta_1 = o(n_1^{-1/3})$, 
\begin{align*}
p_{a} &=  \frac{2 - 2\del +\del^2   }{  2 } \cdot \frac{4}{n_1^2} (1+o(1)) \,\, \text{ and } \,\, p_{b} = \frac{2\del -\del^2  } {2 }\cdot \frac{4}{n_1^2} (1+o(1)).
\end{align*}

By Poisson thinning this means that the number of times each $++$ or $--$ edge appears in the subsampled collection of edges is a Poisson of mean $p_{a} \E N$,  and each $+-$ edge according to a Poisson of mean $p_{b} \E N$, and all of these edge counts are independent. 

Now define $a$ so that $\Pr [\text{Poisson} (  p_{a} \cdot \E N) \ge 1] = \frac{a}{n_1}$, and $b$ so that $\Pr [\text{Poisson} (  p_{b} \cdot \E N) \ge 1] = \frac{b}{n_1}$.  

From the construction above,  conditioned on $\sigma$ the distribution of $G^\prime$ is that of the stochastic block model on $V_1$ with partition $\sigma$: each edge interior to the partition is present with probability $a/n_1$, each crossing edge with probability $b/n_1$, and all edges are independent. 

For $\sigma$ such that $\beta_1 = o(n^{-1/3})$, we have
\begin{align*}
a &=  \frac{(1+\eps) (2- 2\del + \del^2)  }{ (\del-1)^4   }  (1+ o(1))\\
b & =  \frac{(1+\eps) (2\del - \del^2)  }{ (\del-1)^4   }  (1+ o(1))
\end{align*}
For these values of $a$ and $b$  the condition for detection in the SBM, $(a-b)^2 \ge (1+\eps) 2 (a+b)$ is satisfied and so whp the algorithms from \cite{massoulie2014community, mossel2013proof, bordenave2015non} will find a partition that agrees with $\sigma $ on $1/2 + \eps^\prime$  fraction of vertices.  
\end{proof}

\section{Proof of Theorem~\ref{thm:noDetect}: impossibility}
The proof of impossibility below the threshold $(a-b)^2 = 2 (a+b)$ in \cite{mossel2012stochastic} proceeds by showing that the $\log n$ depth neighborhood of a vertex $\rho$, along with the accompanying labels, can be coupled to a binary symmetric broadcast model on a Poisson Galton-Watson tree.  In this model, it was shown by \cite{evans2000broadcasting} that reconstruction, recovering the label of the root given the labels at depth $R$ of the tree, is impossible as $R \to \infty$, for the corresponding parameter values (the critical case was shown by \cite{pemantle2010critical}).  

In the binary symmetric broadcast model, the root of a tree is labeled with a uniformly random label $+1$ or $-1$, and then each child takes its parent's label with probability $1- \eta$ and the opposite label with probability $\eta$, independently over all of the parent's children.  The process  continues in each successive generation of the tree.

The criteria for non-reconstruction can be stated as $(1- 2\eta)^2 B \le 1$, where $B$ is the branching number of the tree $T$.  The branching number is $B = p_c(T)^{-1}$, where $p_c$ is the critical probability for bond percolation on $T$ (see \cite{lyons1990random} for more on the branching number).

Assume first that $n_2 \sim c n_1$ for some constant $c$, and that $p = d/n_1$. Then there is a natural multitype Poisson branching process that we can associate to the bipartite block model: nodes of type $1$, corresponding to vertices in $V_1$, have a $\pois(cd)$ number of children of type $2$; nodes of type $2$, corresponding to vertices in $V_2$, have a $\pois(d)$ number of children of type $1$.  The branching number of this distribution on trees is $\sqrt{c} \cdot d$, an easy calculation by reducing to a one-type Galton Watson process by combining two generations into one.  Transferring the block model labeling to the branching process gives $\eta = \del /2$, and so the threshold for reconstruction is given by

\[ (\del-1)^2 \sqrt{c} d \le  1  \]
or in other words,
\[ p \le  \frac{1}{(\del-1)^2 \sqrt{n_1 n_2}} \]
exactly the threshold in Theorem \ref{thm:noDetect}.  In fact, in this case the proof from \cite{mossel2012stochastic} can be carried out in essentially the exact same way in our setting.  

Now  take $n_2 = \omega(n_1)$.  A complication arises: the distribution of the number of neighbors of a  node of type $1$ does not converge (its mean is $n_2 p \to \infty$), and the distribution of the number of neighbors of a node of type $2$ converges to a delta mass at $0$.  But this can be fixed by ignoring the vertices in $V_2$ of degree $0$ and $1$.  Now we explore from a vertex $\rho \in V_1$, but discard any vertices from $V_2$ that do not have a second neighbor.  We denote by $\hat G$ the subgraph of $G$ induced by $V_1$ and the vertices of $V_2$ of degree at least $2$.  Let $T$ be the branching process associated to this modified graph: nodes of type $1$ have $\pois(d^2)$ neighbors of type $2$, and nodes of type $2$ have exactly $1$ neighbor of type $1$, where here $p = d/\sqrt{n_1 n_2}$.  The branching number of  this  process is $d$, and the reconstruction threshold is $(\del -1)^2 d \le 1$, again giving the threshold $p \le  \frac{1}{(\del-1)^2 \sqrt{n_1 n_2}}$, as required. 

As in \cite{mossel2012stochastic}, the proof of impossibility will show the stronger statement that conditioned on the label of a fixed vertex $w \in V_1$ and the graph $G$, the variance of the label of another fixed vertex $\rho$ tends to $1$ as $n_1 \to \infty$.  The proof of this fact has two main ingredients: showing that the depth $R$ neighborhood of a vertex $\rho$ in the bipartite block model (with vertices of degree $0$ and $1$ in $V_2$ removed) can be coupled with the branching process described above, and showing that conditioned on the labels on the boundary of the neighborhood, the label of $\rho$ is asymptotically independent of the rest of the graph and the labels outside of the neighborhood.  We will use the notation from Section 4 of \cite{mossel2012stochastic} and indicate the places in which our proof must differ; the most significant is that we must show that the vertices of degree $0$ and $1$ in $V_2$ give essentially no information about the label of $\rho$.

First note that in Proposition 4.2 from \cite{mossel2012stochastic}, $R = \Theta(\log n)$, but for the proof of Theorem 2.1 all that is required is $R = \omega(1)$. We  choose 
\[R=  \frac{1}{20 d} \cdot \min \{ \log n_1, \log (n_2/n_1)  \} = \omega(1) .\]
%to ensure that whp in exploring the depth $R$ neighborhood of a vertex $\rho \in V_1$, we encounter no vertices of degree $\ge 3$ from $V_2$.  

Let $T$ be the branching process described above, starting with a root of type $1$.  We will denote the labeling functions of nodes of type $1$ and type $2$ in $T$ by $\hat \sigma$ and $\hat \tau$ respectively. We will consider two steps of the exploration process at once, so the depth $0$ neighborhood is $\rho$ itself, the depth $1$ neighborhood is $\rho$, its neighbors (of degree at least $2$), and the neighbors of these neighbors.  The depth $r$ neighborhood then includes those vertices in $V_1$ at distance $2r$ from $\rho$. Let $\hat G_r$ be this depth $r$ neighborhood in $\hat G$, and $\sigma_r, \tau_r$ be the labelings of $V_1$ and $V_2$ restricted to the vertices in $\hat G_r$.  Define $T_R, \hat \sigma, \hat \tau$ as the same objects for the tree process. Let $\partial^{1} \hat G_r, \partial ^1 T_r$ be the set of vertices from $V_1$, nodes of type $1$, in the last layer, and $\partial^{2}  \hat G_r, \partial ^2 T_r$ the vertices from $V_2$ and nodes of type $2$ in the last layer. Let $\overline V_r = V_1 \cup V_2 \setminus V(\hat G_r)$.  We will show:

\begin{lemma}
\label{lem:Rdepth}
With $R$ as above, there is a coupling so that $(\hat G_R, \sigma_R,\tau_R) = (T_R, \hat \sigma_R, \hat \tau_R)$ whp.   
\end{lemma}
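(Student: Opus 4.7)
The plan is to build the coupling by a joint two-layer breadth-first exploration from $\rho$, in the spirit of Proposition~4.2 of \cite{mossel2012stochastic}. Specifically, I would reveal $\hat G_R$ by exploring from $\rho$ in rounds: round $r$ reveals all new type-2 vertices attached to the type-1 boundary of round $r-1$, then all new type-1 neighbors of those type-2 vertices. In parallel I grow $T_R$ using its offspring distributions ($\pois(d^2)$ at type-1 nodes, exactly $1$ at type-2 nodes). If at each reveal step the conditional distributions of the new children in the two processes agree up to a small total-variation error, a union bound over all reveal steps yields the coupling whp.

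First I would bound $|T_R|$. The two-generation branching number is $d^2$, so $\E |T_R| \le C d^{2R}$, and the choice $R = \tfrac{1}{20d}\min\{\log n_1, \log(n_2/n_1)\}$ gives whp $|T_R| \le n_1^{1/10}$, much smaller than both $n_1$ and $n_2$. This smallness lets me treat the already-revealed sets as negligible corrections throughout, so that for each type-1 vertex being explored, the distribution of unseen $V_2$-neighbors is $\text{Bin}(n_2 - O(|T_R|), p)$, and each such neighbor independently has at least one additional $V_1$-neighbor with probability $1 - (1-p)^{n_1 - O(|T_R|)} = n_1 p (1+o(1))$. By Le Cam's inequality the resulting count is within total variation $o(n_1^{-1/5})$ of $\pois(n_1 n_2 p^2) = \pois(d^2)$, and summing the errors over $|T_R|$ reveal steps is still negligible.

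Second I would show that whp each revealed type-2 vertex has exactly one "extra" type-1 neighbor beyond the one that discovered it, matching the tree's deterministic offspring of $1$ at type-2 nodes. Conditioned on a type-2 vertex having degree $\ge 2$ in $V_1$, the conditional probability of degree $\ge 3$ is $O(n_1 p) = o(1)$; a union bound over the at most $|T_R|$ revealed type-2 vertices makes this uniformly true. Conditioned on being exactly $2$, the extra neighbor is uniform on $V_1$ minus the already-revealed type-1 vertices, which matches the tree's choice up to $O(|T_R|/n_1)$ total-variation error. A nearly identical union bound rules out cycles (two revealed type-2 vertices sharing a type-1 neighbor), since this event has probability $O(|T_R|^2 / n_1) = o(1)$.

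The main obstacle I anticipate is bookkeeping the conditioning that defines $\hat G$: the event "type-2 vertex $v$ has degree $\ge 2$" depends on edges from $v$ to type-1 vertices far outside the current exploration, so one must argue that conditioning on this event does not distort the Poisson$(d^2)$ rate of discoveries at a type-1 vertex. The standard workaround is to split each type-2 vertex's $V_1$-neighborhood into a "first extra neighbor" and a "rest," show that the rest is typically empty, and note that the event "degree $\ge 2$" is then independent of the identities of the currently-explored type-1 vertices (beyond the leading-order $n_1 p$ factor already accounted for in the rate $d^2$). Once this is in hand, the rest is standard coupling bookkeeping, and the asymptotic independence of the label of $\rho$ from the labels outside the neighborhood is inherited directly from the corresponding argument in \cite{mossel2012stochastic}.
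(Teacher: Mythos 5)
Your exploration scheme is the same one the paper uses (two-layer BFS from $\rho$, Poisson approximation for the number of degree-$2$ neighbors in $V_2$, exclusion of degree-$\ge 3$ vertices of $V_2$ and of repeated $V_1$-vertices, all controlled by the choice of $R$), but there is a genuine gap: you never couple the \emph{labels}, and the lemma is a statement about the labeled objects $(\hat G_R,\sigma_R,\tau_R)$ and $(T_R,\hat\sigma_R,\hat\tau_R)$. All of your distributional computations use the uniform edge probability $p$, whereas edges appear with probability $\del p$ or $(2-\del)p$ depending on label agreement. The tree $T$ is a broadcast process: a type-$1$ node has $\pois(d^2\del/2)$ same-label and $\pois(d^2(2-\del)/2)$ opposite-label type-$2$ children, and the unique type-$1$ child of a type-$2$ node agrees with it with probability $\del/2$. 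Matching this requires showing that, conditioned on $\sigma(u_i)$ and on the empirical biases (the paper conditions on $\beta_1=O(n_1^{-1/3})$, $\beta_2=O(n_2^{-1/3})$), the counts of same-label and opposite-label degree-$2$ $V_2$-neighbors are within $O(n_1^{-1/3})$ in total variation of the two Poissons above, and that the second $V_1$-endpoint of a degree-$2$ vertex $v$ satisfies $\Pr[\sigma(w)=\tau(v)] = \del/2 + O(n_1^{-1/3})$. Your claim that this second endpoint is ``uniform on $V_1$ minus the revealed vertices'' is false in the labeled model (it is label-biased), and without the label coupling the lemma — whose entire purpose is to transfer the non-reconstruction result for the labeled broadcast tree — is not proved. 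These are exactly the computations ($p^{+,+}$, $p^{+,-}$, etc., and the $X_v$ coupling) that occupy the second half of the paper's argument.

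A secondary issue: your size bound $|T_R|\le n_1^{1/10}$ is not enough for the union bounds. The per-vertex probability that a revealed $V_2$-vertex has degree $\ge 3$ is $O(n_1^2p^2)=O(n_1/n_2)$, so you need the neighborhood size to be small compared with $n_2/n_1$ as well as with powers of $n_1$; when $n_2/n_1$ grows slowly (say $n_2=n_1\log n_1$), $n_1^{1/10}\cdot O(n_1 p)$ does not tend to $0$. The choice $R=\frac{1}{20d}\min\{\log n_1,\log(n_2/n_1)\}$ does yield the needed bound $|\hat G_R| = O\bigl(\min\{n_1^{1/8},(n_2/n_1)^{1/8}\}\bigr)$ (this is the paper's event $C_r$), but you have to state and use it in that form.
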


\begin{proof}
$T$ can be constructed by three sequences of independent random variables $Y_u^a \sim \pois(d^2\del/2 )$,  $Y_u^b \sim \pois(d^2 (2 -\del)/2)$, for $u$ of type $1$ in $T$, and $X_v \sim \text{Bernoulli}(\del/2)$ for $v$ of type $2$ in $T$.  To create the branching process, we start with the root $\rho$ of type $1$, and assign it $+1$ or $-1$ label at random.  We then assign it $Y_\rho^a$ type-$2$ children of the same label and $Y_\rho^b$ type-$2$ children of the opposite label. All together the number of children has a $\pois(d^2)$ distribution, and the labels are selected independently to agree with $\rho$ with probability $\del /2$ and to disagree with probability $1- \del /2$.  Now each child $v$ of type $2$ has exactly one child of its own, whose label agrees if $X_v =1$ and disagrees otherwise. Then the process continues inductively. 

Now consider exploring the depth $R$ neighborhood of $\rho$ in $\hat G$. We index the vertices from $V_1$ in the order in which we encounter them in this breadth-first exploration.    We explore two layers of the neighborhood at once: the active vertex $u_i$ will always be from $V_1$.  To explore from $u_i$, we reveal all edges from $u_i$ to unexplored vertices in $V_2$; call these neighbors $N(u_i)$.  We then set all $v \in N(u_i)$ to be explored, and query all edges from $N(u_i)$ to unexplored vertices in $V_1$; call these vertices $N^2(u_i)$, as they are all connected by a path of length $2$ to $u_i$ in $\hat G$. Set all vertices in $N^2(u_i)$ to explored, and place them in a FIFO queue of vertices.  Then set $u_i$ to dead, and take the next vertex $u_{i+1}$ from the queue, set to active, and repeat.    

Let $N_{aa}(u_i), N_{ab}(u_i), N_{ba}(u_i), N_{bb}(u_i)$ be the number of paths of length $2$ from $u_i$ to an unexplored vertex in $V_1$, with the subscripts denoting whether the labels along the path agree or disagree with the label of $u_i$; e.g. if $\sigma (u_i) =+1$, then $N_{ba}(u_i)$ is the number of paths that go through a vertex $v \in V_2$ with label $-1$ and then to an unexplored vertex $w \in V_1$ with label $+1$. Let $V_1^+(i), V_1^-(i)$ be the number of unexplored vertices in $V_1$ with the respective labels at the moment $u_i$ becomes active, and likewise for $V_2^+(i), V_2^-(i)$.   If we condition on the bias of $\sigma$ and $\tau$, $\displaystyle \beta_1 = \frac{1}{n_1} \sum_{u \in V_1} \sigma(u)$ and $\displaystyle \beta_2 = \frac{1}{n_2} \sum_{v \in V_2} \tau(v)$, then at each step of the exploration, the distribution of the $N_{**}$'s depends only on the $V_i^*$'s. 

As in \cite{mossel2012stochastic}, let $A_r$ be the event that no vertex in $\overline V_{r-1}$ has more than one neighbor in $\hat G_{r-1}$; let  $B_r$ be the event that no vertex in $\partial ^2 \hat G_r$ has more than one neighbor in $\partial ^1 \hat G_r$, also define the event $D_r = \{ d(v) \le 2 \text{ for all } v \in \partial^2 \hat G_r   \}$.  Then analogously to Lemma 4.3 in \cite{mossel2012stochastic}, we have
\begin{lemma}
\label{lem43}
If 
\begin{enumerate}
\item $( \hat G_{r-1}, \sigma_{r-1}, \tau_{r-1}) = (T_{r-1}, \hat \sigma_{r-1}, \hat \tau_{r-1})$.
\item For every $u \in \partial^1 \hat G_{r-1}$, $N_{aa}(u) + N_{ab}(u)= Y_u^a = \emph{\pois}(d^2\delta/2)$; $N_{ba}(u) + N_{bb}(u)= Y_u^b = \emph{\pois}(d^2(2-\delta)/2)$.
\item For every $u \in \partial^1 \hat G_{r-1}$, 
\[ \sum_{\substack{v\in \partial^2 \hat G_{r-1}, v \sim u \\ \sigma(u) = \sigma(v)}} X_v = N_{aa}(u) \]
\[ \sum_{\substack{v\in \partial^2 \hat G_{r-1}, v \sim u \\ \sigma(u) \ne \sigma(v)}} X_v = N_{bb}(u) \]

\item $A_r, B_r, D_r$ hold.
\end{enumerate}
Then  $( \hat G_{r}, \sigma_{r}, \tau_{r}) = (T_{r}, \hat \sigma_{r}, \hat \tau_{r})$.
\end{lemma}  

\noindent
Next we define $C_r = \{| \partial \hat G_s | \le 4^s d^{2s} \min \{ \log n_1, \log (n_2/n_1)  \} , \forall s \le r+1   \} $.Then,
\begin{lemma}
\label{lem:4445}
Whp, $A_r, B_r, C_r$, and  $D_r$ hold for all $1 \le r \le R$ and \\ $|\hat G_R| = O( \min \{ n_1^{1/8}, (n_2/n_1)^{1/8} \} ) $.  
\end{lemma}

\begin{proof}
As in Lemma 4.4 from \cite{mossel2012stochastic}, stochastic domination and a Chernoff bound show that $A_r, B_r, C_r$ hold whp: the distribution of $N^2(u_i)$ is dominated by a $\text{Bin}(n_1, 4 d^2/n_1)$.  If $v \in V_2$ is revealed to be a neighbor of $u_i$, then the probability it has at least $2$ additional neighbors in $V_1$ is bounded by $O(n_1^2 p^2) = O( n_1/n_2)$.  Given that $| \hat G_R| = O( \min \{ n_1^{1/8}, (n_2/n_1)^{1/8} \} ) $, a union bound gives that $D_r$ holds for all $1 \le r \le R$ whp.  
\end{proof}

\noindent
Finally, we  complete the proof of Lemma \ref{lem:Rdepth}.
%\begin{lemma}
%\label{lemcouple}
%\[ \Pr[(G_{r}, \sigma_{r}, \tau_{r}) = (T_{r}, \hat \sigma_{r}, \hat \tau_{r}), C_r   | (G_{r-1}, %\sigma_{r-1}, \tau_{r-1}) = (T_{r-1}, \hat \sigma_{r-1}, \hat \tau_{r-1}), C_{r-1}  ] = 1- O( ) \]
%\end{lemma}
Condition on the event that $\beta_1 = O(n_1^{-1/3})$ and $\beta_2 = O(n_2^{-1/3})$, which occurs with probability $\ge 1- \exp(-\Theta(n_1^{1/3}))$.  Condition also on the event that the number of edges incident to all explored vertices in $V_1$ is at most $2 n_2 p \min \{ n_1^{1/8}, (n_2/n_1)^{1/8} \}$.  Under these two events we have $|V_1^+(i)|, |V_1^-(i)| = n_1/2 (1+ O(n_1^{-1/3}))$ and $|V_2^+(i)|, |V_2^-(i)| = n_2/2(1+O(n_2^{-1/3}))$ for all $1 \le i \le R$.  
\\

\noindent
For $u_i \in V_1$, the distribution of the number of its unexplored neighbors of degree $2$ and label $+1$ is $\text{Bin}(|V_2^+(i)|, p^{\sigma(u_i),+}  )$, and the distribution of the number of its unexplored neighbors of degree $2$ and label $-1$ is $\text{Bin}(|V_2^-(i)|, p^{\sigma(u_i),-}  )$
where 
\begin{align*}
p^{ +,+} &= |V_1^+(i)| \del^2 p^2 (1-\del p)^{|V_1^+(i)|-1} (1 -(2-\del)p)^{|V_1^-(i)|} \\
& + |V_1^-(i)| \del (2-\del) p^2 (1-\del p)^{|V_1^+(i)|} (1 -(2-\del)p)^{|V_1^-(i)|-1} \\
&= n_1 \del p^2 (1+ O(n_1^{-1/3})) 
\end{align*}
\noindent
and likewise 
\begin{align*}
p^{-,-} &= n_1 \del p^2 (1+ O(n_1^{-1/3})) \\
p^{+,-}, p^{-,+} &= n_1 (2-\del)p^2 (1+ O(n_1^{-1/3})).
\end{align*}

\noindent
From Lemma 4.6 in \cite{mossel2012stochastic}, we then have that for $\sigma(u) \in \{ \pm 1 \}$,
\begin{align*}
&\|  \text{Bin}(|V_2^{\sigma(u)}(i)|, p^{\sigma(u),\sigma(u)})  - \pois (d^2 \del /2)  \|_{TV} = O( n_1^{-1/3})  \\
 &\|  \text{Bin}(|V_2^{-\sigma(u)}(i)|, p^{\sigma(u),-\sigma(u)})  - \pois (d^2 (2-\del) /2)  \|_{TV} = O( n_1^{-1/3}). 
\end{align*}

\noindent
Since we have $| \hat G_R| \le n_1^{1/8}$ whp,  a union bound over $r = 1,\ldots, R$ shows that there is a coupling so that whp for all $r \le R$ and every $u \in \partial^1 \hat G_{r-1}$, $N_{aa}(u) + N_{ab}(u)= Y_u^a$; $N_{ba}(u) + N_{bb}(u)= Y_u^b$.
\\
%Next we show that conditioned on on $N_{aa} + N_{ab} = k$, $N_{aa}$ is close in distribution to a $\text{Bin}(k,\del/2)$, and likewise conditioned on $N_{ba}+N_{bb} =k$, $N_{bb}$ is close to  $\text{Bin}(k,\del/2)$.  

\noindent
Next, we show that the probability the second neighbor of a vertex of degree $2$ in $V_2$ has the same label is close to $\del/2$.  Let $u_i$ be the current active vertex, $v$ a neighbor of $u_i$ of degree $2$, and $w$ the second unexplored neighbor of $v$.  Then
\begin{align*}
\Pr [ \sigma(w) = \tau(v) | u_i \sim v, d(v) =2 ] & =   \del/2 + O(n_1^{-1/3}).
\end{align*}

This shows that the coupling can be extended to the $X_v$'s, and that whp under this coupling $(\hat G_{R}, \sigma_{R}, \tau_{R}) = (T_{R}, \hat \sigma_{R}, \hat \tau_{R}) $.

\end{proof}

Let $V_2^{(0)}$ and $V_2^{(1)}$ be the subsets of $V_2$ of degree $0$ and $1$ respectively. Let $V_2^{(\ge 2)}$ be the vertices of $V_2$ of degree at least $2$.  Recall that  $\hat G$ is the subgraph of $G$ induced by $V_1$ and $V _2^{(\ge 2)}$.  From $\hat G$ we can determine the set $V_2^{(\le 1)} = V_2^{(0)} \cup V_2^{(1)}$ but not the two sets individually.

The following is an analogue of Lemma 4.7 in \cite{mossel2012stochastic}.  It says that conditioned on the labels at depth $R$ in $\hat G$ from the root $\rho$, neither the graph outside the $R$-neighborhood, nor the vertices in $V_2^{(\le 1)}$ contain significant information about the label of $\rho$. 
\begin{lemma}
\label{lem:47}
Let $A, B, C $ be a partition of $ V_1 \cup V_2^{(\ge 2)}$  so that $B$ separates $A$ and $C$ in $\hat G$. Assume $| A \cup B | = o(\sqrt{n_1}) $. Then 
\[ \Pr [ \sigma_A, \tau_A |  \sigma_{B \cup C}, \tau _{B \cup C}, G]  =(1+o(1)) \Pr[\sigma_A, \tau_A |  \sigma_{B}, \tau _{B}, \hat G  ]   \]
whp over $G, \sigma$ and $\tau$. 
\end{lemma}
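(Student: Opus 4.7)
My plan is to adapt the proof of Lemma 4.7 in \cite{mossel2012stochastic} to the bipartite setting, handling both sets of labels and the additional structure contributed by $V_2^{(\le 1)}$. Using Bayes' rule and the uniform prior on $(\sigma,\tau)$, I would rewrite both conditional probabilities as normalized likelihood ratios, the first as $\Pr[\sigma_A,\tau_A\mid \sigma_{B\cup C},\tau_{B\cup C},G]\propto \Pr[G\mid \sigma,\tau]$, and the second as $\Pr[\sigma_A,\tau_A\mid \sigma_B,\tau_B,\hat G]\propto \sum_{\sigma_C,\tau_C}\Pr[\hat G\mid \sigma,\tau]$, where I marginalize out the labels in $C$ and on $V_2^{(\le 1)}$ that are no longer conditioned upon. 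The goal reduces to showing that the $(\sigma_A,\tau_A)$-dependence of these two expressions agrees up to a $1+o(1)$ factor whp.

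The likelihood $\Pr[G\mid \sigma,\tau]$ factors over all pairs $(u,v)\in V_1\times V_2$, contributing $p_{uv}\in\{\del p,(2-\del)p\}$ for an edge or $1-p_{uv}$ for a non-edge. I would split the factors into three groups: (I) pairs with at least one endpoint in $A\cup B$ and both endpoints in $V_1\cup V_2^{(\ge 2)}$, (II) pairs between $A$ and $C$ inside $V_1\cup V_2^{(\ge 2)}$, and (III) pairs incident to $V_2^{(\le 1)}$. Since $B$ separates $A$ from $C$ in $\hat G$, group (II) contains only non-edges. Group (I) appears identically in both likelihood expressions, so the work is to bound the $\sigma_A,\tau_A$-dependence of groups (II) and (III).

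For group (II), flipping a single label $\sigma(u)$ with $u\in A\cap V_1$ multiplies the non-edge product by $\bigl((1-(2-\del)p)/(1-\del p)\bigr)^{k_+-k_-}$, where $k_\pm$ counts $v\in C\cap V_2^{(\ge 2)}$ with $\tau(v)=\pm\sigma(u)$. Since $\tau$ is uniform, a Chernoff bound gives $|k_+-k_-|=O(\sqrt{|C|})$ whp, and each per-pair log factor is $O(p)$, so the total log-change is $O(\sqrt{n_2}\cdot p)=O(n_1^{-1/2})$; summed over all $|A|=o(\sqrt{n_1})$ possible flips this is $o(1)$, with an analogous bound for $\tau_A$ flips handled via $u\in A\cap V_2^{(\ge 2)}$. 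Hence group (II) is asymptotically independent of $(\sigma_A,\tau_A)$, and its common factor cancels in the normalization. For group (III), marginalizing the unconditioned labels $\tau|_{V_2^{(\le 1)}}$ yields a product whose $\sigma_A$-dependence is controlled by the number of edges from $A\cap V_1$ to $V_2^{(1)}$; this has expectation $o(1)$ by a first-moment calculation using $|A|=o(\sqrt{n_1})$ together with the sparsity $|V_2^{(1)}|\cdot p/n_1 = O(n_1^{-1/2})$. The main obstacle will be achieving uniformity simultaneously at the true labels and across the marginalization sums: I expect either a Doob-martingale / moment-generating-function argument as in \cite{mossel2012stochastic} to control all $2^{|A|}$ flip sequences together, or a direct argument that the denominator is dominated by configurations close to the true labels in Hamming distance, which should be feasible given $|A|=o(\sqrt{n_1})$.
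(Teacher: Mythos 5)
Your overall framing---rewriting both conditional probabilities as normalized likelihoods, using the separation of $A$ and $C$ so that the cross factors are pure non-edge products, and exploiting small bias of the labels---is the same route the paper takes (it runs the computation of Lemma 4.7 of \cite{mossel2012stochastic} with extra factors accounting for $V_2^{(\le 1)}$). The genuine gap is in your group (III). You claim that after marginalizing $\tau$ on $V_2^{(\le 1)}$ the dependence on $\sigma_A$ is controlled by the number of edges from $A\cap V_1$ to $V_2^{(1)}$ and dispose of it by a first-moment bound. This misses the dominant effect, which enters through \emph{non-edges}: for a vertex $v$ of degree $0$ or $1$, averaging over $\tau(v)$ produces a factor depending on all of $\sigma$ through the global bias $\beta_1=\frac{1}{n_1}\sum_{u\in V_1}\sigma(u)$, to second order proportional to $1+\tfrac{(\del-1)^2 n_1^2p^2}{2}\beta_1^2$. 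One such factor is innocuous, but there are $\Theta(n_2)$ vertices in $V_2^{(\le 1)}$, and changing $\sigma_A$ moves $\beta_1$ by up to $2|A|/n_1$, so the aggregate log-likelihood shift is of order $n_1n_2p^2\,|\beta_1|\,|A|=\Theta(d^2|\beta_1|\,|A|)$ plus $\Theta(d^2|A|^2/n_1)$. With the whp bound $|\beta_1|=O(\sqrt{\log n_1/n_1})$ and $|A|=o(\sqrt{n_1})$ this is only $o(\sqrt{\log n_1})$, not $o(1)$; moreover the two sides of the lemma condition on different information about $V_2^{(\le 1)}$ (which vertices have degree $0$ versus $1$ and where the degree-one edges attach, versus only the event that all these degrees are at most $1$), so one must compare two such bias-dependent products. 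The paper's resolution is exactly its Lemmas~\ref{lem:degree01} and~\ref{lem:qm}: expand each per-vertex factor in $\beta_1$, use that $\beta_1$ is essentially determined by the conditioned labels (this is where $|A\cup B|=o(\sqrt{n_1})$ enters), and show the quadratic terms cancel in the ratio, leaving errors $(1+O(\beta_1^4n_1^4p^4))^{O(n_2)}=1+o(1)$. An edge-count first moment cannot substitute for this: even when $A$ has no edges to $V_2^{(1)}$, the group (III) contribution still depends on $\sigma_A$.

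A secondary point, which you flag but do not resolve, is uniformity in the marginalization over $C$. Your Chernoff bound for group (II) controls the bias of the \emph{true} $\tau_C$, which suffices on the $G$-side where the $C$-labels are fixed; but on the $\hat G$-side the sums range over all $\eta_C,\phi_C$, and a bound uniform over all of them fails (the bias of $\phi_C$ can be of order $n_1$, and $|A|\cdot n_1\cdot p$ need not be $o(1)$). The paper avoids this by organizing the chain of identities so that the $A$--$C$ and $Q^-$ factors are first replaced, via estimate (2) of \cite{mossel2012stochastic} and Lemma~\ref{lem:qm}, by quantities independent of the $A$-labels, which then cancel between numerator and denominator. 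Your proposal names plausible tools for this step (an MGF/martingale argument, or showing the sums are dominated by labelings near the truth) but carries out neither, so as written the argument is incomplete at both of these points.
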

\noindent
We delay the proof of Lemma~\ref{lem:47} to the Appendix.
\\

\noindent
Now, we can finish the proof of Theorem~\ref{thm:noDetect}. By the monotonicity of conditional variance,
\[ \var(\sigma(\rho) |G, \sigma(w), \sigma_{\partial^1 \hat G_R}, \tau_{\partial^2 \hat G_R}) \le \var(\sigma(\rho) | G, \sigma(w)).\]
 Then whp $w \notin \hat G_R$, and so by Lemma \ref{lem:47}, 
\[\var(\sigma(\rho) |G, \sigma(w), \sigma_{\partial^1 \hat G_R}, \tau_{\partial^2 \hat G_R}) \to \var(\sigma(\rho) | G, \sigma_{\partial^1 \hat G_R}, \tau_{\partial^2 \hat G_R}) \] (since $\sigma(\rho)$ and $\sigma(w)$ are independent given $G, \sigma_{\partial^1 \hat G_R}, \tau_{\partial^2 \hat G_R}$).
By Lemma \ref{lem:Rdepth}, 
\[ \left | \var(\sigma(\rho) | G, \sigma_{\partial^1 \hat G_R}, \tau_{\partial^2 \hat G_R}) - \var( \hat \sigma(\rho) | T, \hat \sigma_{\partial^1 T_R}, \hat \tau_{\partial^2 T_R})  \right | \to 0. \]
From the results of \cite{evans2000broadcasting} and the condition $p \le \frac{1}{(\del-1)^2 \sqrt{n_1 n_2} }$,  
\[ \var( \hat \sigma(\rho) | T, \hat \sigma_{\partial^1 T_R}, \hat \tau_{\partial^2 T_R}) = 1-o(1) .\]
Thus, whp 
 \[ \var(\sigma(\rho) | G, \sigma_{\partial^1 \hat G_R}, \tau_{\partial^2 \hat G_R})  =1 -o(1) \]
  as well.  %This implies that no algorithm can detect any correlation between the labels of $\rho$ and $w$ and in particular proves Theorem~\ref{thm:noDetect}.
This implies that the labels of $\rho$ and $w$ are asymptotically independent and in particular proves Theorem~\ref{thm:noDetect}.

\section{Proof of Theorem \ref{thm:main}: Recovery}

We will follow a similar framework to prove both parts of Theorem \ref{thm:main}. Recalling $M$ to be the adjacency matrix, let $B = M M^T - \text{diag}(M M^T)$ and  $D_V = \text{diag}(M M^T)$.

%$B = M M^T - \text{diag}(M M^T) +  n_2 p^2(\del^2 -2 \del +2) \cdot I $ and $D_V = \text{diag}(M M^T)$. 

A simple computation shows that the second eigenvector of $\E B$ is the vector $\sigma$ that we wish to recover; we will consider the different perturbations of $\E B$ that arise with the three spectral algorithms and show that at the respective thresholds, the second eigenvector of the resulting matrix is close to $\sigma$. To analyze the diagonal deletion SVD, we  must show that the second eigenvector of $B$ is highly correlated with $\sigma$ (the addition of a constant multiple of the identity matrix does not change the eigenvectors). The main step is to bound the spectral norm $\| B - \E B \|$.  Since the entries of $B$ are not independent, we will decompose $B$ into a sequence of matrices based on subgraphs induced by vertices of a given degree in $V_2$.  This (Lemma \ref{lem:norms}) is the most technical part of the work. 

To analyze the unmodified SVD, we write $M M^T = \E B + (B - \E B)  + \E D_V + (D_V - \E D_V)$.
%and $\hat M \hat M ^T = \E B + (B - \E B) - n_2 p^2(\del^2 -2 \del +2) \cdot I + \E D_E + (D_E - \E D_E)$.  
The left singular vectors of $M$ are the eigenvectors of $MM^T$. $\E B$ has $\sigma $ as its second eigenvector and $\E D_V$ is a multiples of the identity matrix and so adding it does not change the eigenvectors. As above we bound $\| B - \E B \|$ and what remains is showing that the difference of the matrix $D_V$ with its expectation has small spectral norms at the respective thresholds; this involves simple bounds on the fluctuations of independent random variables.

We will assume that $\sigma$ and $\tau$  assign $+1$ and $-1$ labels to an equal number of vertices; this allows for a clearer presentation, but is not necessary to the argument.  We will treat $\sigma$ and $\tau$ as unknown but fixed, and so expectations and probabilities will all be conditioned on the labelings.

The main technical lemma is the following:

\begin{lemma}
\label{lem:norms}
Define $B, D_V$ as above. Assume $n_1, n_2,$ and $p$ are as in Theorem~\ref{thm:main}.  Then there exists an absolute constant $C$ so that
\begin{enumerate}
\item $\E B = \lam_1 J/n_1 + \lam_2 \sigma \sigma^T /n_1$, with $\lam_1 = n_1 n_2 p^2$ and $\lam_2 = (\del-1)^2 n_1 n_2 p^2$, where $J$ is the all ones $n_1 \times n_1 $ matrix.  
\item For $p \ge n_1^{-1/2} n_2 ^{-1/2} \log n_1$, $\| B - \E B \| \le C n_1^{1/2} n_2^{1/2} p$ whp.
\item $\E D_V$ is a multiple of the identity matrix.
\item For $p \ge n_1^{-2/3} n_2^{-1/3} \log n_1$, $\| D_V - \E D_V \| \le C \sqrt{ n_2 p \log n_1}$ whp.
%\item For $p \ge n_1^{-1/2} n_2^{-1/2} \log n_1$, $\| D_E - \E D_E \| \le C \sqrt{ n_1 n_2 p^2 \log n_1}$ whp.
\end{enumerate}

\end{lemma}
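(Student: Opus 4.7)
The plan is to dispatch Parts 1, 3, and 4 with short direct calculations and devote the real work to Part 2. For Part 1, start from $\E M_{iv} = p + p(\del - 1)\sigma(i)\tau(v)$, immediate from the two-valued distribution of $M_{iv}$. Combining independence across entries with $\sum_v \tau(v) = 0$, $\sum_v \tau(v)^2 = n_2$ under the balanced labeling assumption yields $(\E B)_{ij} = n_2 p^2 + (\del-1)^2 n_2 p^2\,\sigma(i)\sigma(j)$ for $i\ne j$, which identifies $\E B$ with $\lam_1 J/n_1 + \lam_2 \sigma\sigma^T/n_1$ up to an $O(n_2 p^2)$ diagonal correction absorbed into lower order. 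The same computation gives $(\E D_V)_{ii} = n_2 p$ independent of $i$, which is Part 3. For Part 4, each diagonal entry of $D_V$ is a sum of $n_2$ independent Bernoullis of total mean $n_2 p$ and variance at most $n_2 p$; a Bernstein bound gives $|(D_V - \E D_V)_{ii}| \le C\sqrt{n_2 p \log n_1}$ with probability $\ge 1 - n_1^{-2}$ whenever $n_2 p \gg \log n_1$ (satisfied by our hypothesis), and a union bound over $i$ finishes because $\|D_V - \E D_V\|$ equals the largest absolute diagonal entry.

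Part 2 is the technical core. The entries of $B$ are not independent---$B_{ij}$ and $B_{ik}$ both depend on the row $M_{i,\cdot}$---so element-wise concentration is unavailable. My plan is to decompose $B - \E B$ along the columns of $M$, which \emph{are} independent:
\[ B - \E B = \sum_{v \in V_2} Y_v, \qquad Y_v := \bigl(m_v m_v^T - \mathrm{diag}(m_v m_v^T)\bigr) - \E\bigl[m_v m_v^T - \mathrm{diag}(m_v m_v^T)\bigr], \]
where $m_v$ is the $v$-th column of $M$. Each $Y_v$ is essentially the zero-diagonal adjacency of a clique on $N(v)\subseteq V_1$ minus its mean, with $\|Y_v\|$ bounded by $\deg(v)$, which is highly non-uniform. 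To apply concentration uniformly I would further stratify by degree: let $S^{(d)} = \{v \in V_2 : \deg(v) = d\}$ and write $B = \sum_{d\ge 2} B^{(d)}$ with $B^{(d)} = \sum_{v \in S^{(d)}}(m_v m_v^T - \mathrm{diag}(m_v m_v^T))$. Since $\E |S^{(d)}| \asymp n_2 (n_1 p)^d/d!$ and $n_1 p = o(1)$ throughout the regime $n_2 \ge n_1 \log^4 n_1$, $p \ge n_1^{-1/2} n_2^{-1/2}\log n_1$, whp the maximum degree in $V_2$ is bounded by some $D = O(\log n_1/\log\log n_1)$ and the sum truncates at $d \le D$.

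For each fixed $d$, $B^{(d)}$ is a sum of independent rank-one PSD matrices of norm at most $d$ (once one sorts out the conditioning on which vertices land in $S^{(d)}$). The $d = 2$ layer dominates: $B^{(2)}$ is, up to the planted sign structure, the adjacency matrix of an inhomogeneous sparse random graph on $V_1$ with edge density $\Theta(n_2 p^2)$, and its noise norm can be controlled by a Feige--Ofek-type bound to give $\|B^{(2)} - \E B^{(2)}\| = O(\sqrt{n_1 n_2}\,p)$, matching the claim. Contributions from $d \ge 3$ shrink by a factor $n_1 p \ll 1$ per additional degree, so the tail sum over $3 \le d \le D$ is of lower order. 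The main obstacle is obtaining the optimal $\sqrt{n_1 n_2}\,p$ scaling without spurious polylogarithmic losses all the way down to $p \asymp \log n_1/\sqrt{n_1 n_2}$; a direct matrix Bernstein bound on $\sum_v Y_v$ loses a factor of $\sqrt{\log n_1}$, so the proof likely needs the sharper graph-spectral inequality for the $d=2$ layer (or a direct trace-moment computation), while still uniformly dispatching the higher-degree layers.
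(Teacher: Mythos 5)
Your overall strategy coincides with the paper's: Parts 1, 3, 4 by direct computation plus Chernoff bounds (your Part 1 computation is correct, with the same harmless $O(n_2p^2)$ diagonal discrepancy the paper itself glosses over), and for Part 2 a stratification of the columns of $M$ by the degree of the corresponding vertex of $V_2$, with the degree-$2$ layer carrying the main term of order $\sqrt{n_1 n_2}\,p$ and higher degrees lower order. So this is not a different route; the question is whether the one step you defer is actually supplied, and it is not.

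The gap is exactly at that step. Conditioned on which vertices of $V_2$ have degree $2$, the layer $B^{(2)}$ is not an inhomogeneous Bernoulli random graph on $V_1$: it is a multigraph obtained by throwing a random (binomial) number of edges down independently, uniformly with replacement and with label-dependent weights; and for $d\ge 3$ the $\binom{d}{2}$ edges produced by a single vertex of $V_2$ are strongly dependent (they form a clique). Consequently a Feige--Ofek-type bound does not apply off the shelf, and ``once one sorts out the conditioning'' is precisely the technical content of the lemma rather than a remark. The paper bridges this as follows: (i) each degree-$d$ clique is randomly split into $\binom{d}{2}$ single-edge pieces, so that each resulting sub-layer consists of i.i.d.\ uniformly placed edges; (ii) each sub-layer is coupled to a matrix with independent Poisson entries (Poissonization), with the coupling error controlled by total-variation estimates and row-sum/Chernoff bounds; (iii) a spectral-norm bound for sparse centered matrices with independent Bernoulli/Poisson entries (an extension of Vu's lemma, the paper's Lemma~\ref{lem:binomialNorm}) is applied to each Poissonized sub-layer, with no spurious $\sqrt{\log}$ factor for the dominant $d=2$ piece; and (iv) all layers with degree at least $i_0$ (where the expected per-row count drops below $\log n_1$) are handled collectively by a Chernoff bound on row sums, contributing $O(\log n_1)\le C\sqrt{n_1n_2}\,p$ --- note your plan would also need such an aggregated argument for the very sparse layers, where $\sqrt{n_1 q_d}$-type bounds are no longer valid. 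Your alternative of a direct trace-moment computation for $B^{(2)}$ could in principle replace (ii)--(iii), but as written the proposal asserts the key estimate $\|B^{(2)}-\E B^{(2)}\|=O(\sqrt{n_1n_2}\,p)$ rather than proving it, and this is the heart of Part 2: a $\sqrt{\log n_1}$ loss there is fatal at the threshold $p\asymp \log n_1/\sqrt{n_1n_2}$, since it is precisely what separates the Diagonal Deletion SVD guarantee from a weaker one.
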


This is proved in Appendix \ref{sec:proveBBound}.

 We also will use the following lemma from \cite{lelarge2013reconstruction} to round a unit vector with high correlation with $\sigma$ to a $\pm 1$ vector that denotes a partition: 
%\begin{lemma}[Lemma 1, Lelarge, Massouli{\'e}, and Xu \cite{lelarge2013reconstruction}]
\begin{lemma}[\cite{lelarge2013reconstruction}]
\label{lem:lelarge2}
For any $x \in \{-1,+1\}^n$ and $y\in \mathbb{R}^n$ with $\|y\|=1$ we have 
$$d(x,\text{sign}(y)) \le n \left \| \frac{x}{\sqrt{n}} - y \right \| ^2,$$
where $d$ represents the Hamming distance.
\end{lemma}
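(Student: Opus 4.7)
The plan is to reduce the inequality to a coordinate-wise bound and then sum. Both $d(x, \mathrm{sign}(y))$ and $\|x/\sqrt{n} - y\|^2$ decompose as sums over the $n$ coordinates, so it suffices to establish, for each $i \in \{1, \dots, n\}$,
$$\mathbf{1}[x_i \ne \mathrm{sign}(y_i)] \le n \left( \frac{x_i}{\sqrt{n}} - y_i \right)^2,$$
and then add these inequalities over $i$.

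To verify the coordinate-wise statement I would split into cases. If $x_i = \mathrm{sign}(y_i)$ the left-hand side is $0$ and the inequality is trivial. Otherwise $x_i$ and $y_i$ have opposite signs (or $y_i = 0$), so $x_i y_i \le 0$ and therefore
$$\left| \frac{x_i}{\sqrt{n}} - y_i \right| = \frac{1}{\sqrt{n}} + |y_i| \ge \frac{1}{\sqrt{n}}.$$
Squaring and multiplying by $n$ makes the right-hand side at least $1$, matching the indicator. This is the only real observation needed: a sign disagreement forces the coordinate discrepancy $x_i/\sqrt{n} - y_i$ to be at least $1/\sqrt{n}$ in absolute value, exactly calibrated to the factor of $n$ in the claim.

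Summing the coordinate-wise inequality over $i$ then yields
$$d(x, \mathrm{sign}(y)) = \sum_{i=1}^n \mathbf{1}[x_i \ne \mathrm{sign}(y_i)] \le n \sum_{i=1}^n \left( \frac{x_i}{\sqrt{n}} - y_i \right)^2 = n \left\| \frac{x}{\sqrt{n}} - y \right\|^2.$$
A minor remark: the hypothesis $\|y\|=1$ is not actually used in the argument, and the convention for $\mathrm{sign}(0)$ does not matter since the case $y_i=0$ produces the inequality with equality. There is no substantive obstacle — the lemma is a direct sign-comparison whose whole proof fits in a single paragraph, and the only real content is noticing the $1/\sqrt{n}$ coordinate-wise separation induced by a mismatch.
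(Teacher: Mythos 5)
Your proof is correct: the coordinate-wise observation that a sign mismatch forces $\bigl|x_i/\sqrt{n}-y_i\bigr| \ge 1/\sqrt{n}$, followed by summing over coordinates, is exactly the standard argument for this inequality, and you rightly note that the hypothesis $\|y\|=1$ plays no role. The paper itself gives no proof of this statement --- it is quoted from \cite{lelarge2013reconstruction} --- so there is nothing to contrast with; your one-paragraph argument is a complete and self-contained justification of the lemma as stated.
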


The next lemma is a classic eigenvector perturbation theorem.  Denote by $P_{A}(S)$  the orthogonal projection onto the subspace spanned by the eigenvectors of $A$ corresponding to those of its eigenvalues that lie in $S$.

\begin{lemma}[\cite{davis1970rotation}]
\label{lem:daviskahan}

Let $A$ be an $n \times n$ symmetric matrix with $|\lam_1| \ge | \lam_2 | \ge \dots $, with $|\lam _k| - |\lam _{k+1}| \ge 2 \del$. Let $B$ be a symmetric matrix with $\| B \| < \del$. Let $A_k$ and $(A+B)_k$ be the spaces spanned by the top $k$ eigenvectors of the respective matrices. Then
\[ \sin ( A_k, (A+B)_k)= \| P_{A_k} - P_{(A+B)_k} \| \le \frac{ \| B \|}{ \del }    \]
%In particular, for every unit vector $u \in (A+B)_k$, there is a unit vector $v \in A_k$ with $\| u - v \| \le  \| B\| / \del$. 
In particular, If $|\lam_1| - |\lam_2| \ge 2 \del$, $|\lam_2| - |\lam_3| \ge 2 \del$, $\| B \| < \del$, and $e_2(A)$, $e_2(A+B)$ are the second (unit) eigenvectors of $A$ and $A+B$, respectively, satisfying $e_2(A) \cdot e_2(A+B)\ge 0$, then $\| e_2(A) - e_2(A+B) \| \le \frac{4 \| B \| }{\del}$. 
\end{lemma}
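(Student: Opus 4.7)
The plan is to follow the classical Sylvester-equation approach of the original \cite{davis1970rotation} paper. Set $P = P_{A_k}$ and $Q = P_{(A+B)_k}$, and write $P^\perp = I - P$, $Q^\perp = I - Q$. The symmetric matrix $P-Q$ decomposes as $PQ^\perp - P^\perp Q$, and its off-diagonal block structure (with respect to the splitting $\mathbb R^n = \mathrm{range}(P)\oplus\mathrm{range}(P^\perp)$) gives $\|P-Q\| = \max(\|PQ^\perp\|,\|P^\perp Q\|)$. Hence it suffices to bound each off-diagonal block separately.

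The engine is the Sylvester-type identity
\[ A\,(PQ^\perp) - (PQ^\perp)\,(A+B) = -P B Q^\perp, \]
which follows immediately from $AP = PA$ and $(A+B)Q^\perp = Q^\perp(A+B)$. The right-hand side has spectral norm at most $\|B\|$. By Weyl's inequality, the eigenvalues $\tilde\lambda_j$ of $A+B$ satisfy $|\tilde\lambda_j-\lambda_j|\le \|B\|<\delta$, so $|\tilde\lambda_{k+1}| \le |\lambda_{k+1}| + \|B\|$; combined with $|\lambda_k|-|\lambda_{k+1}|\ge 2\delta$, this forces the spectrum of $A$ restricted to $\mathrm{range}(P)$ and the spectrum of $A+B$ restricted to $\mathrm{range}(Q^\perp)$ to be separated by at least $\delta$ (when the signs agree the gap is exactly this; when they differ, the gap is only larger). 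Diagonalizing both operators in their respective eigenbases turns the Sylvester equation into entrywise division by factors of absolute value $\ge \delta$; the jump to a spectral-norm bound is done by the standard top-singular-vector quadratic-form argument: if $\sigma_1 = \|PQ^\perp\|$ with singular vectors $u,v$, then $\sigma_1\,|u^\top A u - v^\top(A+B)v| \le \|P B Q^\perp\|\le \|B\|$, and the bracket is at least $\delta$. This yields $\|PQ^\perp\|\le\|B\|/\delta$, and symmetrically for $\|P^\perp Q\|$.

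For the single-eigenvector corollary, the two-sided gap $|\lambda_1|-|\lambda_2|\ge 2\delta$ and $|\lambda_2|-|\lambda_3|\ge 2\delta$ lets us apply the first part with the rank-one projection $P = e_2(A)\,e_2(A)^\top$ (the second eigenvalue is isolated as a spectral singleton with gap $\delta$ to both neighbors). Setting $u = e_2(A)$, $v = e_2(A+B)$ with $u\cdot v\ge 0$, direct computation gives $\|u-v\|^2 = 2-2(u\cdot v)$ and $\|uu^\top - vv^\top\| = \sqrt{1-(u\cdot v)^2}$; the sign alignment keeps $u\cdot v\in[0,1]$, so $\|u-v\|\le \sqrt{2}\,\|uu^\top - vv^\top\|\le \sqrt{2}\,\|B\|/\delta$, well within the claimed $4\|B\|/\delta$.

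The step I expect to be the main obstacle is the passage from the componentwise Sylvester solution to the operator-norm bound in the magnitude-ordered setting. The quadratic-form argument requires $u^\top A u - v^\top(A+B)v$ to be uniformly bounded below by $\delta$, which in turn asks the convex hulls of the two restricted spectra to be interval-separated. Because we order eigenvalues by magnitude, $A|_{\mathrm{range}(P)}$ can contain both large positive and large negative values, whose convex hull may straddle the smaller-magnitude spectrum of $A+B$. In such pathological cases the clean bound needs the Riesz (contour-integral) representation $P = \frac{1}{2\pi i}\oint_\Gamma (zI-A)^{-1}\,dz$ together with the resolvent identity. In this paper's application, however, the lemma is invoked on matrices like $\E B$ plus small perturbation where the relevant top eigenvalues are non-negative (indeed $\E B$ is of the form $\lambda_1 J/n_1 + \lambda_2 \sigma\sigma^\top/n_1$ with $\lambda_1,\lambda_2>0$), so this subtlety does not arise and the quadratic-form argument applies directly.
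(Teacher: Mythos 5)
Your proposal is correct in substance but takes a different route from the paper, and it also proves more than the paper attempts. The paper treats the subspace bound $\|P_{A_k}-P_{(A+B)_k}\|\le \|B\|/\delta$ as a citation to Davis--Kahan and only proves the ``in particular'' statement, by applying that bound twice (with $k=1$ and $k=2$): first $\|P_{u_1}-P_{v_1}\|\le\epsilon$ gives $\|u_1-v_1\|\le 2\epsilon$ via the triangle inequality, and then the $k=2$ bound applied to $v_2$ gives $\|u_2-v_2\|\le 4\epsilon$. You instead re-derive the sin-theta bound itself (Sylvester equation, Weyl for magnitude-ordered eigenvalues, quadratic-form argument) and get the corollary from the rank-one projector identity $\|uu^\top-vv^\top\|=\sqrt{1-(u\cdot v)^2}$ together with $\|u-v\|\le\sqrt{2}\,\|uu^\top-vv^\top\|$ for sign-aligned unit vectors. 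Your corollary route is cleaner and gives a better constant; the paper's route is shorter because it never opens the black box.

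Two caveats. First, the step where you ``apply the first part with the rank-one projection $P=e_2(A)e_2(A)^\top$'' is not literally an instance of the stated first part, which only concerns the top-$k$ spaces; you are implicitly invoking the general Davis--Kahan theorem for a spectral singleton isolated on both sides. This is easily repaired inside the stated lemma: write $e_2(A)e_2(A)^\top=P_{A_2}-P_{A_1}$ and similarly for $A+B$, so $\|uu^\top-vv^\top\|\le \|P_{A_2}-P_{(A+B)_2}\|+\|P_{A_1}-P_{(A+B)_1}\|\le 2\|B\|/\delta$, and then $\|u-v\|\le 2\sqrt{2}\,\|B\|/\delta\le 4\|B\|/\delta$, which is exactly the claimed bound. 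Second, your proof of the general first part is incomplete in precisely the case you flag: with magnitude ordering, the quadratic-form lower bound $u^\top Au - v^\top(A+B)v\ge\delta$ can fail when the top-$k$ spectrum of $A$ contains both signs, and you defer the fix to a contour-integral argument without carrying it out. Since the restricted spectra here are separated in the form ``one set inside an interval, the other outside its $\delta$-enlargement,'' the classical Davis--Kahan theorem does apply with constant $1$, but establishing that is genuine work; the paper sidesteps it entirely by citation, so this gap does not affect what the paper actually needs, though it should be acknowledged as a citation rather than presented as a proof sketch.
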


\begin{proof}  
In the particular case, let $u_1, u_2$ be the first two eigenvectors of $A$, and $v_1, v_2$ the first two eigenvectors of $A+B$, with signs chosen so that $u_1 \cdot v_1, u_2 \cdot v_2 \ge 0$. Let $\eps =\|B\|/\del$.  First we apply the lemma with $k=1$ to get $\| P_{u_1} - P_{v_1}\| \le \eps$.  Applying this to $v_1$, we get $\| P_{u_1} v_1 - v_1 \| \le \eps$, and so $\| P_{u_1} v_1 \| \ge 1- \eps$. The triangle inequality gives $\| u_1 - v_1 \| \le 2 \eps$.   Now apply the lemma with $k=2$ to get $\| P_{A_2} - P_{(A+B)_2} \|\le \eps$.  Apply this to $v_2$ and use the triangle inequality again to get $\| u_2 - v_2 \| \le 4 \eps$. 
\end{proof}

Now using Lemmas \ref{lem:norms}, \ref{lem:lelarge2}, and \ref{lem:daviskahan}  we prove parts 1 and 2 of Theorem \ref{thm:main}. 
\\

\paragraph{\textbf{Diagonal deletion SVD}}
Let $p \ge n_1^{-1/2} n_2 ^{-1/2} \log n_1$. Part 1 of Lemma \ref{lem:norms} shows that if we had access to the second eigenvector of $\E B$, we would recover $\sigma$ exactly. (The addition of a multiple of the identity matrix does not change the eigenvectors).   Instead we have access to $B = \E B + (B- \E B)$, a noisy version of the matrix we want.  We use a matrix perturbation inequality to show that the top eigenvectors of the noisy version are not too far from the original eigenvectors. 

Let $y_1$ and $y_2$ be the top two eigenvectors of $B$, and  $\hat B$ be the space spanned by $y_1$ and $y_2$, and $(\mathbb{E}B)_2$ the space spanned by the top two eigenvectors of $\mathbb{E}B$.  Then Lemma \ref{lem:daviskahan} gives
\begin{align*}
 \sin( (\E B)_2, \hat B) &\le \frac{C \| B- \E B \| }{\lam_2}  \le C \frac{n_1^{1/2} n_2^{1/2} p}{(\del-1)^2 n_1 n_2 p^2}  =O\left ( \frac{1}{\log n_1}  \right) 
\end{align*}
where the inequality holds whp by Lemma \ref{lem:norms}.  Assuming $\del \in (0,2)$, we use the particular case of Lemma \ref{lem:daviskahan}  to show that $\| y_2 - \sigma /\sqrt{n_1} \| =O(\log^{-1} n_1)$.  We round $y_2$ by signs to get $z$, and then apply Lemma \ref{lem:lelarge2} to show that whp the algorithm recovers $1-o(1)$ fraction of the coordinates of $\sigma$.  (If $\del =0$ or $2$, then instead of taking the second eigenvector, we take the component of $\hat B$ perpendicular to the all ones vector and get the same result). 
\\

%\paragraph{\textbf{Essential SVD}}
%Let $p \ge n_1^{-1/2} n_2 ^{-1/2} \log n_1$. Let $y_1$ and $y_2$ be the top two left singular vectors of $\hat M$, and  $\hat M_2$ be the space spanned by $y_1$ and $y_2$.  $y_1$ and $y_2$ are also the top two eigenvectors of $\hat M \hat M^T = B + D_E$.  Since subtracting a multiple of the identity matrix does not change the eigenvectors, $y_1$ and $y_2$ are the top two eigenvectors of $ B + D_E - \E D_E$ as well.  We again apply Lemma \ref{lem:daviskahan} to show that whp
%\begin{align*}
% \sin( (\E B)_2, \hat M_2) &\le C \frac{ \| B- \E B \| + \|D_E - \E D_E \| }{\lam_2}  \\
% &\le C \frac{ n_1^{1/2} n_2^{1/2} p + C_3 \sqrt{ n_1 n_2 p^2 \log n_1}}{(\del-1)^2 n_1 n_2 p^2}\\
% & =O\left ( \frac{1}{\sqrt{\log n_1}}  \right). 
%\end{align*}
%As above this gives $\| y_2 - \sigma /\sqrt{n_1} \| =O(\log^{-1/2} n_1)$, and shows that the Essential SVD recovers $\sigma $ whp.
%\\

\paragraph{\textbf{The SVD}}

Let $p \ge n_1^{-2/3} n_2 ^{-1/3} \log n_1$. Let $y_1$ and $y_2$ be the top two left singular vectors of $ M$, and  $ M_2$ be the space spanned by $y_1$ and $y_2$.  $y_1$ and $y_2$ are  the top two eigenvectors of $ M  M^T = B + D_V $.   Again  Lemma \ref{lem:daviskahan} gives that whp,
\begin{align*}
 \sin( (\E B)_2, M_2) &\le C \frac{ \| B- \E B \| + \|D_V - \E D_V \| }{\lam_2}  \le \frac{C _1 n_1^{1/2} n_2^{1/2} p + C_2 \sqrt{ n_2 p \log n_1}}{(\del-1)^2 n_1 n_2 p^2} =O\left ( \frac{1}{\log n_1}  \right). 
\end{align*}
This gives $\| y_2 - \sigma /\sqrt{n_1} \| =O(\log^{-1} n_1)$, and shows that the SVD algorithm recovers $\sigma $ whp.  Note that in this case $\| D_V - \E D_V \| \gg \| B- \E B \|$.  It is these fluctuations on the diagonal that explain the poor performance of the SVD and its need for a higher edge density for success.

\section{Proof of Theorem~\ref{thm:SVDfail}: Failure of the vanilla SVD}

Here we again use a matrix perturbation lemma, but in the opposite way: we will show that the `noise matrix' $( D_V - \E D_V)$ has a large spectral norm (and an eigenvalue gap), and thus adding the `signal matrix' approximately preserves the space spanned by the top eigenvalues. This shows that the top $t$ eigenvectors of $B+ D_V$ have almost all their weight on a small number of coordinates and is enough to conclude that they cannot be close to the planted vector $\sigma$. 

The perturbation lemma we use is a generalization of the Davis-Kahan theorem found in \cite{bhatia1997matrix}.

\begin{lemma}[\cite{bhatia1997matrix}]
\label{lem:perturb}
Let $A$ and $B$ be $n \times n$ symmetric matrices with the eigenvalues of $A$ ordered $\lam_1 \ge \lam _2 \ge \dots  \lam_n$.   Suppose $r>k$,   $ \lam _k  -  \lam _r > 2\del$, and $\|B \| \le \del$.  Let $A_r$ denote the subspace spanned by the first $r$ eigenvectors of $A$ and likewise for $(A+B)_k$.  Then
\[ \| P_{A_r^\perp} P_{(A+B)_k} \| \le \frac {\|B \|}{\del}.   \]
 %In particular, for all $u \in (A+B)_k$, $\|u\|=1$, there exists $v\in A_r, \|v\|=1$ so that 
%\[   \| u -  v \|   \le  \frac{ \| B \|}{\del}  \]
In particular, if $v_k$ is the $k^{th}$ unit eigenvector of $(A+B)$, then there is some unit vector $u \in A_r$ so that 
\[ \| u - v_k \| \le \frac{4 \| B \|}{\del}  .\]
\end{lemma}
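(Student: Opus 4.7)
The plan is to reduce the first inequality to a Sylvester equation for the cross-projection of eigenvectors, and then bound its solution via an integral representation. Let $X$ be the matrix whose columns are $u_{r+1},\dots,u_n$, the eigenvectors of $A$ spanning $A_r^\perp$, and let $Y$ be the matrix whose columns are $v_1,\dots,v_k$, the top $k$ eigenvectors of $A+B$ spanning $(A+B)_k$. Then $P_{A_r^\perp} P_{(A+B)_k} = X X^T Y Y^T$, whose operator norm equals $\| S \|$ for $S := X^T Y$. Writing $A X = X \Lambda_1$ with $\Lambda_1 = \mathrm{diag}(\lambda_{r+1},\dots,\lambda_n)$ and $(A+B) Y = Y \Lambda_2$ with $\Lambda_2 = \mathrm{diag}(\mu_1,\dots,\mu_k)$, and computing $X^T (A+B) Y$ in two ways, one obtains the Sylvester equation
\[ \Lambda_1 S - S \Lambda_2 = -\, X^T B Y. \]

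Next I will verify the needed spectral gap. By Weyl's inequality, $\mu_k \ge \lambda_k - \| B \| \ge \lambda_k - \delta$, while the hypothesis $\lambda_k - \lambda_r > 2\delta$ gives $\lambda_{r+1} \le \lambda_r < \lambda_k - 2\delta$, so every eigenvalue pair satisfies $\mu_j - \lambda_i > \delta$. In the eigenbasis the Sylvester equation reads $S_{ij} = (X^T B Y)_{ij} / (\mu_j - \lambda_i)$, and the identity $1/(\mu_j - \lambda_i) = \int_0^\infty e^{-t(\mu_j - \lambda_i)}\,dt$ yields the matrix integral representation
\[ S \;=\; \int_0^\infty e^{t \Lambda_1}\, (X^T B Y)\, e^{-t \Lambda_2}\, dt. \]
Since $\| e^{t \Lambda_1} \| \cdot \| e^{-t \Lambda_2} \| \le e^{t \lambda_{r+1}}\, e^{-t \mu_k} \le e^{-t \delta}$ and $\| X^T B Y \| \le \| B \|$, bounding the integrand gives $\| S \| \le \| B \| \int_0^\infty e^{-t\delta}\,dt = \| B \| / \delta$, proving the first inequality.

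For the ``in particular'' statement, apply the first inequality to the unit vector $v_k \in (A+B)_k$ to obtain $\| P_{A_r^\perp} v_k \| \le \| B \| / \delta$. Setting $\tilde u := P_{A_r} v_k \in A_r$ yields $\| \tilde u - v_k \| = \| P_{A_r^\perp} v_k \| \le \| B \| / \delta$ and $\| \tilde u \| = \sqrt{1 - \| P_{A_r^\perp} v_k \|^2}$. When $\| B \| \le \delta$, normalize $u := \tilde u / \| \tilde u \|$ and use the elementary estimate $1 - \sqrt{1 - x^2} \le x^2$ with $x = \| B \|/\delta \le 1$; the triangle inequality then gives
\[ \| u - v_k \| \;\le\; \| u - \tilde u \| + \| \tilde u - v_k \| \;\le\; x^2 + x \;\le\; 2x \;\le\; \frac{4\| B \|}{\delta}. \]
When instead $\| B \| > \delta$, the right-hand side exceeds $4$, so any unit vector $u \in A_r$ satisfies the bound trivially. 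The main technical obstacle is the step from the entrywise control $|S_{ij}| \le |(X^T B Y)_{ij}|/\delta$, which by itself does not imply an operator norm bound, to the actual spectral bound on $S$; the integral representation above is the standard device for surmounting this, and it is precisely where the positivity of the gap $\mu_k - \lambda_{r+1} > \delta$ is used to ensure integrability.
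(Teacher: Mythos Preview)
Your proof is correct. The paper itself does not prove the first inequality at all --- it simply cites Theorem~VII.3.1 of Bhatia --- whereas you supply a self-contained argument via the Sylvester equation $\Lambda_1 S - S\Lambda_2 = -X^T B Y$ and its integral representation. This is a standard route to Davis--Kahan type bounds (and is close in spirit to Bhatia's own treatment), so what you have done is fill in what the paper leaves to a reference. Your use of Weyl's inequality to establish the gap $\mu_k - \lambda_{r+1} > \delta$ and the bound $\|e^{t\Lambda_1}\|\,\|e^{-t\Lambda_2}\| \le e^{-t\delta}$ are both clean and correct.

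For the ``in particular'' statement, your argument and the paper's are essentially identical: both project $v_k$ onto $A_r$, bound the orthogonal remainder via the first inequality, and normalize. You actually obtain the sharper constant $2$ (via $1-\sqrt{1-x^2}\le x^2$) before relaxing to the stated $4\|B\|/\delta$; the paper uses the cruder $\|\tilde u\|\ge 1-\epsilon$ and the triangle inequality to reach $4\epsilon$ directly. Your handling of the degenerate case $\|B\|>\delta$ is unnecessary given the hypothesis $\|B\|\le\delta$, but harmless; the only edge case worth a word is $\|B\|=\delta$ with $\tilde u=0$, where any unit $u\in A_r$ gives $\|u-v_k\|\le 2\le 4$, which you could note explicitly.
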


\begin{proof}This lemma is a special case of Theorem VII.3.1 from \cite{bhatia1997matrix}, itself a generalization of the Davis-Kahan theorem. % It says that any unit vector in the span of the first $k$ eigenvectors of $A+B$ must be close to some unit vector in the span of the first $r$ eigenvectors of $A$.  Theorem VII
In the particular case, write $v_k =  v_k^{(r)} + \overline v_k^{(r)}$ where $v_k^{(r)} \in A_r$ and $\overline v_k^{(r)} \in A_r^\perp$. Let $\eps = \|B \| / \del$. Then, by multiplying we get
\[  P_{A_r^\perp} P_{(A+B)_k} v_k = P_{A_r^\perp} v_k = \overline v_k^{(r)}. \]
We see that $\| \overline v_k^{(r)} \| \le \eps$, and thus $\|  v_k^{(r)} \| \ge 1- \eps$. Take $u = v_k^{(r)}/\| v_k^{(r)} \|$ and use the triangle inequality to complete the lemma: $u \in A_r$ and $\| u - v_k \| \le 4 \eps$. 
\end{proof}

We also need to analyze the degrees of the vertices in $V_1$.  The following lemma gives some basic information about the degree sequence:

\begin{lemma}
\label{lem:degrees}
Let $d_1, \dots d_{n_1}$ be the sequence of degrees of vertices in $V_1$.  Then there exist constants $c_1, c_2, c_3 $ so  that
\begin{enumerate}
\item The $d_i$'s are independent and identically distributed, with distribution $d_i \sim \operatorname{Bin}(n_2/2, \del p) +  \operatorname{Bin}(n_2/2,(2- \del) p)$. 
\item $\E d_i = n_2 p$.
\item Whp, $\displaystyle \max_{i} d_i \le n_2 p + c_1 \sqrt{ n_2 p \log n_1}  $.
\item  Whp, $\left | \{ i: d_i \ge  n_2 p + c_2 \sqrt{ n_2 p \log n_1}  \}  \right | \ge n_1^{1/3}$.
\item Whp, $\left | \{ i: d_i \ge  n_2 p + c_3 \sqrt{ n_2 p \log \log n_1}  \}  \right | \le n_1 / \log n_1$.
\end{enumerate}

\end{lemma}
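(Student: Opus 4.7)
Parts (1) and (2) are routine. For $u \in V_1$, the edges from $u$ to the $n_2/2$ same-label vertices of $V_2$ are independent $\mathrm{Bernoulli}(\del p)$, the edges to the $n_2/2$ opposite-label vertices are independent $\mathrm{Bernoulli}((2-\del) p)$, and these two groups are jointly independent; this yields the claimed distributional decomposition of $d_i$. Because the law is symmetric under $\del \leftrightarrow 2-\del$, it is independent of $\sigma(u)$, and because distinct $u$'s depend on disjoint edge variables, the $d_i$ are i.i.d. Summing the two binomial means gives $\E d_i = n_2 p$, settling (2).

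For (3) and (5), since $d_i$ is a sum of $n_2$ independent Bernoullis with $\E d_i = n_2 p$ and $\mathrm{Var}(d_i) \le n_2 p$, Bernstein's inequality gives
\[ \Pr[d_i \ge n_2 p + t] \le \exp\!\Big(-\tfrac{t^2}{2 n_2 p + 2t/3}\Big). \]
The standing hypotheses $n_2 \ge n_1 \log^4 n_1$ and $p \ge c n_1^{-2/3} n_2^{-1/3}$ force $n_2 p \ge c' \log^{8/3} n_1 \to \infty$, so for $t = c_1 \sqrt{n_2 p \log n_1}$ the linear term in the denominator is negligible and, for $c_1$ large, the per-vertex probability is $\le n_1^{-3}$; a union bound over $V_1$ finishes (3). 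Taking instead $t = c_3 \sqrt{n_2 p \log \log n_1}$ gives per-vertex probability $\le (\log n_1)^{-c_3^2/3}$, so the expected number of such heavy vertices is at most $n_1 / (\log n_1)^{c_3^2/3}$. Choosing $c_3$ large enough to make this $o(n_1/\log n_1)$, a second multiplicative Chernoff bound on the independent indicator sum confines the actual count to $n_1/\log n_1$ whp.

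Part (4) is the delicate step, because Chernoff supplies only upper tails whereas we now need the matching anti-concentration estimate. The plan is to invoke moderate deviations: since $n_2 p \gg \log n_1$ under our hypotheses, the deviation scale $\sqrt{n_2 p \log n_1}$ sits comfortably inside the Gaussian window of $d_i$. Either a local CLT for sums of independent bounded random variables, or a direct Stirling estimate applied to $\binom{n_2/2}{k_1}\binom{n_2/2}{k_2} (\del p)^{k_1}(1-\del p)^{n_2/2-k_1}((2-\del)p)^{k_2}(1-(2-\del)p)^{n_2/2-k_2}$ at $k_1 + k_2 = n_2 p + c_2 \sqrt{n_2 p \log n_1}$, then yields
\[ \Pr\!\big[d_i \ge n_2 p + c_2 \sqrt{n_2 p \log n_1}\big] \ge n_1^{-c_2^2/2 - o(1)} \]
for $c_2$ sufficiently small. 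Picking $c_2$ with $c_2^2 < 4/3$, the expected number of such vertices is at least $n_1^{1 - c_2^2/2 - o(1)} \gg n_1^{1/3}$, and since the $d_i$ are independent by (1), one final multiplicative Chernoff bound on their indicator sum pushes the count above $n_1^{1/3}$ whp.

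The main obstacle is exactly this anti-concentration step: Chernoff is too blunt, and one must carry out the moderate-deviations bookkeeping carefully enough that the multiplicative error in the Gaussian approximation to the binomial density is absorbed into the $n_1^{o(1)}$ slack. The pivotal arithmetic check is that the density hypothesis leaves $\log n_1$ sufficiently below $n_2 p$ to keep the expansion valid at the relevant scale; everything else is a standard Chernoff-and-union-bound exercise.
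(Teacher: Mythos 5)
Your proposal is correct and follows essentially the route the paper indicates (the paper only remarks that the lemma "follows from basic Chernoff bounds and the first- and second-moment methods"): Bernstein/Chernoff plus a union bound for parts (3) and (5), and for part (4) a moderate-deviations (Stirling) lower bound on the tail probability — valid here since $t/\mu = O(\log^{-5/6} n_1) \to 0$ so the cubic correction is $n_1^{o(1)}$ — followed by concentration of the independent indicator count, which is the first/second-moment step the authors have in mind. Your arithmetic check that $c_2^2 < 4/3$ suffices and that $n_2 p \gtrsim \log^{8/3} n_1$ keeps the deviation scale inside the admissible window is exactly the bookkeeping needed.
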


The lemma follows from basic Chernoff bounds and the first- and second-moment methods. Now, we can finish the proof of Theorem~\ref{thm:SVDfail}.

\begin{proof}[Proof of Theorem~\ref{thm:SVDfail}]
Let $p = c n_1^{-2/3} n_2^{-1/3}$.  The left singular vectors of $M$ are the eigenvectors of $B+ D_V$.  Recall that $D_V$ is a diagonal matrix with the $i$th  entry the degree of the $i$th vertex of $V_1$.  $\E D_V$ is therefore a multiple of the identity matrix, and so subtracting $\E D_V$ from $B+ D_V$ does not change its eigenvectors. The standard basis vectors form an orthonormal set of eigenvectors of $ D_V -\E D_V$.  

For the constants $c_2, c_3$ in  Lemma \ref{lem:degrees}, let $\eta_1 = c_2 \sqrt{n_2 p \log n_1}$ and $\eta_2 = c_3 \sqrt{n_2 p \log \log n_1}$. Order the eigenvalues of $D_V - \E D_V$ as $\lam_1 \ge \lam_2 \ge \dots \ge \lam _n$ and let $r$ be the smallest integer  such that $\lam_r < \eta_2$.  Then we have $\lam_i - \lam_r \ge c \sqrt{n_2 p \log n_1}$ for all $1 \le i \le t$.  From Lemma \ref{lem:degrees}, $r \le n_1 /\log n_1$.

We now bound 
\begin{align*}
\| B\| &\le \| \E B\| + \| B - \E B\| \le n_1 n_2 p^2 +  C n_1^{1/2} n_2^{1/2} p.
\end{align*}
Now Lemma \ref{lem:perturb} says that if $v_i$ is the $i$th eigenvector of $ D_V - \E D_V + B$, then there is a vector $u$ in the span of the first $r$ eigenvectors of $D_V - \E D_V$ so that 
\begin{align*}
\| v_i - u \| & \le C \frac{  n_1 n_2 p^2 +   n_1^{1/2} n_2^{1/2} p  }{  \sqrt{n_2 p \log n_1}   } = O \left( \frac{1}{\sqrt{\log n_1}} \right ).
\end{align*}

The span of the first $r$ eigenvectors of $D_V - \E D_V$ is supported on only $r$ coordinates, so $u$ is far from $\overline \sigma = \sigma / \sqrt {n_1}$:  
\[ \| u - \overline \sigma  \| \ge \sqrt{ 2 - 2 \sqrt{r/n_1}} = \sqrt 2 - O(1/\sqrt{\log n_1}). \]
 By the triangle inequality, $v_i$ must also be far from $\overline \sigma$: $| v_i \cdot \overline \sigma| = O(1/\sqrt{\log n_1})$.  This proves Theorem \ref{thm:SVDfail}.
\end{proof}

\section*{Acknowledgements}
We thank the Institute for Mathematics and its Applications (IMA) in Minneapolis, where part of this work was done, for its support and hospitality.  

%\acks{We thank the Institute for Mathematics and its Applications (IMA) in Minneapolis, where part of this work was done, for its support and hospitality.  }

\bibliography{planted2}

\begin{thebibliography}{10}

\bibitem{abh}
Emmanuel Abbe, Afonso~S Bandeira, and Georgina Hall.
\newblock Exact recovery in the stochastic block model.
\newblock {\em Information Theory, IEEE Transactions on}, 62(1):471--487, 2016.

\bibitem{as}
Emmanuel Abbe and Colin Sandon.
\newblock Detection in the stochastic block model with multiple clusters: proof
  of the achievability conjectures, acyclic bp, and the information-computation
  gap.
\newblock {\em arXiv preprint arXiv:1512.09080}, 2015.

\bibitem{applebaum2013cryptographic}
Benny Applebaum.
\newblock Cryptographic hardness of random local functions--survey.
\newblock In {\em Theory of Cryptography}, pages 599--599. Springer, 2013.

\bibitem{benaych2012singular}
Florent Benaych-Georges and Raj~Rao Nadakuditi.
\newblock The singular values and vectors of low rank perturbations of large
  rectangular random matrices.
\newblock {\em Journal of Multivariate Analysis}, 111:120--135, 2012.

\bibitem{bhatia1997matrix}
Rajendra Bhatia.
\newblock {\em Matrix analysis}, volume 169.
\newblock Springer Science \& Business Media, 1997.

\bibitem{bc}
Peter~J. Bickel and Aiyou Chen.
\newblock A nonparametric view of network models and newmanÐgirvan and other
  modularities.
\newblock {\em Proceedings of the National Academy of Sciences},
  106(50):21068--21073, 2009.

\bibitem{bogdanov2009security}
Andrej Bogdanov and Youming Qiao.
\newblock On the security of goldreich's one-way function.
\newblock In {\em Approximation, Randomization, and Combinatorial Optimization.
  Algorithms and Techniques}, pages 392--405. Springer, 2009.

\bibitem{bordenave2015non}
Charles Bordenave, Marc Lelarge, and Laurent Massouli{\'e}.
\newblock Non-backtracking spectrum of random graphs: community detection and
  non-regular ramanujan graphs.
\newblock In {\em Foundations of Computer Science (FOCS), 2015 IEEE 56th Annual
  Symposium on}, pages 1347--1357. IEEE, 2015.

\bibitem{bcls}
T.N. Bui, S.~Chaudhuri, F.T. Leighton, and M.~Sipser.
\newblock Graph bisection algorithms with good average case behavior.
\newblock {\em Combinatorica}, 7(2):171--191, 1987.

\bibitem{butucea2015sharp}
Cristina Butucea, Yuri~I Ingster, and Irina~A Suslina.
\newblock Sharp variable selection of a sparse submatrix in a high-dimensional
  noisy matrix.
\newblock {\em ESAIM: Probability and Statistics}, 19:115--134, 2015.

\bibitem{crv}
Peter Chin, Anup Rao, and Van Vu.
\newblock Stochastic block model and community detection in sparse graphs: A
  spectral algorithm with optimal rate of recovery.
\newblock In {\em Proceedings of The 28th Conference on Learning Theory}, pages
  391--423, 2015.

\bibitem{coja2010graph}
Amin Coja-Oghlan.
\newblock Graph partitioning via adaptive spectral techniques.
\newblock {\em Combinatorics, Probability \& Computing}, 19(2):227, 2010.

\bibitem{ck}
Anne Condon and Richard~M. Karp.
\newblock Algorithms for graph partitioning on the planted partition model.
\newblock {\em Random Struct. Algorithms}, 18(2):116--140, March 2001.

\bibitem{davis1970rotation}
Chandler Davis and William~Morton Kahan.
\newblock The rotation of eigenvectors by a perturbation. iii.
\newblock {\em SIAM Journal on Numerical Analysis}, 7(1):1--46, 1970.

\bibitem{decelle2011asymptotic}
Aurelien Decelle, Florent Krzakala, Cristopher Moore, and Lenka Zdeborov{\'a}.
\newblock Asymptotic analysis of the stochastic block model for modular
  networks and its algorithmic applications.
\newblock {\em Physical Review E}, 84(6):066106, 2011.

\bibitem{df}
M.E Dyer and A.M Frieze.
\newblock The solution of some random np-hard problems in polynomial expected
  time.
\newblock {\em Journal of Algorithms}, 10(4):451 -- 489, 1989.

\bibitem{evans2000broadcasting}
William Evans, Claire Kenyon, Yuval Peres, and Leonard~J Schulman.
\newblock Broadcasting on trees and the ising model.
\newblock {\em Annals of Applied Probability}, pages 410--433, 2000.

\bibitem{feldman2013complexity}
Vitaly Feldman, Will Perkins, and Santosh Vempala.
\newblock On the complexity of random satisfiability problems with planted
  solutions.
\newblock In {\em STOC 2015: 47th Annual Symposium on the Theory of Computing},
  2015.

\bibitem{feldman2014algorithm}
Vitaly Feldman, Will Perkins, and Santosh Vempala.
\newblock Subsampled power iteration: a new algorithm for block models and
  planted csp's.
\newblock In {\em NIPS}, 2015.

\bibitem{goldreich2000candidate}
Oded Goldreich.
\newblock Candidate one-way functions based on expander graphs.
\newblock {\em IACR Cryptology ePrint Archive}, 2000:63, 2000.

\bibitem{hajek2015achieving}
Bruce Hajek, Yihong Wu, and Jiaming Xu.
\newblock Achieving exact cluster recovery threshold via semidefinite
  programming.
\newblock In {\em Information Theory (ISIT), 2015 IEEE International Symposium
  on}, pages 1442--1446. IEEE, 2015.

\bibitem{hajek2015submatrix}
Bruce Hajek, Yihong Wu, and Jiaming Xu.
\newblock Submatrix localization via message passing.
\newblock {\em arXiv preprint arXiv:1510.09219}, 2015.

\bibitem{hll}
Paul~W. Holland, Kathryn~B. Laskey, and Samuel Leinhardt.
\newblock {Stochastic blockmodels: First steps}.
\newblock {\em Social Networks}, 5(2):109--137, June 1983.

\bibitem{js}
Mark Jerrum and Gregory~B. Sorkin.
\newblock The metropolis algorithm for graph bisection.
\newblock {\em Discrete Applied Mathematics}, 82(1–3):155 -- 175, 1998.

\bibitem{krzakala2014reweighted}
Florent Krzakala, Marc M{\'e}zard, and Lenka Zdeborov{\'a}.
\newblock Reweighted belief propagation and quiet planting for random k-sat.
\newblock {\em Journal on Satisfiability, Boolean Modeling and Computation},
  8:149--171, 2014.

\bibitem{larremore2014efficiently}
Daniel~B Larremore, Aaron Clauset, and Abigail~Z Jacobs.
\newblock Efficiently inferring community structure in bipartite networks.
\newblock {\em Physical Review E}, 90(1):012805, 2014.

\bibitem{lelarge2013reconstruction}
Marc Lelarge, Laurent Massouli{\'e}, and Jiaming Xu.
\newblock Reconstruction in the labeled stochastic block model.
\newblock In {\em Information Theory Workshop (ITW), 2013 IEEE}, pages 1--5.
  IEEE, 2013.

\bibitem{lyons1990random}
Russell Lyons.
\newblock Random walks and percolation on trees.
\newblock {\em The annals of Probability}, pages 931--958, 1990.

\bibitem{massoulie2014community}
Laurent Massouli{\'e}.
\newblock Community detection thresholds and the weak ramanujan property.
\newblock In {\em STOC 2014: 46th Annual Symposium on the Theory of Computing},
  pages 1--10, 2014.

\bibitem{mcsherry2001spectral}
Frank McSherry.
\newblock Spectral partitioning of random graphs.
\newblock In {\em Foundations of Computer Science, 2001. Proceedings. 42nd IEEE
  Symposium on}, pages 529--537. IEEE, 2001.

\bibitem{mossel2013proof}
Elchanan Mossel, Joe Neeman, and Allan Sly.
\newblock A proof of the block model threshold conjecture.
\newblock {\em arXiv preprint arXiv:1311.4115}, 2013.

\bibitem{mossel2012stochastic}
Elchanan Mossel, Joe Neeman, and Allan Sly.
\newblock Reconstruction and estimation in the planted partition model.
\newblock {\em Probability Theory and Related Fields}, pages 1--31, 2014.

\bibitem{mossel2015consistency}
Elchanan Mossel, Joe Neeman, and Allan Sly.
\newblock Consistency thresholds for the planted bisection model.
\newblock In {\em Proceedings of the Forty-Seventh Annual ACM on Symposium on
  Theory of Computing}, pages 69--75. ACM, 2015.

\bibitem{newman2001scientific}
Mark~EJ Newman.
\newblock Scientific collaboration networks. i. network construction and
  fundamental results.
\newblock {\em Physical review E}, 64(1):016131, 2001.

\bibitem{ow}
Ryan O'Donnell and David Witmer.
\newblock Goldreich's prg: Evidence for near-optimal polynomial stretch.
\newblock In {\em CCC}, 2014.

\bibitem{pemantle2010critical}
Robin Pemantle and Yuval Peres.
\newblock The critical ising model on trees, concave recursions and nonlinear
  capacity.
\newblock {\em The Annals of Probability}, 38(1):184--206, 2010.

\bibitem{sn}
Tom A.~B. Snijders and Krzysztof Nowicki.
\newblock {Estimation and Prediction for Stochastic Blockmodels for Graphs with
  Latent Block Structure}.
\newblock {\em Journal of Classification}, 14(1):75--100, January 1997.

\bibitem{vu2014simple}
Van Vu.
\newblock A simple svd algorithm for finding hidden partitions.
\newblock {\em arXiv preprint arXiv:1404.3918}, 2014.

\bibitem{vu2005spectral}
Van~H Vu.
\newblock Spectral norm of random matrices.
\newblock In {\em Proceedings of the thirty-seventh annual ACM symposium on
  Theory of computing}, pages 423--430. ACM, 2005.

\bibitem{zhou2007bipartite}
Tao Zhou, Jie Ren, Mat{\'u}{\v{s}} Medo, and Yi-Cheng Zhang.
\newblock Bipartite network projection and personal recommendation.
\newblock {\em Physical Review E}, 76(4):046115, 2007.

\end{thebibliography}
\bibliographystyle{plain}

\appendix

\section{Proof of Lemma~\ref{lem:47}.}
\begin{proof}

\begin{align*}
\Pr [ \sigma_A, \tau_A |  \sigma_{B \cup C}, \tau _{B \cup C}, G] &= \frac{ \Pr [ \sigma_A, \tau_A, G |  \sigma_{B \cup C}, \tau _{B \cup C}, \hat G] }{\Pr [ G | \sigma_{B \cup C}, \tau _{B \cup C}, \hat G ]    } \\
&= \Pr [ \sigma_A, \tau_A |  \sigma_{B \cup C}, \tau _{B \cup C}, \hat G] \cdot \frac{\Pr [ G | \sigma_A, \tau_A, \sigma_{B \cup C}, \tau _{B \cup C}, \hat G ]   }  { \Pr [ G | \sigma_{B \cup C}, \tau _{B \cup C}, \hat G ]    } \\
&= \Pr [ \sigma_A, \tau_A |  \sigma_{B \cup C}, \tau _{B \cup C}, \hat G] \cdot \frac{\Pr [ G | \sigma,  \hat G ]   }  { \Pr [ G | \sigma_{B \cup C},  \hat G ]    }
\end{align*}

We now show that the last factor is $1+o(1)$ whp over $\sigma, \tau$, and $G$.  

\begin{lemma}
\label{lem:degree01}
%Let $V_2^{(0)}$ and $V_2^{(1)}$ be the sets of vertices of degree $0$ and $1$ respectively in $V_2$.  Let $G^\prime$ be the subgraph of $G$ induced by $V_2 \setminus ( V_2^{(0)} \cup V_2^{(1)})$. 
Let $U \subseteq V_1$, and $\sigma_U$ the restriction of $\sigma$ to $U$.    
Then
%\[ \Pr[ \sigma_A | G] = (1-o(1)) \Pr[ \sigma_A | G^\prime ] \]
\begin{align*}
\Pr[ G | \sigma, \hat G] = (1+o(1)) \Pr[ G | \sigma_U, \hat G]
\end{align*}
whp over the choices of $\sigma,  G$. 
\end{lemma}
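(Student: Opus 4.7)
The plan is to compute $\Pr[G\mid\hat G,\sigma]$ explicitly as a product over $v \in V_2^{(\le 1)}$, observe that each factor depends on $\sigma$ only through the global counts $s_\pm := |\{u:\sigma(u)=\pm\}|$ and, for degree-$1$ vertices $v$, the single label $\sigma(u_v)$ of the unique neighbour $u_v$, and then bound the resulting log-ratio between $\Pr[G\mid\hat G,\sigma]$ and $\Pr[G\mid\hat G,\sigma_U]$ to be $o(1)$ whp.

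Because $\tau$ is drawn independently of $\sigma$ and edges at distinct $v \in V_2$ are conditionally independent given $(\sigma,\tau)$,
\[
\Pr[G\mid\hat G,\sigma] = \prod_{v\in V_2^{(\le 1)}} \Pr[e_v \mid d_v\le 1,\sigma].
\]
Setting $Z_\pm := (1-\del p)^{s_\pm}(1-(2-\del)p)^{s_\mp}$ and $\eta := \log(Z_+/Z_-) = 2(1-\del)p(s_+-s_-)$, on the whp event $|s_+-s_-|=O(\sqrt{n_1\log n_1})$ we have $|\eta|=O(\sqrt{(\log n_1)/n_2})$.  Marginalising $\tau(v)$ and using $p=o(n_1^{-1})$ then give
\[
\Pr[d_v=0\mid d_v\le 1,\sigma] = \tfrac{1}{1+n_1p}\bigl(1 + O(p^2(s_+-s_-)^2)\bigr),
\]
\[
\Pr[e_v=(u_v,v)\mid d_v\le 1,\sigma] = \tfrac{p}{1+n_1p}\bigl(1 + \tfrac{\del-1}{2}\sigma(u_v)\eta + O(\eta^2)\bigr).
\]
The $\sigma$-dependence thus splits into a \emph{global} piece through $s_\pm$, identical across all factors of a given degree class, and an \emph{individual} piece $\tfrac{\del-1}{2}\sigma(u_v)\eta$ for each degree-$1$ vertex.

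Writing $\Pr[G\mid\hat G,\sigma_U] = \E_{\sigma_{U^c}\mid\hat G,\sigma_U}\bigl[\Pr[G\mid\hat G,\sigma_U,\sigma_{U^c}]\bigr]$, the log-ratio decomposes accordingly.  The individual contribution telescopes to $\tfrac{\del-1}{2}\eta(T_+-T_-) + O\bigl(\eta^2\sum_u k_u^2\bigr)$, where $T_\pm := |\{v:d_v=1,\,u_v\in U^c,\,\sigma(u_v)=\pm\}|$ and $k_u$ counts the degree-$1$ neighbours of $u$.  A Chernoff bound on edges from $U^c$ yields $T_++T_-=o(\sqrt{n_2})$ whp (using $|U^c|=o(\sqrt{n_1})$ and $p=O((n_1n_2)^{-1/2})$); the near-balance $|s_+^{U^c}-s_-^{U^c}|=O(\sqrt{|U^c|\log n_1})$ gives $\E[T_+-T_-]=O(\sqrt{|U^c|\log n_1}\cdot n_2p)=o(\sqrt{n_2\log n_1}/n_1^{1/4})$; adding the concentration $\sqrt{T_++T_-}=o(n_2^{1/4})$ yields $|T_+-T_-|=o(\sqrt{n_2\log n_1}/n_1^{1/4})$ whp.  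Combined with $|\eta|=O(\sqrt{(\log n_1)/n_2})$, the linear term is $o(\log n_1/n_1^{1/4})=o(1)$, and the quadratic remainder is $o(1)$ via $\sum_u k_u^2=O(|U^c|(n_2p)^2)=o(n_2/\sqrt{n_1})$.

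The main obstacle will be the global $(s_+,s_-)$-contribution: each of the $\Theta(n_2)$ per-vertex factors carries an identical subleading $\sigma$-dependence of size $O(p^2(s_+-s_-)^2)=O((\log n_1)/n_2)$, so a naive per-vertex bound accumulates multiplicatively to a factor of size $n_1^{O(1)}$.  The resolution is that this dependence enters numerator and denominator in the same functional form: under marginalisation over $\sigma_{U^c}$, the random quantity $(s_+-s_-)^2$ fluctuates by only $O(\sqrt{n_1\log n_1}\cdot\sqrt{|U^c|})=O(n_1^{3/4}\sqrt{\log n_1})$ around its conditional mean, and this difference multiplied by $n_2p^2$ is $O(\sqrt{\log n_1}/n_1^{1/4})=o(1)$.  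Rigorously executing this cancellation requires expanding $\log\E_{\sigma_{U^c}\mid\hat G,\sigma_U}[F(\sigma)]$, with $F(\sigma):=\prod_v \Pr[e_v\mid d_v\le 1,\sigma]$, to quadratic order in $s_\pm^{U^c}$ around its conditional mean, together with a bound showing that the posterior of $\sigma_{U^c}$ given $(\hat G,\sigma_U)$ is sufficiently close to uniform in the subcritical regime $p\le 1/((\del-1)^2\sqrt{n_1n_2})$ for the Gaussian-like fluctuation estimates to control the expectation.
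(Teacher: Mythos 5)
Your overall route is the same as the paper's: factor $\Pr[G\mid \hat G,\sigma]$ over the vertices of $V_2$ of degree at most one, expand the per-vertex probabilities, and note that the $\sigma$-dependence enters only through the global bias (your $s_+-s_-$, the paper's $\beta_1$) together with, for each degree-one vertex, the label of its neighbour; you are in fact more explicit than the paper about this attachment term, which the paper absorbs into its computation of $\Pr[d(v)=1\mid d(v)\le 1,\beta_1]$. The genuine gap is that the step you yourself identify as the main obstacle --- the global $(s_+-s_-)^2$ contribution shared by all $\Theta(n_2)$ factors --- is never actually carried out. Your proposed cancellation needs the fluctuation of $S_{U^c}:=\sum_{u\in U^c}\sigma(u)$ under the average defining $\Pr[G\mid\sigma_U,\hat G]$ to be of order $\sqrt{|U^c|}$ up to logarithms, and you correctly observe that this requires the posterior of $\sigma_{U^c}$ given $(\hat G,\sigma_U)$ to be close to uniform --- but you leave that as an assumption. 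That assumption is substantive: the average in the denominator is weighted by the likelihood of $\hat G$, and controlling how much $\hat G$ tilts the labels in $U^c$ is essentially the kind of statement the whole impossibility argument is trying to establish, so invoking it here is close to circular and in any case unproven. Without it, the only available bound is the deterministic $|S_{U^c}|\le |U^c|=o(\sqrt{n_1})$, and the cross term $|S_U|\,|S_{U^c}|\,n_2p^2$ then only gives an error factor of size $\exp\bigl(o(\sqrt{\log n_1})\bigr)$, which need not be $1+o(1)$. So as written your argument is a conditional sketch rather than a proof.

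The paper closes this step without any posterior input: it expresses everything through the conditional per-vertex probabilities $\Pr[d(v)=0\mid d(v)\le 1,\beta_1]$ and $\Pr[d(v)=1\mid d(v)\le 1,\beta_1]$, in which the common leading factor cancels and the bias-dependence enters numerator and denominator of the ratio in the same form up to per-vertex discrepancies of order $\beta_1^4n_1^4p^4$; it then needs only the elementary facts that whp $\beta_1^4\le n_1^{-9/5}$ and that whp over $\sigma_U$ the conditional expectation of $\beta_1^4$ is at most $n_1^{-9/5}$ --- crude moment bounds under the uniform prior --- to conclude that the ratio is $(1+O(\beta_1^4n_1^4p^4))^{O(n_2)}=1+o(1)$. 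To repair your version you should either prove the near-uniformity of the posterior you invoke, or reorganize the expansion as the paper does so that the quadratic bias terms cancel structurally and only prior moment bounds on $\beta_1$ are required. A minor further point: your estimates repeatedly use $|U^c|=o(\sqrt{n_1})$, which is not a hypothesis of the lemma as stated; it is harmless in context (the lemma is applied with $U^c$ contained in a set of size $o(\sqrt{n_1})$), but it should be made explicit.
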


We leave the proof of Lemma~\ref{lem:degree01} to Appendix B.

%\end{proof}

To prove Lemma \ref{lem:47}, it remains to show

\begin{equation}
\label{47eq}
 \Pr [ \sigma_A, \tau_A |  \sigma_{B \cup C}, \tau _{B \cup C}, \hat G] = (1+o(1)) \Pr[\sigma_A, \tau_A |  \sigma_{B}, \tau _{B}, \hat G  ]  
 \end{equation}
whp over $\sigma, \tau, G$.  Now that we have removed vertices of degree $0$ and $1$ from $V_2$, the proof proceeds along the same lines as the proof of Lemma 4.7 of \cite{mossel2012stochastic}. 

For $u\in V_1, v \in V_2$, define 
\[ \psi_{u,v}(\hat G, \sigma, \tau) = 
\begin{cases}
 \del p \, , \text{if } (u,v) \in E(\hat G), \sigma (u) = \tau (v) \\
 (2-\del)p \, , \text{if } (u,v) \in E(\hat G), \sigma (u) \ne \tau (v) \\
 1- \del p \, , \text{if } (u,v) \notin E(\hat G), \sigma (u) = \tau (v)\\
  1 - (2-\del)p \, , \text{if }(u,v) \notin E(\hat G), \sigma (u) \ne \tau (v). 
\end{cases}
   \]
   Define $Q_{U_1, U_2}$ to be the product of $\psi_{u,v}(\hat G, \sigma, \tau)$ over all $u \in U_1, v \in U_2$. Define $$Q^-(\hat G, \sigma, \tau) = \Pr[ \wedge_{e \in V_1 \times V_2^{(\le 1)}} e \notin G | \sigma, \tau_{V_2^{(\ge 2)}}].$$ %Then if we let $\tau^2 = \tau^2(\hat G)$ be the restriction of $\tau $ to $V_2^{(\ge 2)}$,  

We denote by $\eta$ and $\phi$ labelings of $V_1$ and $V_2$ respectively.  $\eta_A$ refers to the restriction of $\eta $ to the set $A \cap V_1$, and so on. We write $\sigma_{AB}$ instead of $\sigma_{A \cup B}$ for cleaner notation.   Equation (\ref{47eq}) is equivalent to 
\begin{equation}
\label{eq:47.2}
\frac{\Pr[ \hat G | \sigma_{A  B  C}, \tau_{A  B  C}] }{\Pr[ \hat G | \sigma_{B  C}, \tau_{B  C}]  } = (1+o(1)) \cdot \frac{\Pr[ \hat G | \sigma_{A  B }, \tau_{A  B }] }{\Pr[ \hat G | \sigma_{B }, \tau_{B }]  }
\end{equation}

We rewrite the LHS of (\ref{eq:47.2}) as 
\begin{align}
\nonumber
&\frac{ Q_{A,A B}(\sigma_{A  B  C}, \tau_{A  B  C}) Q_{A,C}(\sigma_{A  B  C}, \tau_{A  B  C} ) Q_{B  C, B  C}(\sigma_{A  B  C}, \tau_{A  B  C}) Q^-(\hat G, \sigma_{A  B  C}, \tau_{A  B  C})  }{ \sum_{\substack{\eta: \eta_{B  C}=\sigma_{B  C} \\ \phi: \phi_{B  C}= \tau_{B  C}} }  Q_{A,A B}(\eta_{A  B  C}, \phi_{A  B  C}) Q_{A,C}(\eta_{A  B  C}, \phi_{A  B  C} ) Q_{B  C, B  C}(\eta_{A  B  C}, \phi_{A  B  C}) Q^-(\hat G, \eta_{A  B  C}, \phi_{A  B  C}) }  \\
\nonumber
&=  \frac{ Q_{A,A B}(\sigma_{A  B }, \tau_{A  B }) Q_{A,C}(\sigma_{A   C}, \tau_{A   C} ) Q_{B  C, B  C}(\sigma_{  B  C}, \tau_{  B  C}) Q^-(\hat G, \sigma_{A  B  C}, \tau_{A  B  C})  }{ \sum_{\substack{\eta: \eta_{B  C}=\sigma_{B  C} \\ \phi: \phi_{B  C}= \tau_{B  C}} }  Q_{A,A B}(\eta_{A  B  }, \phi_{A  B  }) Q_{A,C}(\eta_{A    C}, \phi_{A    C} ) Q_{B  C, B  C}(\sigma_{  B  C}, \tau_{  B  C}) Q^-(\hat G, \eta_{A  B  C}, \phi_{A  B  C}) }  \\
&=  \frac{ Q_{A,A B}(\sigma_{A  B }, \tau_{A  B }) Q_{A,C}(\sigma_{A   C}, \tau_{A   C} ) Q^-(\hat G, \sigma_{A  B  C}, \tau_{A  B  C})  }{ \sum_{\substack{\eta: \eta_{B  C}=\sigma_{B  C} \\ \phi: \phi_{B  C}= \tau_{B  C}} }  Q_{A,A B}(\eta_{A  B  }, \phi_{A  B  }) Q_{A,C}(\eta_{A    C}, \phi_{A    C} )  Q^-(\hat G, \eta_{A  B  C}, \phi_{A  B  C}) } 
\label{eq47.3} 
\end{align}

This is a similar expression as encountered in the proof of Lemma 4.7 in \cite{mossel2012stochastic} apart from the factors involving $Q^-$.  To address these factors we use the following Lemma:

\begin{lemma}
\label{lem:qm}
Let $U \subseteq V_1 \cup V_2^{(\ge 2)}$ so that $|V_1 \cup V_2^{(\ge 2)} \setminus U| =o(n_1^{1/2})$. Then for any $\eta, \phi$ so that $\eta_U = \sigma_U$, $\phi_U = \tau_U$,
\[Q^-(\hat G, \sigma, \tau) = (1+o(1))  Q^-(\hat G, \eta, \phi) \]
with probability $1-o(1)$ over the choice of $\sigma, \tau, \hat G$. 
\end{lemma}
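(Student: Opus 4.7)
The plan is to show that $Q^-$ depends on the labels only through the bias $\beta_1 := n_1^{-1}\sum_u \sigma(u)$ of the $V_1$-labeling, and to bound the multiplicative change in $Q^-$ caused by altering this bias by the small amount induced by flipping $o(\sqrt{n_1})$ labels.

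First, since each edge between $V_1$ and $V_2^{(\le 1)}$ depends only on its two endpoint labels and is independent of $\tau|_{V_2^{(\ge 2)}}$, I marginalize out the uniform iid labels $\tau|_{V_2^{(\le 1)}}$ to obtain the factorization
\[ Q^-(\hat G,\sigma,\tau) = q(\beta_1(\sigma))^{|V_2^{(\le 1)}|}, \qquad q(\beta) = e^A \cosh(B(\beta)), \]
with $A$ depending only on $(n_1,p,\delta)$ and $B(\beta) = (1-\delta)\beta n_1 p\,(1+O(p))$. In particular $Q^-$ has no dependence on $\tau|_{V_2^{(\ge 2)}}$ whatsoever, so flipping $\phi$ relative to $\tau$ on $V_2^{(\ge 2)}\setminus U$ is automatically harmless; only the change $\eta\ne\sigma$ on $V_1\setminus U$ must be controlled.

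For arbitrary $\eta$ with $\eta|_U = \sigma|_U$, writing $|V_1\setminus U|\le \varepsilon_n\sqrt{n_1}$ with $\varepsilon_n=o(1)$, one has $|\beta_1(\eta)-\beta_1(\sigma)| = O(\varepsilon_n/\sqrt{n_1})$, and at the relevant scaling $p\le C(n_1n_2)^{-1/2}$ this gives $|\Delta B| := |B(\beta_1(\eta))-B(\beta_1(\sigma))| = O(\varepsilon_n/\sqrt{n_2})$. A mean-value bound on $\log\cosh$ combined with $|\tanh t|\le|t|$ then yields $|\log(q(\beta_1(\eta))/q(\beta_1(\sigma)))|\le |\Delta B|(|B(\beta_1(\sigma))|+|\Delta B|)$, and multiplying by $|V_2^{(\le 1)}|\le n_2$ gives
\[ \bigl|\log\bigl(Q^-(\eta)/Q^-(\sigma)\bigr)\bigr| = O\!\left(\sqrt{n_2}\,|B(\beta_1(\sigma))|\,\varepsilon_n + \varepsilon_n^2\right). \]

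By Gaussian concentration, for any $\omega_n\to\infty$ chosen in advance we have $|\beta_1(\sigma)|\le\omega_n/\sqrt{n_1}$ whp, so $|B(\beta_1(\sigma))| = O(\omega_n/\sqrt{n_2})$ and the right-hand side becomes $O(\varepsilon_n\omega_n + \varepsilon_n^2)$. Choosing $\omega_n=\varepsilon_n^{-1/2}$ makes this $O(\sqrt{\varepsilon_n})=o(1)$ uniformly in $\eta$, proving the lemma. The main obstacle is this two-scale tension: $Q^-$ is a product of $\sim n_2$ identical factors each changing individually, and the naive bound $|\tanh|\le 1$ on the derivative of $\log\cosh$ would lose a factor of $\sqrt{n_1}$; the saving comes from the quadratic behavior of $\log\cosh$ near zero combined with the smallness of $|B(\beta_1(\sigma))|$ on the typical event $|\beta_1|=O(\omega_n/\sqrt{n_1})$, and it is precisely this cancellation that justifies the hypothesis $|V_1\cup V_2^{(\ge 2)}\setminus U|=o(\sqrt{n_1})$.
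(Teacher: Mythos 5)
Your proof is correct and implements exactly the argument the paper intends (the paper only sketches this lemma by reference to Lemma~\ref{lem:degree01}): $Q^-$ factors into $|V_2^{(\le 1)}|$ identical per-vertex terms of the form $e^A\cosh(B(\beta_1))$ --- the same expression that appears in the paper's computation of $\Pr[d(v)=0\mid\beta_1]$ --- and the conclusion follows from the quadratic behavior of $\log\cosh$ near $0$ together with $|\beta_1|=O(\omega_n n_1^{-1/2})$ whp, $|\beta_1(\eta)-\beta_1(\sigma)|=o(n_1^{-1/2})$, and $p=O((n_1n_2)^{-1/2})$. The only difference is cosmetic: you use a mean-value/$\tanh$ bound where the paper Taylor-expands the $\cosh$ factor, so this is essentially the paper's approach carried out in full detail.
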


The proof is similar to that of Lemma \ref{lem:degree01}, where we use the fact that $U^c$ is small to show that $\beta_1$ is essentially determined on $U$.

With Lemma \ref{lem:qm}, equation (\ref{eq47.3}) becomes
\begin{align*}
&= (1+o(1)) \cdot \frac{ Q_{A,A B}(\sigma_{A  B }, \tau_{A  B }) Q_{A,C}(\sigma_{A   C}, \tau_{A   C} )   Q^-(\hat G, \sigma_{A  B  C}, \tau_{A  B  C})  }{ \sum_{\substack{\eta: \eta_{B  C}=\sigma_{B  C} \\ \phi: \phi_{B  C}= \tau_{B  C}} }  Q_{A,A B}(\eta_{A  B  }, \phi_{A  B  }) Q_{A,C}(\eta_{A    C}, \phi_{A    C} )  Q^-(\hat G, \sigma_{A  B  C}, \tau_{A  B  C}) } \\
&=(1+o(1)) \cdot \frac{ Q_{A,A B}(\sigma_{A  B }, \tau_{A  B }) Q_{A,C}(\sigma_{A   C}, \tau_{A   C} )   }{ \sum_{\substack{\eta: \eta_{B  C}=\sigma_{B  C} \\ \phi: \phi_{B  C}= \tau_{B  C}} }  Q_{A,A B}(\eta_{A  B  }, \phi_{A  B  }) Q_{A,C}(\eta_{A    C}, \phi_{A    C} )  } 
\end{align*}
with probability $1-o(1)$.  Now using (2) from \cite{mossel2012stochastic} gives
\begin{align*}
&= (1+o(1)) \cdot \frac{ Q_{A,A B}(\sigma_{A  B }, \tau_{A  B })    }{ \sum_{\substack{\eta: \eta_{B  C}=\sigma_{B  C} \\ \phi: \phi_{B  C}= \tau_{B  C}} }  Q_{A,A B}(\eta_{A  B  }, \phi_{A  B  })   }
\end{align*}

with probability $1-o(1)$. Now since $Q_{A, AB}$ does not depend on $\sigma_C, \tau_C$, we have
\begin{equation}
\label{eq:47.4}
= (1+o(1)) \cdot \frac{ Q_{A,A B}(\sigma_{A  B }, \tau_{A  B })    }{ \sum_{\substack{\eta: \eta_{B  }=\sigma_{B  } \\ \phi: \phi_{B  }= \tau_{B  }} }  Q_{A,A B}(\eta_{A  B  }, \phi_{A  B  })   }
\end{equation}

Now we can proceed similarly with the RHS of (\ref{eq:47.2}):

\begin{align}
\nonumber
&\frac{ \sum_{\substack{\eta: \eta_{AB  }=\sigma_{AB  } \\ \phi: \phi_{AB  }= \tau_{AB  }} } Q_{A,A B}(\sigma_{A  B  }, \tau_{A  B  }) Q_{A,C}(\eta_{A    C}, \phi_{A    C} ) Q_{B  C, B  C}(\eta_{  B  C}, \phi_{  B  C}) Q^-(\hat G, \eta_{A  B  C}, \phi_{A  B  C})  }{ \sum_{\substack{\eta: \eta_{B  }=\sigma_{B  } \\ \phi: \phi_{B  }= \tau_{B  }} }  Q_{A,A B}(\eta_{A  B  }, \phi_{A  B  }) Q_{A,C}(\eta_{A    C}, \phi_{A    C} ) Q_{B  C, B  C}(\eta_{  B  C}, \phi_{  B  C}) Q^-(\hat G, \eta_{A  B  C}, \phi_{A  B  C}) }  \\
\nonumber
&= \frac{  Q_{A,A B}(\sigma_{A  B  }, \tau_{A  B  })  \sum_{\substack{\eta: \eta_{AB  }=\sigma_{AB  } \\ \phi: \phi_{AB  }= \tau_{AB  }} }Q_{A,C}(\eta_{A    C}, \phi_{A    C} ) Q_{B  C, B  C}(\eta_{  B  C}, \phi_{  B  C}) Q^-(\hat G, \eta_{A  B  C}, \phi_{A  B  C})  }{ \sum_{\substack{\eta: \eta_{B  }=\sigma_{B  } \\ \phi: \phi_{B  }= \tau_{B  }} }  Q_{A,A B}(\eta_{A  B  }, \phi_{A  B  }) Q_{A,C}(\eta_{A    C}, \phi_{A    C} ) Q_{B  C, B  C}(\eta_{  B  C}, \phi_{  B  C}) Q^-(\hat G, \eta_{A  B  C}, \phi_{A  B  C}) } \\
\nonumber
&= (1+o(1)) \cdot \frac{  Q_{A,A B}(\sigma_{A  B  }, \tau_{A  B  })  \sum_{\substack{\eta: \eta_{AB  }=\sigma_{AB  } \\ \phi: \phi_{AB  }= \tau_{AB  }} } Q_{B  C, B  C}(\eta_{  B  C}, \phi_{  B  C}) Q^-(\hat G, \eta_{A  B  C}, \phi_{A  B  C})  }{ \sum_{\substack{\eta: \eta_{B  }=\sigma_{B  } \\ \phi: \phi_{B  }= \tau_{B  }} }  Q_{A,A B}(\eta_{A  B  }, \phi_{A  B  })  Q_{B  C, B  C}(\eta_{  B  C}, \phi_{  B  C}) Q^-(\hat G, \eta_{A  B  C}, \phi_{A  B  C}) } \\
\nonumber
&= (1+o(1)) \cdot \frac{  Q_{A,A B}(\sigma_{A  B  }, \tau_{A  B  })  \sum_{\substack{\eta: \eta_{AB  }=\sigma_{AB  } \\ \phi: \phi_{AB  }= \tau_{AB  }} } Q_{B  C, B  C}(\eta_{  B  C}, \phi_{  B  C}) Q^-(\eta_{A  B  C}, \phi_{A  B  C})  }{ \sum_{\substack{\eta: \eta_{B  }=\sigma_{B  } \\ \phi: \phi_{B  }= \tau_{B  }} }  Q_{A,A B}(\eta_{A  B  }, \phi_{A  B  })  Q_{B  C, B  C}(\eta_{  B  C}, \phi_{  B  C}) Q^-(\sigma_A,\eta_{  B  C}, \tau_A,\phi_{  B  C}) }  \\
\nonumber
&=  (1+o(1)) \cdot \frac{  Q_{A,A B}(\sigma_{A  B  }, \tau_{A  B  })  \sum_{\substack{\eta: \eta_{AB  }=\sigma_{AB  } \\ \phi: \phi_{AB  }= \tau_{AB  }} } Q_{B  C, B  C}(\eta_{  B  C}, \phi_{  B  C}) Q^-(\hat G, \eta_{A  B  C}, \phi_{A  B  C})  }{ \sum_{\substack{\eta: \eta_{B  }=\sigma_{B  } \\ \phi: \phi_{B  }= \tau_{B  }} }  Q_{A,A B}(\eta_{A  B  }, \phi_{A  B  }) \cdot \sum_{\substack{\eta: \eta_{B  }=\sigma_{B  } \\ \phi: \phi_{B  }= \tau_{B  }} }  Q_{B  C, B  C}(\eta_{  B  C}, \phi_{  B  C}) Q^-(\hat G, \sigma_A,\eta_{  B  C}, \tau_A,\phi_{  B  C}) }  \\
\nonumber
&= (1+o(1)) \cdot \frac{  Q_{A,A B}(\sigma_{A  B  }, \tau_{A  B  })  \sum_{\substack{\eta: \eta_{AB  }=\sigma_{AB  } \\ \phi: \phi_{AB  }= \tau_{AB  }} } Q_{B  C, B  C}(\eta_{  B  C}, \phi_{  B  C}) Q^-(\hat G, \eta_{A  B  C}, \phi_{A  B  C})  }{ \sum_{\substack{\eta: \eta_{B  }=\sigma_{B  } \\ \phi: \phi_{B  }= \tau_{B  }} }  Q_{A,A B}(\eta_{A  B  }, \phi_{A  B  }) \cdot \sum_{\substack{\eta: \eta_{AB  }=\sigma_{AB  } \\ \phi: \phi_{AB  }= \tau_{AB  }} }  Q_{B  C, B  C}(\eta_{  B  C}, \phi_{  B  C}) Q^-(\hat G, \eta_{ A B  C}, \phi_{A  B  C}) }  \\
&= (1+o(1)) \cdot \frac{  Q_{A,A B}(\sigma_{A  B  }, \tau_{A  B  })    }{ \sum_{\substack{\eta: \eta_{B  }=\sigma_{B  } \\ \phi: \phi_{B  }= \tau_{B  }} }  Q_{A,A B}(\eta_{A  B  }, \phi_{A  B  }) }
\label{eq:47.6}
\end{align}
where we have again used Lemma \ref{lem:qm} and (2) from \cite{mossel2012stochastic}.  Now (\ref{eq:47.6}) matches (\ref{eq:47.4}) and so 
\[  \Pr [ \sigma_A, \tau_A |  \sigma_{B \cup C}, \tau _{B \cup C}, \hat G] = (1+o(1)) \Pr[\sigma_A, \tau_A |  \sigma_{B}, \tau _{B}, \hat G  ]  \]
with probability $1-o(1)$ over $\sigma, \tau, \hat G$, completing the proof of Lemma \ref{lem:47}.

%The rest of the proof of Lemma \ref{lem:47} is the same as the proof of Lemma 4.7 in \cite{mossel2012stochastic}.
\end{proof}

\section{Proof of Lemma~\ref{lem:degree01}.}
\begin{proof}

Note that conditioned on $\hat G$ and $\beta_1$, the distribution of $G$ is that of independently choosing degree $1$ or $0$ for each $v \in V_2$ of degree less than $ 2$, with probability that   depends only on $\beta_1$. 
Let $v \in V_2$. We can condition on $\beta_1$ and compute 
\begin{align*}
\Pr[d(v)=0 | \beta_1] &= \frac{1}{2} \Pr[d(v)=0 | \beta_1, \tau(v) = +1 ] + \frac{1}{2} \Pr[ d(v)=0 | \beta_1, \tau(v) =-1] \\
&= \frac{(1-\del p)^{n_1/2 }(1- (2-\del p))^{n_1/2}}{2} \left[ \left( \frac{1-\del p}{1 - (2-\del)p}\right) ^{\beta_1 n_1/2}+\left( \frac{1-(2-\del) p}{1 - \del p}\right) ^{\beta_1 n_1/2}   \right ] \\
&= (1-\del p)^{n_1/2 }(1- (2-\del p))^{n_1/2} (1+ \beta_1^2 n_1^2 (\del-1)^2 p^2/2 + O( \beta_1^4 n_1^4 p^4 )).
\end{align*}

Similarly,
%\begin{align*}
%\Pr[d(v)=1 | \beta_1] &= \frac{1}{2} \Pr[d(v)=1 | \beta_1, \tau(v) = +1 ] + \frac{1}{2} \Pr[ d(v)=1 | \beta_1, \tau(v) =-1] \\
%&= (1 - \del p)^{n_1/2} (1- (2-\del p))^{n_1/2} n_1 p \frac{ \frac{\del p}{1 - \del p} + \frac{(2-\del)p}{1- (2-\del)p} }{ 2 } \cdot (1+ \beta_1^2 n_1^2 (\del-1)^2 p^2/2 + O( \beta_1^4 n_1^4 p^4 ))
%\end{align*}

\begin{align*}
&\Pr[d(v)=1 | \beta_1] = \frac{1}{2} \Pr[d(v)=1 | \beta_1, \tau(v) = +1 ] + \frac{1}{2} \Pr[ d(v)=1 | \beta_1, \tau(v) =-1] \\
&=\frac{1}{2} (1-\del p)^{n_1/2 }(1- (2-\del p))^{n_1/2} \left( \frac{1-\del p}{1 - (2-\del)p}\right) ^{\beta_1 n_1/2} \left( \frac{n_1(1+\beta_1)}{2} \frac{ \del p }{ 1- \del p  } + \frac{n_1(1-\beta_1)}{2}  \frac{(2-\del) p}{1 - (2-\del) p}   \right ) \\
&+\frac{1}{2} (1-\del p)^{n_1/2 }(1- (2-\del p))^{n_1/2} \left( \frac{1-\del p}{1 - (2-\del)p}\right) ^{-\beta_1 n_1/2} \left( \frac{n_1(1+\beta_1)}{2}  \frac{(2- \del) p }{ 1- (2-\del) p  } + \frac{n_1(1-\beta_1)}{2}  \frac{\del p}{1 - \del p}   \right ) \\
&= n_1 p/2 (1-\del p)^{n_1/2}  (1- (2-\del p))^{n_1/2} \left( \frac{1-\del p}{1 - (2-\del)p}\right) ^{\beta_1 n_1/2}\left(c(p) + \left(\frac{1-\del p}{1-(2-\del)p}\right)^{-\beta_1 n_1} d(p) \right)\\
&= \frac{n_1 p}{2} (1-\del p)^{n_1} \left[-c(p)\left(1+ \frac{\beta_1^2 n_1^2 (\del-1)^2 p^2}{2} + O( \beta_1^4 n_1^4 p^4 )\right) + (c(p) - d(p)) \left( \frac{1-\del p}{1 - (2-\del)p}\right) ^{-\beta_1 n_1/2} \right]
%&= O(n_1 p ) (1+ \beta_1^2 n_1^2 (\del-1)^2 p^2/2 + O( \beta_1^4 n_1^4 p^4 ))
%\frac{1}{2} (1-\del p)^{n_1/2 }(1- (2-\del p))^{n_1/2} \left( \frac{1-\del p}{1 - (2-\del)p}\right) ^{\beta_1 n_1/2} \left( n_1p +\beta_1 n_1 (\del -1) p + n_1 p^2 \frac{\del^2 + (2-\del)^2}{2} + \beta_1 n_1 p^2 \frac{\del^2 - (2-\del)^2}{2}   \right ) \\
%+ \dots 
%&= n_1 p  (1-\del p)^{n_1/2 }(1- (2-\del p))^{n_1/2} \left( \frac{1-\del p}{1 - (2-\del)p}\right) ^{\beta_1 n_1/2} \left( 1+ \frac{\beta_1^2 n_1^2 (\del-1)^2 p^2}{2}  \right )
\end{align*}
where $c(p) = \frac{(1+\beta_1)\del }{(1-\del p)} + \frac{(1-\beta_1)(2-\del)}{(1-(2-\del p))}$ and $d(p) = \frac{(1+\beta_1)(2-\del)}{(1-(2-\del)p)} + \frac{(1-\beta_1)\del}{(1-\del p)}$.
\\

Then we have 
\begin{align*}
\Pr[d(v)=0| d(v) \le 1, \beta_1] &= \frac{\Pr[d(v)=0 | \beta_1]   }{\Pr[d(v)=0 | \beta_1]+\Pr[d(v)=1 | \beta_1]     }  \\
&= \frac{1+\beta_1^2 n_1^2 (\del-1)^2 p^2/2 + O(\beta_1^4 p^4 n_1^p)}{O(n_1 p) \left[c(p)\left(1+ \frac{\beta_1^2 n_1^2 (\del-1)^2 p^2}{2} + O( \beta_1^4 n_1^4 p^4 )\right) + (d(p) - c(p)) \left( \frac{1-\del p}{1 - (2-\del)p}\right) ^{-\beta_1 n_1/2} \right]}.
%&= \frac{}{}
\end{align*}
and
\begin{align*}
\Pr[d(v)=1| d(v) \le 1, \beta_1] &= \frac{\Pr[d(v)=1 | \beta_1]   }{\Pr[d(v)=0 | \beta_1]+\Pr[d(v)=1 | \beta_1]     }  \\
&= \frac{\frac{n_1 p}{2} \left[c(p)\left(1+ \frac{\beta_1^2 n_1^2 (\del-1)^2 p^2}{2} + O( \beta_1^4 n_1^4 p^4 )\right) + (d(p) - c(p)) \left( \frac{1-\del p}{1 - (2-\del)p}\right) ^{-\beta_1 n_1/2} \right] }{O(n_1 p) \left[c(p)\left(1+ \frac{\beta_1^2 n_1^2 (\del-1)^2 p^2}{2} + O( \beta_1^4 n_1^4 p^4 )\right) + (d(p) - c(p)) \left( \frac{1-\del p}{1 - (2-\del)p}\right) ^{-\beta_1 n_1/2} \right]}.
\end{align*}

\begin{align*} \text{Now}
\Pr [G | \hat G, \beta_1] &= \Pr[d(v)=0| d(v) \le 1, \beta_1]^{|V_2^{(0)}| } \cdot \Pr[d(v)=1| d(v) \le 1, \beta_1]^{|V_2^{(1)}| }.
\end{align*}
\noindent
Given $\sigma$, $\beta_1$ is determined, and whp over the choice of $\sigma$, $\beta_1^4 \le n_1^{-9/5}$.  Similarly, whp over the choice of $\sigma_U$, the conditional expectation of $\beta_1^4$ is $\le n_1^{-9/5}$.   %Whp over the choice of $G$, we have $|V_2^{(1)}| = n_2 n_1 p + O( n_1^{3/5} n_2^{1/2} p^{1/2})$ and $|V_2^{(0)}| = n_2 (1 - n_1 p) + O(n_1^{3/5} n_2^{1/2} p^{1/2}   )$.  
All together this gives that whp, 
%\begin{align*}
% \frac{ \Pr[ G | \sigma, \hat G]  } { \Pr[ G | \sigma_U, \hat G]  } &= (1 + O(n_1^{-4/5} n_1^2 p^2)) ^{ O( n_1^{3/5} n_2^{1/2} p^{1/2} )  } \\
% &=1 + O(n_1^{11/20} n_2^{-3/4}   ) \\
% &= 1 + o(1).
% \end{align*}
\begin{align*}
\frac{ \Pr[ G | \sigma, \hat G]  } { \Pr[ G | \sigma_U, \hat G]  } &= (1 + O(\beta_1^4 n_1^4 p^4)) ^{ O(n_2 )  } \\
 &=( 1 + O(n_1^{-9/5} n_1^4 n_1^{-2} n_2^{-2})) ^{ O(n_2 )  } \\
 &= 1+ O( n_1^{1/5} n_2^{-1}) = 1+ o(1)
\end{align*}

This proves Lemma \ref{lem:degree01}. 
\end{proof}

%This is a complete version of a proof sketched in the main text.

\section{Proof of Lemma \ref{lem:norms}}
\label{sec:proveBBound}

We use another auxiliary lemma, a high probability bound on the norm of a random matrix with mean $0$ independent entries. Such a lemma is proved for Bernoulli random entries in \cite{vu2014simple, crv}, here we extend it to Poisson entries.

\begin{lemma}
\label{lem:binomialNorm}
Let $E$ be an $n \times n$ symmetric random matrix with zeros on the diagonal and independent entries $e_{ij}$ above the diagonal which take the values $X_{ij}- \lam_{ij}$ where each $X_{ij}$ is a Poisson random variable with mean $\lam_{ij}$.   Then there is a constant $C>0$, so that if $\sigma^2 := \max_{ij} \lam_{ij}$, and $\sigma^2 \ge C \log n /n$, 
\[ \Pr[ \| E \| > C \cdot T \cdot \sigma \sqrt n ] \le n^{-T} \]
for any $T \ge 1$. 
\end{lemma}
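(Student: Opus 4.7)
The strategy is to truncate the Poisson tails at a logarithmic scale so that the problem reduces to the bounded-entry version of this norm bound proved in \cite{vu2014simple, crv}. We may assume $\sigma^2 = O(1)$; the large-$\sigma^2$ regime is handled by a straightforward variant with $K$ replaced by $C(\sigma^2 + \log n)$ throughout.

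Set $K := C_0 \log n$ for a large absolute constant $C_0$, and put $Y_{ij} := X_{ij} \wedge K$. By the Chernoff estimate $\Pr(\pois(\lambda) \ge K) \le (e\lambda / K)^K$, which for $\lambda \le \sigma^2 = O(1)$ is at most $n^{-T-3}$ once $C_0$ is sufficiently large, a union bound over the $\binom{n}{2}$ entries gives $\Pr(\exists\,(i,j):\,Y_{ij} \ne X_{ij}) \le n^{-T-1}$. The same tail bound shows $|\mathbb{E} X_{ij} - \mathbb{E} Y_{ij}| \le n^{-T-3}$, so the Frobenius (hence spectral) norm of $\mathbb{E} X - \mathbb{E} Y$ is at most $n^{-T-1}$, negligible compared with $T\sigma\sqrt n$. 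Let $\tilde E$ be the symmetric matrix with zero diagonal and off-diagonal entries $Y_{ij} - \mathbb{E} Y_{ij}$; these entries are independent, centered, bounded by $K$, and have variance at most $\sigma^2$. Invoking the bounded-entry trace/moment method of \cite{vu2014simple, crv} gives
\[ \Pr\bigl(\|\tilde E\| > C_1 T \sigma \sqrt n \bigr) \le n^{-T} \]
under the hypothesis $\sigma^2 n \ge C \log n$. Combining this with the truncation estimate via the triangle inequality completes the proof.

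\paragraph{Main obstacle.}
The only substantive technical point is extending the walk-counting bound of \cite{vu2014simple, crv} to allow entries bounded by $K = \Theta(\log n)$ rather than by $1$. Expanding $\mathbb{E}\operatorname{tr}(\tilde E^{2k})$ for $k = \Theta(T\log n)$ over closed walks of length $2k$, any edge used $m \ge 2$ times contributes at most $K^{m-2}\sigma^2$; the dominant contribution comes from double-covered trees and totals $(Cn\sigma^2)^k$, while high-multiplicity walks pick up extra factors of $K$ that must be absorbed. These $K$-factors are combinatorially suppressed precisely when $\sigma^2 n \ge C \log n$, which is why the hypothesis appears exactly in this form. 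This bookkeeping is routine but is where all of the real work is concentrated.
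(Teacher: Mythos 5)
There is a genuine gap at the step where you pass from the truncated matrix to the norm bound. You assert that because the entries $Y_{ij}-\E Y_{ij}$ are independent, centered, bounded by $K=\Theta(\log n)$ and have variance at most $\sigma^2$, the ``bounded-entry trace/moment method of Vu/CRV'' yields $\Pr(\|\tilde E\|>C_1T\sigma\sqrt n)\le n^{-T}$ whenever $\sigma^2 n\ge C\log n$. No such general statement is available, and in fact it is false for this class of matrices: take independent entries equal to $\pm K$ with probability $\sigma^2/(2K^2)$ each and $0$ otherwise, with $K=\log n$ and $\sigma^2=\log n/n$; the variance hypothesis holds, yet whp many entries equal $K$ in absolute value, so $\|\tilde E\|\ge \log n \gg T\sigma\sqrt n = T\sqrt{\log n}$ for fixed $T$. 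The same obstruction shows up inside your own bookkeeping: bounding an edge of multiplicity $m$ by $\sigma^2K^{m-2}$, a single edge traversed $2k$ times contributes about $n^2\sigma^2K^{2k-2}\approx n^2(\log n)^{2k-2}\cdot\sigma^2$ to $\E\operatorname{tr}(\tilde E^{2k})$, which for $k=\Theta(T\log n)$ swamps the target $(CT\sigma\sqrt n)^{2k}=(C^2T^2\log n)^{k}$ in the sparse regime $\sigma^2 n=\Theta(\log n)$. So the $K$-factors are not ``combinatorially suppressed'' under the stated hypothesis; to win you must exploit the Poisson structure itself (for $\lam_{ij}=O(\log n/n)$ every moment $\E|X_{ij}-\lam_{ij}|^m$ is essentially $\lam_{ij}$, i.e.\ the effective entry bound is $O(T)$, not $\log n$), and even then the regime $\sigma^2 n=\Theta(\log n)$ is exactly where the plain Füredi--Komlós trace method is known to be insufficient. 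The results you cite from \cite{vu2014simple} (Lemma 3.4) are stated for Bernoulli entries and are proved by a Kahn--Szemer\'edi/Feige--Ofek discrepancy argument, not by a walk count that one can perturb by ``routine bookkeeping''; what you have labeled the main obstacle is in fact the whole content of the lemma in the sparse regime, and the sketch as written does not surmount it.

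For comparison, the paper's proof sidesteps this entirely by truncating at height $1$ rather than at $\log n$: it writes $X_{ij}=\hat X_{ij}+\tilde X_{ij}$ with $\hat X_{ij}=\min\{X_{ij},1\}$, so the matrix built from $\hat X$ has genuine Bernoulli entries and the cited sparse Bernoulli norm bound applies verbatim, while the leftover matrix built from $\tilde X$ has entrywise means $O(\sigma^4)$ and is controlled by a crude row-sum/Chernoff bound; the dense regime $\sigma^2\ge \log^4 n/\sqrt n$ is handled separately by Vu's spectral norm theorem. If you want to salvage your route, you would need to (i) truncate at a level adapted to $\lam_{ij}$ (of order $T$ in the sparse case) so that the truncated entries are genuinely small, and (ii) supply a sparse-regime norm bound for such matrices, which amounts to redoing a Feige--Ofek-type argument rather than citing a bounded-entry moment bound.
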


\begin{proof}
If $\sigma^2 \ge \log ^4 n / \sqrt n $, then we can apply Theorem 1.5 from \cite{vu2005spectral} (the failure probability can be made as small as $n^{-T}$ with the additional factor $T$ in the bound).  

 For $\sigma ^2 \le \log ^4 n / \sqrt n$, we truncate each $X_{ij}$ by writing
\[ X_{ij} = \hat X_{ij} + \tilde X_{ij}  \]
where $\hat X_{ij} = \min  \{ X_{ij}, 1\}$ and $\tilde X_{ij} = X_{ij} - \hat X_{ij}$.  We then define two matrices $\hat E$ and $\tilde E$  with $\hat E_{ij} = \hat X_{ij} - \E \hat X_{ij}$ and $\tilde E_{ij} = \tilde X_{ij} - \E \tilde X_{ij}$.  Thus $E = \hat E + \tilde E$, and so we will bound $\| E \| \le \| \hat E \| + \| \tilde E \|$. 

  Note that $\E \hat X_{ij} = \E X_{ij} (1+o(1))$, and each $\hat X_{ij}$ is a Bernoulli random variable, and so we can apply Lemma 3.4 from \cite{vu2014simple} to get $\| \hat E \| \le  C T \sigma \sqrt {n}$ with probability $\ge 1 -n^{-T}$ (again inspecting the details of the proof in \cite{vu2014simple} gives a failure probability of $n^{-T}$ at the expense of the extra factor $T$ in the bound).  

To bound $\| \tilde E \|$, consider one row sum, $ \left| \sum_{j} \tilde X_{ij} - \E \tilde X_{ij} \right |$. For $j=1 \dots n$, the random variables $ \tilde X_{ij} - \E \tilde X_{ij}$ are independent, mean $0$ random variable with variance $O(\sigma^4)$ with Poisson tails, and so a Chernoff bound gives
\begin{align*}
\Pr \left[  \left| \sum_{j} \tilde X_{ij} - \E \tilde X_{ij} \right | > CT \sigma \sqrt{n}  \right ]  &\le \exp  \left( - \frac{C' T^2}{\sigma^2} \right ) \le \exp \left( -C' T^2 \frac{\sqrt n}{\log^4 n} \right),
\end{align*}
and so with probability at least $1- n^{-T}$ all row sums (and thus $\| \tilde E \|$) are at most $CT \sigma \sqrt{n} $.
% Each $\tilde X_{ij}$ is stochastically dominated by a Poisson of mean $ \lam_{ij}$, so for each $i$, $\sum_j \tilde X_{ij}$ is stochastically dominated by a Poisson of mean $ n \sigma^2$.  And with probability at least $1- n^{-T}$, for $ \log n /n \le \sigma^2 \le \log ^4 n / \sqrt n $, the maximum of $n$ Poissons  of mean $n \sigma^2$ is at most $C T \sigma \sqrt n$, which gives $\| \tilde  E \| \le C T \sigma \sqrt n$.  Adding the two bounds together proves the lemma. 

\end{proof}

\noindent
With this we prove Lemma \ref{lem:norms}.

\begin{proof}[Proof of Lemma \ref{lem:norms}]

First, note that under the conditions of Theorem~\ref{thm:main}, $n_2 \ge n_1 \log ^4 n_1$, so $n_1 p < 1/
\log n_1$.  (In fact, cases in which the density is much higher than this can be dealt with by the standard method of bounding $\| M - \E M \|$). 

(1): We can compute $(\E B)_{ij}$: this is the expected number of paths of length $2$ from $i$ to $j$ in $G$.  Say $\sigma(i) = \sigma(j)$, $i \ne j$. Then,
\begin{align*}
(\E B)_{ij} &= \frac{n_2}{2} \del^2 p^2 + \frac{n_2}{2} (2-\del)^2 p^2 = n_2 p^2 (\del^2 - 2\del  +2).
\end{align*}
If $\sigma(i) \ne \sigma(j)$, then
\begin{align*}
(\E B)_{ij} &= n_2 p^2 (2\del -\del^2).
\end{align*}

The diagonal entries of $\E B$ are  $0$ by construction. %$n_2 p^2 (\del^2 - 2\del  +2)$ by construction. 
So $\E B $ is a rank $2$ matrix, $\E B = \lam_1 J/n_1 + \lam_2 \sigma \sigma^T /n_1$, with $\lam_1 = n_1 n_2 p^2$ and $\lam_2 = (\del-1)^2 n_1 n_2 p^2$.  
%\\
%This is part 1 of Lemma \ref{lem:norms}.

%(1): Taking the expectation of $B$, we have 
%$$\E(B) = \E((MM^T) - \text{diag}(MM^T)) = n_2 p^2 J + (\del -1)^2 n_2 p^2 \sigma \sigma^T$$

(2): The matrix $B - \E B$ is symmetric with mean zero entries, but the entries are not quite independent, and so we cannot directly apply a bound like Lemma \ref{lem:binomialNorm}. 
Instead, we will first decompose the matrix into the sum of a sequence of adjacency matrices of subgraphs induced by vertices of a given degree in $V_2$.  We will couple each matrix in the sum to a matrix with independent entries, apply Lemma \ref{lem:binomialNorm} to each, then take the sum of these bounds as our upper bound on $\| B - \E B \|$.

We  decompose the graph $G$ by sorting the vertices of $V_2$ by degree.   Let $V_2^{(i)}$, $i=1, 2 \dots$ be the set of vertices in $V_2$ of degree $i$. 

Let $M_i$ be the adjacency matrix of $G$ induced by $V_2^{(i)}.$ The main idea of the decomposition is that $M_1$ does not contribute to $B$, as its edges only appear on the diagonal of $M M^T$, and that nearly all of the remaining edges in the graph are in $M_2$.   We write 
\[M = M_1 + M_2 + M_3 +\dots \]
and 
\[ MM^T = M_1 M_1^T + M_2 M_2^T + M_3 M_3^T + \dots \]
The cross terms disappear since the matrices are supported on disjoint sets of columns.

Recall $B = M M^T - \text{diag}(M M^T)$, and $B_i = M_i M_i^T - \text{diag}(M_iM_i^T)$.  We have
\[ B= B_2 + B_3 + \dots \]
since $M_1 M_1 ^T$ is a diagonal matrix.

 A vertex $v \in V_2 ^{(i)}$ has exactly $i$ neighbors, and its contribution to $B_i$ is $1$ in each entry $(u,w)$ where $u \ne w$ are neighbors of $v$. Call $M_v$ the adjacency matrix induced by $v$, and $B_v = M_v M_v^T -\text{diag}M_v M_v^T$.  We have $B_i = \displaystyle\sum_{v \in V_2^{(i)}} B_v$.   $B_v$ has $\binom i 2$ $1$'s above the diagonal.  Now for each $i \ge 3$, we randomly split each $B_v$, $v \in V_2^{(i)}$, into $\binom i 2$ symmetric matrices $B_v^{(1)}, \dots , B_v^{\binom i 2}$ by randomly assigning each of the $1$'s above the diagonal to a unique matrix, along with the symmetric $1$ below the diagonal.  Then we combine the matrices as 
 \begin{align*}
B_i^{(j)}  &= \sum _{v \in V_2^{(i)}} B_v^{(j)}.
\end{align*}

Each $B_i$ is the adjacency matrix of a random graph formed by adding a given number of random $i$-cliques to an empty graph.  Clearly there are correlations between edges in such a graph, and so the purpose of this decomposition is to split the graph into $\binom i 2$ graphs, $G_i^{(j)}$, $j=1 \dots \binom i 2$, with independent edges. This gives a sequence of adjacency matrices $B_i^{(1)}, \dots, B_i^{(\binom i 2 )}$ with identical distributions, but not independent across the different matrices.

All together, we write
\[
B = B_2 + \sum_{i =3}^\infty \sum_{j=1}^{\binom i 2} B_i ^{(j)}
\]

and 

\begin{equation}
\label{eq:Bdecomp}
B = \E B  + (B_2 - \E B_2) + \sum_{i =3}^\infty \sum_{j=1}^{\binom i 2} \left( B_i ^{(j)} - \E B_i ^{(j)} \right)
\end{equation}
where 
\[ \E B = \E B_2 + \sum _{i =3}^ \infty \sum_{j=1}^{\binom {i}{2}} \E B_i^{(j)}. \]

What remains is to bound the norms of the mean zero random matrices in the decomposition above: $\| B_2 - \E B_2 \|$, and the $\|  B_i ^{(j)} - \E B_i ^{(j)} \|$'s.

Let $L_+^{(i)}, L_-^{(i)}$ be the number of vertices of $V_2$ with degree $i$ and label $+$ or $-$ respectively.  As a first step, we calculate the expectations of these random variables:
%\[ \E L_+^{(i)}, L_-^{(i)} \le \frac{ n_2 n_1^i (2p)^i}{i!}.  \]

%We can compute 
\begin{align}
\label{eq:Lpbound}
\E | L_+^{(i) }| = \E | L_-^{(i) }| &= \frac{n_2}{2} \Pr [ d(v \in V_2) = i ] \\
\nonumber
 &=\frac{n_2}{2}  \Pr[ \text{Bin}(n_1/2, \del p) + \text{Bin}(n_1/2, (2-\del)p) = i ] \\
 \nonumber
 &= \frac{n_2}{2} \Pr[ \text{Poisson}(n_1 p) =i] (1+ o(1)) \\
 \nonumber
 &=\frac{n_2}{2}  \frac{ e^{-n_1 p} (n_1 p)^i  }{i! } (1+ o(1)) \\
 \nonumber
 &=  \frac{ n_2 (n_1 p)^i  }{2 \cdot i! } (1+ o(1))
 %&\le \frac{ n_2 n_1 ^2 p^2}{i!}
\end{align}
where we use the total variation distance bound on a Poisson approximation of a binomial, along with our assumption $n_1 p =o( 1)$. Let $i_0$ be the smallest $i$ so that $\frac{\E |L_+^{(i) }|}{n_1} \le \log n_1$.  
Considering the sum $B_{\ge i_0} =\displaystyle\sum _{i =i_0}^ \infty \sum_{j=1}^{\binom i 2}  B_i ^{(j)}$, we see that the expected row sums of $B_{\ge i_0}$ are bounded by $O(\log n_1)$, and so from a Chernoff bound, with probability at least $1- n_1^{-2}$, all row sums are $O( \log n_1)$.  This gives 
\begin{equation}
\label{eq:boundhighi}
 \sum _{i =i_0}^ \infty \sum_{j=1}^{\binom i 2} \|  B_i ^{(j)} - \E B_i ^{(j)} \| \le \sum _{i =i_0}^ \infty \sum_{j=1}^{\binom i 2} \|  B_i ^{(j)} \|  + \| \E B_i ^{(j)} \|
 \le C \log n_1 \le C n_1^{1/2} n_2^{1/2} p.
 \end{equation}
 with probability $\ge 1- n_1^{-2}$.

Now consider $i < i_0$. Let $N_i^{(j)}(++), N_i^{(j)}(--), N_i^{(j)}(+-)$ be the number of edges between vertices with the respective labels in the graph $G_i^{(j)}$ corresponding to $B_i^{(j)}$. Conditioned on $N_i^{(j)}(++), N_i^{(j)}(--), N_i^{(j)}(+-)$ the edges are distributed uniformly (with replacement) in the respective categories.  Alternatively, consider the adjacency matrices $\tilde B_2, \tilde B_i^{(j)}$, where each edge $(u,v)$ appears with multiplicity according to an independent Poisson random variable of mean $(\E B_i^{(j)})_{uv}$ (one of two values depending on whether $u$ and $v$ have the same or opposite label). Again in this setting, conditioned on the number of edges of each type, $\tilde N_i^{(j)}(++),  \tilde N_i^{(j)}(--), \tilde N_i^{(j)}(+-)$, the edges are distributed uniformly with replacement in the respective categories.

Since  $\E \tilde B_2 = \E B_2 $ and $\E \tilde B_i^{(j)}=\E  B_i^{(j)}$ by construction, we can apply Lemma \ref{lem:binomialNorm} and choose $C$ large enough to get
 \begin{equation}
 \label{b2ineq}
  \| \tilde B_2 - \E B_2 \| \le C n_1^{1/2} \sqrt{4 \E | L_+^{(2)}   |/n_1^2  } 
 \end{equation}
with probability at least $1- n_1^{-8}$, as each entry has variance bounded by $4 \E | V_2^{(2)}   |/n_1^2$, 
and similarly
\begin{equation}
\label{biineq}
 \| \tilde B_i^{(j)} - \E B_i^{(j)}\| \le C \cdot i \cdot n_1^{1/2} \sqrt{ \frac{ 4 \E | L_+^{(i) }|  }{\binom {i}{2} \cdot n_1^2 } } 
 \end{equation}
with probability at least $1 - n_1^{-6 -i}$.  Note that from (\ref{eq:Lpbound}), the means $\E | L_+^{(i)} |$ decrease with $i$ faster than $1/ i!$ and so summing $(\ref{biineq})$ over $3 \le i \le i_0$, and all $j$, gives a bound of $C n_1^{1/2} \sqrt{ \E | L_+^{(3)}|} \le C n_1^{1/2} n_2^{1/2} p$.

To transfer these bounds, we couple the Poisson matrices with  $B_2$ and the $B_i^{(j)}$'s.  If the means of $N_i^{(j)}(++), N_i^{(j)}(--), N_i^{(j)}(+-)$ are small enough, we can couple the matrices to be equal whp.  If the means are large, we couple so that $\| \tilde B_i^{(j)} - B_i^{(j)}\|$ is small.  Take $N_i^{(j)}(++)$.  Its distribution is a $\text{Bin}(n_2,q)$ for $q$ that depends on $n_1, p, i,$ and $\del$.  The corresponding random variable, $\tilde N_i^{(j)}(++)$ is a $\pois(n_2 q)$. Say $q = o(n_2^{1/2})$.  In this case the total variation distance between the two is $O(n_2 q^2)$ and so we can couple the corresponding matrices to be equal whp. $q$ is decreasing like $1/i!$, and so we can sum the deviation probabilities over all $i$ and $j$.  When $q = \Omega(n_2^{-1/2})$, we 
write $N_i^{(j)}(++)$ as the sum of $n_2$ independent $\text{Ber}(q)$
random variables and $\tilde N_i^{(j)}(++)$ as the sum of $n_2$ independent $\pois(q)$ random variables, and term by term in each sum couple by an optimal coupling with respect to total variation distance.  Then the difference $ N_i^{(j)}(++) - \tilde N_i^{(j)}(++)$ is the sum of $n_2$ mean $0$ random variables of variance $O(q^2)$, and so whp the difference is bounded by $O(q n_2^{2/3})$. We can couple the matrices so  that their difference has non-zero entries distributed uniformly in entries corresponding to $+, +$ labels.  Then, as above, a Chernoff bound shows that the row sums (and thus the matrix norm) of the difference matrix are all bounded by $O(q n_1^{-1} n_2^{2/3} \log n_1)$.  Since $q < n_1^2 p^2$, this gives a bound on the norm of $O( n_1 n_2^{2/3} p^2) = o(n_1^{1/2} n_2^{1/2} p  )$.
%\end{comment}

All together the bound (\ref{eq:boundhighi}) and the transferred bounds give

\begin{equation*}
\| B -\E B \| \le C n_1^{1/2} n_2^{1/2} p
\end{equation*}
with probability at least $1- n_1^{-1}$, which completes the proof of part 2 of Lemma \ref{lem:norms}.
\\

Parts 3 and 4 of Lemma \ref{lem:norms} follow from the observation that the $i$th diagonal entries of $D_V$ are the degrees of the $i$th vertex of $V_1$.  Since the degrees of vertices of $V_1$ have identical distributions, the expectation matrices are multiples of the identity.  
For part 4 we use Chernoff bounds. We have $ \| D_V - \E D_V \| = \displaystyle\text{max}_i |(D_V)_{ii} - n_2p| \le C\sqrt{n_2 p \log n_1}$ whp.
\end{proof}

\end{document}